\numberwithin{equation}{section}
\newcounter{mnote}
\let\oldmarginpar\marginpar
\renewcommand\marginpar[1]{\-\oldmarginpar[\raggedleft\footnotesize #1]%
	{\raggedright\footnotesize #1}}
\newtheorem{theorem}{Theorem}[section]
\newtheorem{lemma}[theorem]{Lemma}
\newtheorem{example}[theorem]{Example}
\newtheorem{remark}[theorem]{Remark}
\newcommand{\normmm}[1]{{\left\vert\kern-0.25ex\left\vert\kern-0.25ex\left\vert #1
		\right\vert\kern-0.25ex\right\vert\kern-0.25ex\right\vert}}
\renewcommand{\div}{\operatorname{div}}
\newcommand{\grad}{\operatorname{grad}}
\newcommand{\sym}{\operatorname{sym}}
\newcommand{\skw}{\operatorname{skw}}
\begin{document}

\title[Mixed Methods for Fourth-Order Singular Problem]{Robust And Optimal Mixed Methods For A Fourth-Order Elliptic Singular Perturbation Problem}
\author{Xuehai Huang}%
\address{School of Mathematics, Shanghai University of Finance and Economics, Shanghai 200433, China}%
\email{huang.xuehai@sufe.edu.cn}%
\author{Zheqian Tang}%
\address{School of Mathematics, Shanghai University of Finance and Economics, Shanghai 200433, China}%
\email{tangzq0329@163.com}%

\thanks{
The first author was supported by the National Natural Science Foundation of China (Grant No.\ 12171300). 
}
\keywords{fourth-order elliptic singular problem; mixed finite element method; optimal error estimates; robustness; finite element for tensors}
\makeatletter
\@namedef{subjclassname@2020}{\textup{2020} Mathematics Subject Classification}
\makeatother
\subjclass[2020]{
65N30;   
65N12;   
65N22;   
}

\begin{abstract}
A series of robust and optimal mixed methods based on two mixed formulations of the fourth-order elliptic singular perturbation problem are developed in this paper. 
First, a mixed method based on a second-order system is proposed without relying on Nitsche's technique or interpolations. Robust and optimal error estimates are derived using an $L^2$-bounded interpolation operator for tensors. Then, its connections to other discrete methods, including weak Galerkin methods and a mixed finite element method based on a first-order system, are established. Finally, numerical experiments are provided to validate the theoretical results.
\end{abstract}
\maketitle

\section{Introduction}
In this paper, we will propose a series of mixed methods for the fourth-order elliptic singular perturbation problem with the right-hand side $f\in L^2(\Omega)$:
\begin{equation}\label{FSP0}
\begin{cases}
\varepsilon^2\Delta^2u-\Delta u=f &\mbox{in} \ \Omega,\\
u=\partial_{n}u=0 &\mbox{on} \ \partial\Omega,
\end{cases}
\end{equation}
where $\Omega\subset\mathbb{R}^d(d\geq2)$ is a bounded polytope, 
$\partial_{n}u$ is the normal derivative of $u$, and $\varepsilon$ is a real small and positive parameter.

$H^2$-conforming elements are well-suited for discretizing both the fourth-order and second-order operators in problem \eqref{FSP0} simultaneously \cite{Semper1992,HuLinWu2024,ChenHuang2021,ChenHuang2024a,ChenChenGaoHuangEtAl2025}. However, due to the complexity of these elements, $H^2$-nonconforming elements are more widely used. Several $H^2$-nonconforming finite element methods (FEM) have been proposed in \cite{NTW2001,ChenZhaoShi2005,WangXuHu2006,WangMeng2007,ChenLiuQiao2010,XieShiLi2010,Guzman2012,WangWuXie2013,WangHuangTangZhou2018,HuangShiWang2021} to solve problem \eqref{FSP0}. In addition, a $C^0$ interior penalty discontinuous Galerkin (IPDG) method using the Lagrange element space was introduced in \cite{BrennerNeilan2011,FranzRoosWachtel2014} as an alternative approach. More recently, several virtual element methods (VEM) in \cite{ZhangZhao2020,ZhangZhao2024,FengYu2024} and a hybrid high-order method in \cite{DongErn2021} have been developed, both specifically tailored for problem \eqref{FSP0}. 

As $\varepsilon\rightarrow0$, the problem \eqref{FSP0} reduces to the Poisson equation \eqref{poisson}. While the boundary condition $\partial_{n}u=0$ may over-constrain the reduced problem, it induces boundary layer phenomena. Most of the aforementioned discrete methods are designed for the primal formulation of problem \eqref{FSP0}. However, due to the presence of boundary layers, the error estimates of these methods are typically uniform and sharp but not optimal, with a convergence rate of only half-order as $\varepsilon\rightarrow0$. 

To design robust and optimal discrete methods for the fourth-order elliptic singular perturbation problem \eqref{FSP0} in the presence of boundary layers, a key requirement is that, when $\varepsilon=0$, the scheme reduces to a standard discretization of the Poisson equation. In particular, for $\varepsilon=0$, the discrete space, bilinear form, and right-hand side should incorporate only the boundary condition $u=0$, without involving the condition $\partial_n u=0$.

One approach is to impose the boundary condition $\partial_n u=0$ weakly, for example by using Nitsche's method~\cite{Nitsche1971} or penalty techniques~\cite{ArnoldIP1982}, as in the discrete methods of~\cite{Guzman2012,HuangShiWang2021}. Another approach is to introduce an interpolation from an $H^2$ finite element space to an $H^1$ finite element space in both the right-hand side and the bilinear form associated with the Laplacian operator, as in the decoupled method of~\cite{CuiHuang2025}.

In this paper, we focus instead on more natural mixed methods that do not rely on additional stabilizations or interpolations. The mixed Hellan--Herrmann--Johnson method~\cite{Hellan1967,Herrmann1967,Johnson1973} was employed in~\cite{LiuHuangWang2020} for problem~\eqref{FSP0} in two dimensions, but without robust analysis. We first reformulate problem~\eqref{FSP0} as a second-order system
\begin{equation*}
\begin{cases}
\varepsilon^{-2}\boldsymbol{\sigma} = \nabla^2 u &\mbox{in} \ \Omega,\\
\div\div\boldsymbol{\sigma} - \Delta u = f &\mbox{in} \ \Omega,\\
u=\partial_{n}u=0 &\mbox{on} \ \partial\Omega.
\end{cases}
\end{equation*}
A distributional mixed formulation of this second-order system is to find $(\boldsymbol{\sigma},u)\in H^{-1}(\div\div,\Omega; \mathbb{M})\times H^1_0(\Omega)$ such that 
\begin{subequations}\label{intro-mix}
\begin{align}\label{intro-mix1}
\varepsilon^{-2}(\boldsymbol{\sigma},\boldsymbol{\tau})-\langle\div\div\boldsymbol{\tau}, u\rangle &= 0 \quad\quad\quad\;\; \forall\, \boldsymbol{\tau}\in  H^{-1}(\div\div,\Omega; \mathbb{M}), \\
\label{intro-mix2}
-\langle\div\div\boldsymbol{\sigma}, v\rangle-(\nabla u,\nabla v) &= -(f,v) \quad \forall\, v\in H^1_0(\Omega),
\end{align}
\end{subequations}
where the Hilbert space
$$H^{-1}(\div\div,\Omega;\mathbb{M}):=\{\boldsymbol{\tau}\in L^2(\Omega;\mathbb{M}): \div\div\boldsymbol{\tau}\in H^{-1}(\Omega)\}$$
with $\mathbb{M}:=\mathbb R^{d\times d}$. It is straightforward to observe that the boundary condition $\partial_{n}u=0$ has been weakly imposed in \eqref{intro-mix1}. We then proceed to develop the corresponding numerical scheme based on the formulation \eqref{intro-mix}. When $\boldsymbol{\tau}\in H(\div,\Omega; \mathbb{M})\subset H^{-1}(\div\div,\Omega; \mathbb{M})$, we have 
$$
-\langle\div\div\boldsymbol{\tau}, v\rangle=(\div\boldsymbol{\tau}, \nabla v)\qquad\forall\,\boldsymbol{\tau}\in H(\div,\Omega; \mathbb{M}), v\in H_0^1(\Omega),
$$
where the Hilbert space
$$
H(\div,\Omega; \mathbb{M}):=\{\boldsymbol{\tau}\in L^2(\Omega;\mathbb{M}): \div\boldsymbol{\tau}\in L^{2}(\Omega;\mathbb R^d)\}.
$$

We discretize $\boldsymbol{\sigma}$ using $H(\div)$-conforming finite elements for tensors with  the shape function space
$$
\Sigma_{r,k,m}(T;\mathbb{M}):=\mathbb{P}_r(T;\mathbb{M})+\mathbb{P}_{k-1}(T;\mathbb{R}^d)\otimes \boldsymbol{x} + (\boldsymbol{x}\otimes\boldsymbol{x})\mathbb{H}_{m-2}(T),
$$
where integers $k\geq1$, $r\geq0$ and $m\geq1$ satisfying the constraint
\begin{equation}\label{intro:conditionkrm}
r=k, k-1; \quad m=k,k+1;\quad r\leq m\leq r+1.
\end{equation}
It is the tensor version of the Brezzi--Douglas--Marini (BDM) element \cite{BrezziDouglasMarini1985,BrezziDouglasDuranFortin1987,nedelec1986new} for $r=m=k$, and the Raviart--Thomas (RT) element \cite{RaviartThomas1977,Nedelec1980} for $r=k-1$ and $m=k$. This tensor-valued finite element is new for $r=k$ and $m=k+1$. The global finite element space for tensors is defined as
\begin{equation*}
\Sigma_{r,k,m,h}^{\div}:=\{\boldsymbol{\tau}\in H(\div,\Omega;\mathbb{M}):\boldsymbol{\tau}|_T\in \Sigma_{r,k,m}(T;\mathbb{M})~\textrm{for}~\textrm{each}~T\in\mathcal{T}_h\}. 
\end{equation*}
For the discretization of $u$, we consider the $H^1$-nonconforming space $\mathring{V}_{k,m,h}^{\rm VE}$ (cf. \eqref{eq:VEMspace}) with the following local degrees of freedoms (DoFs)
\begin{subequations}\label{intro-vedof}
\begin{align}
\label{intro-vedof1}
\dfrac{1}{|F|}(v,q)_F, & \quad q\in \mathbb{P}_{k-1}(F), \, F\in\mathcal{F}_h,\\
\label{intro-vedof2}
\dfrac{1}{|T|}(v,q)_T, & \quad q\in \mathbb{P}_{m-2}(T), \, T\in\mathcal{T}_h,
\end{align}
\end{subequations}
where $k\geq1$ and $m=k,k+1$.
For $k=1$, $\mathring{V}_{k,m,h}^{\rm VE}$ is the $H^1$-nonconforming finite element space in \cite{CR1973,HuHuangLin2014,HuMa2015}. When $k\geq2$, $\mathring{V}_{k,m,h}^{\rm VE}$ is the $H^1$-nonconforming virtual element space on simplicial meshes in \cite{ncvem2016,ChenHuangncvem2020}.
We establish the following orthogonality property for all integers $k\geq1$, $r\geq0$ and $m\geq1$ satisfying the constraint \eqref{intro:conditionkrm}, i.e., \eqref{exchange}:
\begin{equation}\label{intro-int-eq}
(\div\boldsymbol{\tau},\nabla(I^{\rm VE}_{h}u-u))_T=0  \quad \forall\, \boldsymbol{\tau}\in \Sigma_{r,k,m}(T;\mathbb{M}), T\in\mathcal{T}_h,
\end{equation}
where $I^{\rm VE}_{h}: H_0^1(\Omega)\rightarrow \mathring{V}_{k,m,h}^{\rm VE}$ is the canonical interpolation operator based on DoFs \eqref{intro-vedof}.
Notably, if $u$ is discretized using the $k$th order Lagrange element, the corresponding interpolation operator no longer satisfies \eqref{intro-int-eq}, leading to a suboptimal discrete method.

An additional stabilization term is typically required in the VEMs to ensure the coercivity of the discrete bilinear form for the Poisson equation in \cite{BeiraoBrezzi2013,BeiraoBrezzi2014}. However, as in \cite{ChenHuangWei2024vem}, we prove the norm equivalence
\begin{equation*}
\|Q_h^{\rm div}\nabla_h v_h\|_0\eqsim\|\nabla_h v_h\|_{0} \quad \forall\, v_h\in \mathring{V}_{k,m,h}^{\rm VE},
\end{equation*} 
where $Q_h^{\rm div}$ denotes the $L^2$ projector onto the piecewise BDM or RT element space (cf. Section~\ref{sec:divfem}).
This allows us to construct a mixed method without extrinsic stabilization. By utilizing an $L^2$-bounded interpolation operator for tensors and interpolation error estimates, we derive optimal and robust error estimates for the proposed mixed method, independent of the presence of boundary layers. 

Next, we connect the proposed mixed method to a stabilization-free weak Galerkin method by reformulating it using the weak gradient operator $\nabla_w$ and the isomorphism between the virtual element space and the Lagrange multiplier space.
Furthermore, by applying the hybridization technique \cite{ArnoldBrezzi1985} to relax the normal continuity of the finite element space for tensors, we achieve a fully weak Galerkin formulation.

Although the mixed method developed from \eqref{intro-mix} employs virtual elements, it remains equivalent to a mixed FEM. We recast problem \eqref{FSP0} as the following first-order system
\begin{equation*}
\begin{cases}
\boldsymbol{p}=\nabla u,\; \varepsilon^{-2}\boldsymbol{\sigma} = \nabla \boldsymbol{p},  &\mbox{in} \ \Omega,\\
\boldsymbol{\phi}=\div\boldsymbol{\sigma} - \boldsymbol{p}, \; \div\boldsymbol{\phi} = f &\mbox{in} \ \Omega,\\
u=0, \; \boldsymbol{p}=0 &\mbox{on} \ \partial\Omega.
\end{cases}
\end{equation*}
Then we propose the mixed FEM \eqref{FSP1st-mfem} based on this first-order system to approximate $(\boldsymbol{\sigma},\boldsymbol{\phi},\boldsymbol{p}, u)$ simultaneously. We find that when $(\boldsymbol{\sigma}_h,u_h)\in\Sigma_{r,k,m,h}^{\div}\times\mathring{V}_{k,m,h}^{\rm VE}$ is the solution of the mixed method based on \eqref{intro-mix}, the tuple $(\boldsymbol{\sigma}_h,\div\boldsymbol{\sigma}_h-Q_h^{\rm div}\nabla_h u_h,Q_h^{\rm div}\nabla_h u_h, Q_{m-2,h}u_h)$ coincides with the solution of the mixed FEM \eqref{FSP1st-mfem}, where $Q_{m-2,h}$ denotes the projector onto piecewise $(m-2)$th order polynomial space. This means the two mixed methods are equivalent.

The rest of this paper is organized as follows. Two mixed formulations for the fourth-order elliptic singular perturbation problem are presented in Section \ref{sec2}. Section \ref{sec3} introduces some discrete spaces and interpolation results. In Section \ref{sec4}, we propose a mixed method and provide its corresponding error analysis. Section~\ref{sec5} connects the mixed method with other methods, and the numerical experiments in the last section validate the theoretical results.

\section{Mixed Formulations}\label{sec2}

Two mixed formulations for the fourth-order elliptic singular perturbation problem \eqref{FSP0} are shown in this section, namely a distributional mixed formulation based on the second-order system \eqref{FSP1} and a mixed formulation based on the first-order system \eqref{FSP11}.

\subsection{Notation}
Let $\Omega\subset\mathbb{R}^d$ ($d\geq 2$) be a bounded polytope with boundary $\partial\Omega$.
Given a bounded domain $D$ and a real number $m$, denote by $H^m(D) $ the standard Sobolev space on $D$ with norm $\|\cdot\|_{m,D}$ and semi-norm $|\cdot |_{m, D}$, and $H_0^m( D)$ the closure of $C_0^\infty(D)$ with respect to $\|\cdot\|_{m,D}$. The notation $(\cdot,\cdot)_D$ symbolizes the $L^2$ inner product on $D$. Refer to \cite{GiraultRaviart1986}, the Sobolev space $H(\div, D)$ with norm $\|\cdot\|_{H(\div,D)}$ is defined in the standard way. For $D= \Omega$, we abbreviate $\|\cdot \|_{m, D}$, $|\cdot |_{m, D}$, $( \cdot , \cdot ) _D$ and $\|\cdot\|_{H(\div,D)}$ as $\|\cdot \|_m, |\cdot |_m$, $(\cdot,\cdot)$ and $\|\cdot\|_{H(\div)}$, respectively. The duality pairing between a Banach space $V$ and its dual $V'$ is denoted by $\langle \cdot, \cdot \rangle_{V' \times V}$, which will be abbreviated as $\langle \cdot, \cdot \rangle$ whenever no ambiguity arises. Denote by $h_D$ the diameter of $D$.

For a $d$-dimensional simplex $T$, let $\mathcal{F}(T)$ denote the set of all $(d-1)$-dimensional faces of $T$. We use $\boldsymbol{n}_{\partial T}$ to denote the unit outward normal vector of $\partial T$, which is a piecewise constant vector function.
For each face $F \in \mathcal{F}(T)$, we fix a unit normal vector $\boldsymbol{n}_F$. We will abbreviate $\boldsymbol{n}_{\partial T}$ and $\boldsymbol{n}_F$ as $\boldsymbol{n}$ if not causing any confusion. 

Given a face $F\in\mathcal{F}(T)$ and a vector $\boldsymbol{v}\in\mathbb{R}^d$, define its projection to the plane $F$
\begin{equation*}
\Pi_F\boldsymbol{v} = (\boldsymbol{I}-\boldsymbol{n}_F\boldsymbol{n}^\intercal_F)\boldsymbol{v},
\end{equation*}
which is called the tangential component of $\boldsymbol{v}$. 
For a scalar function $v$, let $\nabla v$ and $\grad v$ be the gradient of $v$, which is treated as a column vector.
Define the surface gradient
\begin{equation*}
\nabla_Fv=\Pi_F(\nabla v) = \nabla v-\dfrac{\partial v}{\partial n_F}\boldsymbol{n}_F,
\end{equation*}
namely the projection of $\nabla v$ to the face $F$, which is independent of the choice of the normal vector. Let $\div\boldsymbol{v}$ be the divergence of a vector function  $\boldsymbol{v}$.

Denote by $\mathcal{T}_h=\{T\}$ a conforming simplicial mesh of $\Omega$ with each element being a simplex, where $h:=\max_{T\in\mathcal{T}_h}h_T$ and $h_T=\mathrm{diam}(T)$.
Let $\mathcal{F}_h$ and $\mathring{\mathcal{F}}_h$ be the set of all $(d-1)$-dimensional faces and interior $(d-1)$-dimensional faces, respectively.
Consider two adjacent simplices $T^+$ and $T^-$ sharing an interior face $F$. Define the jump of a function $v$ on $F$ as
\begin{align*}
[\![v]\!]|_F:=(v|_{T^+})|_F\boldsymbol{n}_F\cdot\boldsymbol{n}_{\partial T^+}+(v|_{T^-})|_F\boldsymbol{n}_F\cdot\boldsymbol{n}_{\partial T^-}.
\end{align*}
On a face $F$ lying on the boundary $\partial\Omega$, the above term becomes $[\![v]\!]|_F=v|_F$.

For a bounded domain $D\subset \mathbb{R}^d$ and a non-negative integer $k$, let $\mathbb{P}_k(D)$ stand for the set of all polynomials over $D$ with the total degree no more than $k$. Set $\mathbb{P}_{k}(D) = \{ 0\}$ for $k<0$. Let $\mathbb{H}_k(D):= \mathbb{P}_{k}(D)\backslash\mathbb{P}_{k-1}(D)$ be the space of homogeneous polynomials of degree $k$. Denote by $Q_{k,D}$ the standard $L^2$-projection operator from $L^2(D)$ to $\mathbb{P}_k(D)$, whose vectorial/tensorial version is also denoted by $Q_{k,D}$ if there is no confusion. Let $Q_{k,h}$ be the element-wise version of $Q_{k,D}$ with respect to $\mathcal{T}_h$.
For $s\geq 1$ and integer $k\geq0$, introduce
\begin{align*}
H^s(\mathcal{T}_h)&:=\{v\in L^2(\Omega): v|_T\in H^s(T) \quad \forall\, T\in\mathcal{T}_h\}, \\
\mathbb P_k(\mathcal{T}_h)&:=\{v\in L^2(\Omega): v|_T\in \mathbb P_k(T) \quad \forall\, T\in\mathcal{T}_h\}, \\
\mathbb P_k(\mathcal{F}_h)&:=\{v\in L^2(\mathcal{F}_h): v|_F\in \mathbb P_k(F) \quad \forall\, F\in\mathcal{F}_h\}.	
\end{align*}
Denote $\mathbb P_k(\mathring{\mathcal{F}_h}):=\mathbb P_k(\mathcal{F}_h)\cap L^2(\mathring{\mathcal{F}_h})$, where
\begin{equation*}
L^2(\mathring{\mathcal{F}_h}):=\{v\in L^2(\mathcal{F}_h): v|_F=0 \quad \forall\, F\in\mathcal{F}_h\backslash\mathring{\mathcal{F}_h}\}.
\end{equation*}

Set $\mathbb{M}:= \mathbb{R}^{d\times d}$. Denote by $\mathbb{S}$ and $\mathbb{K}$ the subspace of symmetric matrices and skew-symmetric matrices of $\mathbb{M}$, respectively.
Each tensor $\boldsymbol{\tau}\in\mathbb{M}$ can be decomposed as $\boldsymbol{\tau} = \sym\boldsymbol{\tau}+\skw\boldsymbol{\tau}$, where the symmetric part $\sym\boldsymbol{\tau}\in\mathbb{S}$ and the skew-symmetric part $\skw\boldsymbol{\tau}\in \mathbb{K}$ are defined as
$$
\sym\boldsymbol{\tau}:=\frac{1}{2}(\boldsymbol{\tau}+\boldsymbol{\tau}^\intercal),\quad \skw\boldsymbol{\tau}:=\frac{1}{2}(\boldsymbol{\tau}-\boldsymbol{\tau}^\intercal).
$$
In addition, for a space $\mathbb{B}(D)$ defined on $D$, let $\mathbb{B}(D;\mathbb{X}):=\mathbb{B}(D)\otimes\mathbb{X}$ be its vector or tensor version for $\mathbb{X}$ being $\mathbb{R}^{d}, \mathbb{M}, \mathbb{S}$ and $\mathbb{K}$. 

For a vector-valued function $\boldsymbol{v}$, define 
$$
\nabla\boldsymbol{v}:=\nabla\otimes\boldsymbol{v}, \quad
\grad\boldsymbol{v}:=(\nabla\boldsymbol{v})^{\intercal}=\boldsymbol{v}\otimes\nabla.
$$
For a tensor-valued function $\boldsymbol{\tau}=(\tau_{ij})_{i,j=1}^{d\times d}$, denote by $\div\boldsymbol{\tau}$ the row-wise divergence of $\boldsymbol{\tau}$, i.e., $(\div\boldsymbol{\tau})_{i}=\sum_{j=1}^d\partial_j\tau_{ij}$ for $i=1,\dots,d$.
We use $\nabla_h$ and $\div_h$ to represent the element-wise gradient and $\div$ with respect to $\mathcal{T}_h$. 
For a piecewise smooth scalar, vector-valued or tensor-valued function $v$, define the broken squared seminorm with $s\geq 1$
$$|v|_{s,h}^2:=\sum_{T\in\mathcal{T}_h}|v|^2_{s,T}.$$
In this paper, we use $\lesssim$ to represent $\leq C$, where $C$ is a generic positive constant independent of the mesh size $h$ and the parameter $\varepsilon$. And $a\eqsim b$ means $a\lesssim b\lesssim a$.

\subsection{A distributional mixed formulation}
Introducing $\boldsymbol{\sigma} := \varepsilon^2\nabla^2 u$, rewrite the fourth-order elliptic singular perturbation problem \eqref{FSP0} as the following second-order system
\begin{equation}\label{FSP1}
\begin{cases}
\varepsilon^{-2}\boldsymbol{\sigma} = \nabla^2 u &\mbox{in} \ \Omega,\\
\div\div\boldsymbol{\sigma} - \Delta u = f &\mbox{in} \ \Omega,\\
u=\partial_{n}u=0 &\mbox{on} \ \partial\Omega.
\end{cases}
\end{equation}
Define the Hilbert space
$$H^{-1}(\div\div,\Omega;\mathbb{M}):=\{\boldsymbol{\tau}\in L^2(\Omega;\mathbb{M}): \div\div\boldsymbol{\tau}\in H^{-1}(\Omega)\}$$
with squared norm 
$$
\|\boldsymbol{\tau}\|^2_{\varepsilon^{-1},H^{-1}(\div\div)}:=\varepsilon^{-2}\|\boldsymbol{\tau}\|_0^2+\|\div\div\boldsymbol{\tau}\|_{-1}^2, 
$$
where
$$\|\div\div\boldsymbol{\tau}\|_{-1}: = \sup_{v\in H^1_0(\Omega),v\neq0}\dfrac{\langle\div\div\boldsymbol{\tau}, v\rangle}{|v|_1}.$$
A distributional mixed formulation of the second-order system \eqref{FSP1} is to find $(\boldsymbol{\sigma},u)\in H^{-1}(\div\div,\Omega; \mathbb{M})\times H^1_0(\Omega)$ such that
\begin{subequations}\label{FSP-mix}
\begin{align}
\label{FSP-mix1}
a(\boldsymbol{\sigma},\boldsymbol{\tau})+b(\boldsymbol{\tau},u) &= 0 \quad\quad\quad\;\; \forall\, \boldsymbol{\tau}\in  H^{-1}(\div\div,\Omega; \mathbb{M}), \\
\label{FSP-mix2}
b(\boldsymbol{\sigma},v)-c(u,v) &= -(f,v) \quad \forall\, v\in H^1_0(\Omega),
\end{align}
\end{subequations}
where 
$$a(\boldsymbol{\sigma},\boldsymbol{\tau}):=\varepsilon^{-2}(\boldsymbol{\sigma},\boldsymbol{\tau}), \quad b(\boldsymbol{\tau},v):=-\langle\div\div\boldsymbol{\tau}, v\rangle, \quad c(u,v):=(\nabla u,\nabla v).$$

\begin{theorem}
The distributional mixed formulation \eqref{FSP-mix} is well-posed with the solution $(\boldsymbol{\sigma},u)\in H^{-1}(\div\div,\Omega; \mathbb{M})\times H^1_0(\Omega)$. Furthermore, $\boldsymbol{\sigma}\in H^{-1}(\div\div,\Omega; \mathbb{S})$, where
$$H^{-1}(\div\div,\Omega;\mathbb{S}):=\{\boldsymbol{\tau}\in L^2(\Omega;\mathbb{S}): \div\div\boldsymbol{\tau}\in H^{-1}(\Omega)\}.$$
Notice that $H^{-1}(\Omega)$ is the dual space of $H_0^1(\Omega)$.
\end{theorem}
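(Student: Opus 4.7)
I would treat \eqref{FSP-mix} as a single symmetric saddle-point problem on the product space $X := H^{-1}(\div\div,\Omega;\mathbb{M})\times H^1_0(\Omega)$ with product norm $\|(\boldsymbol{\tau}, v)\|_X^2 := \|\boldsymbol{\tau}\|_{\varepsilon^{-1},H^{-1}(\div\div)}^2 + |v|_1^2$, and verify the Banach--Ne\v{c}as--Babu\v{s}ka conditions for the bilinear form
\begin{equation*}
\mathcal{B}((\boldsymbol{\sigma}, u), (\boldsymbol{\tau}, v)) := a(\boldsymbol{\sigma}, \boldsymbol{\tau}) + b(\boldsymbol{\tau}, u) + b(\boldsymbol{\sigma}, v) - c(u, v),
\end{equation*}
which is symmetric by inspection. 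Continuity on $X\times X$ is immediate from Cauchy--Schwarz together with the definition of $\|\cdot\|_{\varepsilon^{-1},H^{-1}(\div\div)}$.

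To establish the inf-sup condition, for each $(\boldsymbol{\sigma}, u)\in X$ I would introduce the Riesz representer $w\in H^1_0(\Omega)$ of $\div\div\boldsymbol{\sigma}\in H^{-1}(\Omega)$, i.e.\ the unique solution of $(\nabla w, \nabla v) = \langle\div\div\boldsymbol{\sigma}, v\rangle$ for all $v\in H^1_0(\Omega)$, which satisfies $|w|_1 = \|\div\div\boldsymbol{\sigma}\|_{-1}$. Taking the test direction $(\boldsymbol{\tau}_*, v_*) := (\boldsymbol{\sigma}, -u - w)$, the cross terms $b(\boldsymbol{\sigma}, u) + b(\boldsymbol{\sigma}, -u)$ cancel and the remaining terms evaluate to
\begin{equation*}
\mathcal{B}((\boldsymbol{\sigma}, u), (\boldsymbol{\tau}_*, v_*)) = \varepsilon^{-2}\|\boldsymbol{\sigma}\|_0^2 + |u|_1^2 + \|\div\div\boldsymbol{\sigma}\|_{-1}^2 + (\nabla u, \nabla w).
\end{equation*}
The last cross term is absorbed via Young's inequality $|(\nabla u, \nabla w)| \leq \tfrac12 |u|_1^2 + \tfrac12 \|\div\div\boldsymbol{\sigma}\|_{-1}^2$, giving $\mathcal{B}((\boldsymbol{\sigma}, u), (\boldsymbol{\tau}_*, v_*)) \gtrsim \|(\boldsymbol{\sigma}, u)\|_X^2$ with an $\varepsilon$-independent constant; meanwhile $\|(\boldsymbol{\tau}_*, v_*)\|_X \lesssim \|(\boldsymbol{\sigma}, u)\|_X$. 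By the symmetry of $\mathcal{B}$ the dual inf-sup is automatic, and well-posedness follows.

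For the symmetry assertion, I would test \eqref{FSP-mix1} against skew-symmetric matrices. For any $\boldsymbol{\tau}\in L^2(\Omega;\mathbb{K})$ and $\phi\in C_c^\infty(\Omega)$ one has $(\boldsymbol{\tau}, \nabla^2\phi) = 0$ because $\nabla^2\phi\in \mathbb{S}$, so $\div\div\boldsymbol{\tau} = 0$ as a distribution in $\mathcal{D}'(\Omega)\subset H^{-1}(\Omega)$. Consequently $L^2(\Omega;\mathbb{K})\subset H^{-1}(\div\div,\Omega;\mathbb{M})$ and $b(\boldsymbol{\tau}, u) = 0$ for every such $\boldsymbol{\tau}$. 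Hence \eqref{FSP-mix1} reduces to $(\boldsymbol{\sigma}, \boldsymbol{\tau}) = 0$ for all $\boldsymbol{\tau}\in L^2(\Omega;\mathbb{K})$, forcing $\skw\boldsymbol{\sigma} = 0$ and therefore $\boldsymbol{\sigma}\in H^{-1}(\div\div,\Omega;\mathbb{S})$.

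\textbf{Main obstacle.} The subtle step is restoring the $\|\div\div\boldsymbol{\sigma}\|_{-1}^2$ contribution in the inf-sup bound: the diagonal test $(\boldsymbol{\sigma}, -u)$ alone recovers only the $a + c$ piece because of the $-c(u,v)$ sign in \eqref{FSP-mix2}, so the correction by the $H^1_0$-Riesz representer $w$ of $\div\div\boldsymbol{\sigma}$ is essential, and the $\varepsilon$-independence of the constant depends on the scaling of $a$ by $\varepsilon^{-2}$ exactly matching the $\varepsilon^{-2}\|\cdot\|_0^2$ piece of the $H^{-1}(\div\div)$ norm.
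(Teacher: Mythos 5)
Your proposal is correct and works, but it follows a different route from the paper. The paper verifies the two split conditions of Zulehner's abstract theory \cite[Theorem~2.6]{zulehner2011}:
\begin{align*}
a(\boldsymbol{\tau}, \boldsymbol{\tau})+ \sup_{v\in H^1_0(\Omega)}\frac{b^2(\boldsymbol{\tau}, v)}{|v|_1^2} &=\|\boldsymbol{\tau}\|^2_{\varepsilon^{-1},H^{-1}(\div\div)},\\
c(v, v) + \sup_{\boldsymbol{\tau}\in H^{-1}(\div\div,\Omega; \mathbb{M})}\frac{b^2(\boldsymbol{\tau}, v)}{\|\boldsymbol{\tau}\|_{\varepsilon^{-1},H^{-1}(\div\div)}^2} &\eqsim |v|^2_1,
\end{align*}
and then invokes Zulehner's theorem, which is the same framework the paper later reuses to prove discrete stability. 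You instead directly verify the full Banach--Ne\v{c}as--Babu\v{s}ka inf-sup condition by constructing the explicit test direction $(\boldsymbol{\sigma}, -u-w)$, where $w$ is the $H^1_0$-Riesz representer of $\div\div\boldsymbol{\sigma}$; the cross-term cancellation and Young's inequality then deliver an $\varepsilon$-uniform inf-sup constant. The two arguments carry essentially the same information: Zulehner's theory internally performs the analogous shifted-test-function construction, and your direct computation makes that hidden step explicit. Your route is more elementary and self-contained (no appeal to an external abstract theorem), but loses the uniformity of phrasing with the discrete analysis later in the paper; the paper's route trades explicitness for that uniformity. On the symmetry claim your argument agrees with the paper's: the paper takes the single test function $\boldsymbol{\tau}=\skw\boldsymbol{\sigma}$, while you test against all of $L^2(\Omega;\mathbb{K})$, but both rest on the same observation that $\div\div\boldsymbol{\tau}=0$ in $\mathcal{D}'(\Omega)$ for skew-symmetric $L^2$ tensors because $\nabla^2\phi$ is symmetric.
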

\begin{proof}
It can be readily verified that 
\begin{align*}
a(\boldsymbol{\tau}, \boldsymbol{\tau})&+ \sup_{v\in H^1_0(\Omega)}\dfrac{b^2(\boldsymbol{\tau}, v)}{|v|_1^2} =\|\boldsymbol{\tau}\|^2_{\varepsilon^{-1},H^{-1}(\div\div)} \quad \forall\, \boldsymbol{\tau}\in H^{-1}(\div\div,\Omega; \mathbb{M}),\\		
c(v, v) &+ \sup_{\boldsymbol{\tau}\in H^{-1}(\div\div,\Omega; \mathbb{M})}\dfrac{b^2(\boldsymbol{\tau}, v)}{\|\boldsymbol{\tau}\|_{\varepsilon^{-1},H^{-1}(\div\div)}^2} \eqsim |v|^2_1 \quad \forall\, v\in H^1_0(\Omega).
\end{align*}
Then by the Zulehner theory \cite[Theorem 2.6]{zulehner2011}, the mixed formulation \eqref{FSP-mix} is well-posed.

Take $\boldsymbol{\tau}=\skw\boldsymbol{\sigma}\in H^{-1}(\div\div,\Omega; \mathbb{M})$ in equation \eqref{FSP-mix1}. By $\div\div\boldsymbol{\tau}=0$, we have $\|\skw\boldsymbol{\sigma}\|_0^2=0$. Hence $\boldsymbol{\sigma}\in H^{-1}(\div\div,\Omega; \mathbb{S})$. 
\end{proof}

Although $\boldsymbol{\sigma}\in H^{-1}(\div\div,\Omega; \mathbb{M})$ explicitly, equation \eqref{FSP-mix1} inherently indicates that $\boldsymbol{\sigma}\in H^{-1}(\div\div,\Omega; \mathbb{S})$.
When $\boldsymbol{\tau}\in H(\div,\Omega; \mathbb{M})\subset H^{-1}(\div\div,\Omega; \mathbb{M})$, we have 
$$
-\langle\div\div\boldsymbol{\tau}, v\rangle=(\div\boldsymbol{\tau}, \nabla v)\qquad\forall\,\boldsymbol{\tau}\in H(\div,\Omega; \mathbb{M}), v\in H_0^1(\Omega).
$$
We will discretize $\boldsymbol{\sigma}$ using a finite element subspace of $H(\div,\Omega; \mathbb{M})$ in later sections.	

\begin{remark}\rm
Unlike the distributional mixed formulation in \cite{ChenHuang2018,LiuHuangWang2020,ChenHuHuang2018,HHJ2016,ChenHuang2025,Hellan1967,Herrmann1967,Johnson1973}, $\boldsymbol{\sigma}$ here is chosen to belong to $H^{-1}(\div\div,\Omega; \mathbb{M})$ instead of $H^{-1}(\div\div,\Omega; \mathbb{S})$.
\end{remark}

\begin{lemma}
The problem \eqref{FSP0} and the mixed formulation \eqref{FSP-mix} are equivalent.
\end{lemma}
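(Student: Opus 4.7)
The plan is to prove the two implications of the equivalence separately, using throughout the identification $\boldsymbol{\sigma}=\varepsilon^2\nabla^2 u$. The subtle point is that, on the primal side, the boundary condition $\partial_n u=0$ is built into the trial space $H^2_0(\Omega)$, whereas on the mixed side it must be weakly encoded in the first equation \eqref{FSP-mix1}.

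For the forward direction, let $u\in H^2_0(\Omega)$ be the standard weak solution of \eqref{FSP0}. Set $\boldsymbol{\sigma}:=\varepsilon^2\nabla^2 u\in L^2(\Omega;\mathbb{S})$. Testing the primal formulation against $\varphi\in C_c^\infty(\Omega)$ and using the distributional definition of $\div\div$ gives $\div\div\boldsymbol{\sigma}=\Delta u+f\in H^{-1}(\Omega)$, so $\boldsymbol{\sigma}\in H^{-1}(\div\div,\Omega;\mathbb{M})$. To verify \eqref{FSP-mix1} for an arbitrary $\boldsymbol{\tau}\in H^{-1}(\div\div,\Omega;\mathbb{M})$, I approximate $u$ in $H^2$ by $u_n\in C_c^\infty(\Omega)$ and pass to the limit in $(\boldsymbol{\tau},\nabla^2 u_n)=\langle\div\div\boldsymbol{\tau},u_n\rangle$, noting $\nabla^2 u_n\to\nabla^2 u$ in $L^2$ and $u_n\to u$ in $H_0^1(\Omega)$ with $\div\div\boldsymbol{\tau}\in H^{-1}(\Omega)$. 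Equation \eqref{FSP-mix2} then follows by substituting $\div\div\boldsymbol{\sigma}=\Delta u+f$ and invoking $\langle\Delta u,v\rangle=-(\nabla u,\nabla v)$ for $v\in H_0^1(\Omega)$.

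For the reverse direction, let $(\boldsymbol{\sigma},u)\in H^{-1}(\div\div,\Omega;\mathbb{M})\times H_0^1(\Omega)$ solve \eqref{FSP-mix}. Testing \eqref{FSP-mix1} with $\boldsymbol{\tau}\in C_c^\infty(\Omega;\mathbb{M})$ yields $\varepsilon^{-2}\boldsymbol{\sigma}=\nabla^2 u$ in $\mathcal{D}'(\Omega;\mathbb{M})$, and since $\boldsymbol{\sigma}\in L^2$ this upgrades $u$ to $H^2(\Omega)\cap H_0^1(\Omega)$. Retesting \eqref{FSP-mix1} against smooth $\boldsymbol{\tau}\in C^\infty(\overline{\Omega};\mathbb{M})$ and applying Green's identity twice for the Hessian, together with $u=0$ and $\nabla u=(\partial_n u)\boldsymbol{n}$ on $\partial\Omega$, leaves the single surviving boundary term $\int_{\partial\Omega}\partial_n u\,(\boldsymbol{n}\cdot\boldsymbol{\tau}\boldsymbol{n})\,ds$; it must vanish for every smooth $\boldsymbol{\tau}$, and by prescribing $\boldsymbol{n}\cdot\boldsymbol{\tau}\boldsymbol{n}|_{\partial\Omega}$ arbitrarily I conclude $\partial_n u=0$, so $u\in H^2_0(\Omega)$. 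Finally, testing \eqref{FSP-mix2} with $v\in C_c^\infty(\Omega)$ and substituting $\boldsymbol{\sigma}=\varepsilon^2\nabla^2 u$ produces the distributional PDE $\varepsilon^2\Delta^2 u-\Delta u=f$.

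The main obstacle is this second step of the reverse direction: the hidden boundary condition $\partial_n u=0$ has to be unearthed from \eqref{FSP-mix1}. It requires first bootstrapping $u$ to $H^2$ through compactly supported test tensors (so that the trace of $\partial_n u$ exists), and then correctly identifying that precisely the normal-normal component $\boldsymbol{n}\cdot\boldsymbol{\tau}\boldsymbol{n}$ is dual to $\partial_n u$ in the Hessian Green identity. This is exactly why admitting $\boldsymbol{\tau}\in H^{-1}(\div\div,\Omega;\mathbb{M})$ rather than only $H(\div,\Omega;\mathbb{M})$ is essential in the formulation. Once this is established, the remaining verifications are routine distributional calculations.
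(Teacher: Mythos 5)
Your argument is correct but takes a genuinely different route from the paper. The paper's proof is one-directional: since the mixed formulation \eqref{FSP-mix} has already been shown uniquely solvable (the preceding theorem) and the primal problem \eqref{FSP0} is uniquely solvable by Lax--Milgram, the authors simply check that the primal solution, upon setting $\boldsymbol{\sigma}=\varepsilon^2\nabla^2u$, yields a mixed solution; uniqueness of both problems then identifies the two solution sets, so nothing more is needed. You instead prove both implications constructively. The reverse direction, which the paper sidesteps entirely, is the interesting extra content of your argument: you test \eqref{FSP-mix1} against compactly supported $\boldsymbol{\tau}$ to bootstrap $u$ into $H^2(\Omega)\cap H_0^1(\Omega)$, and then test against $\boldsymbol{\tau}\in C^\infty(\overline{\Omega};\mathbb{M})$ and apply the Hessian Green identity together with $\nabla u|_{\partial\Omega}=(\partial_n u)\boldsymbol{n}$ to isolate the single surviving boundary term $\int_{\partial\Omega}\partial_n u\,(\boldsymbol{n}^\intercal\boldsymbol{\tau}\boldsymbol{n})\,ds$, whose vanishing for all such $\boldsymbol{\tau}$ (e.g.\ taking $\boldsymbol{\tau}=g\boldsymbol{I}$ with $g$ arbitrary smooth) forces $\partial_n u=0$. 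This buys a self-contained derivation independent of the separate well-posedness theorem, at the cost of the Green-identity bookkeeping; the paper's route is shorter precisely because that theorem is already in hand. A small point in your favor: you correctly require an $H^2$-approximating sequence $u_n\in C_c^\infty(\Omega)$ so that both $(\boldsymbol{\tau},\nabla^2 u_n)\to(\boldsymbol{\tau},\nabla^2 u)$ and $\langle\div\div\boldsymbol{\tau},u_n\rangle\to\langle\div\div\boldsymbol{\tau},u\rangle$ hold; the paper's phrase ``density of $C_0^\infty(\Omega)$ in $H_0^1(\Omega)$'' is slightly imprecise there, since convergence of the Hessian term needs density in $H_0^2(\Omega)$.
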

\begin{proof}
Since both problems are uniquely solvable, it suffices to show that $(\boldsymbol{\sigma},u)$ with $\boldsymbol{\sigma} = \varepsilon^2\nabla^2u$ solves the mixed formulation \eqref{FSP-mix}, if $u$ solves problem \eqref{FSP0}. Assume $ u \in H_0^2(\Omega) $ is the solution of problem \eqref{FSP0}. Then $\boldsymbol{\sigma} \in H^{-1}(\div\div, \Omega; \mathbb{M})$, and equation \eqref{FSP-mix2} is derived from \eqref{FSP0} by applying integration by parts to $-(\Delta u,v)$.  
Using the definition of $\div\div \boldsymbol{\tau}$ in the distributional sense and the density of $ C_0^\infty(\Omega) $ in $ H_0^1(\Omega)$, equation \eqref{FSP-mix1} follows from $\boldsymbol{\sigma} = \varepsilon^2 \nabla^2 u$.
\end{proof}

\subsection{A mixed formulation based on the first-order system}
Rewrite the fourth-order elliptic singular perturbation problem \eqref{FSP0} as the following first-order system
\begin{equation}\label{FSP11}
\begin{cases}
\boldsymbol{p}=\nabla u,\; \varepsilon^{-2}\boldsymbol{\sigma} = \nabla \boldsymbol{p},  &\mbox{in} \ \Omega,\\
\boldsymbol{\phi}=\div\boldsymbol{\sigma} - \boldsymbol{p}, \; \div\boldsymbol{\phi} = f &\mbox{in} \ \Omega,\\
u=0, \; \boldsymbol{p}=0 &\mbox{on} \ \partial\Omega.
\end{cases}
\end{equation}
A mixed formulation of this first-order system is to find $(\boldsymbol{\sigma},\boldsymbol{\phi},\boldsymbol{p}, u)\in H(\div,\Omega; \mathbb{M})\times H(\div,\Omega)\times L^2(\Omega;\mathbb R^d)\times L^2(\Omega)$ such that
\begin{subequations}\label{FSP1st-mix}
\begin{align}
\label{FSP1st-mix1}
\varepsilon^{-2}(\boldsymbol{\sigma},\boldsymbol{\tau})+\bar{b}(\boldsymbol{\tau},\boldsymbol{\psi};\boldsymbol{p}, u) &= 0 \quad\quad\quad\;\; \forall\, \boldsymbol{\tau}\in  H(\div,\Omega; \mathbb{M}), \boldsymbol{\psi}\in  H(\div,\Omega), \\
\label{FSP1st-mix2}
\bar{b}(\boldsymbol{\sigma},\boldsymbol{\phi};\boldsymbol{q}, v)-(\boldsymbol{p},\boldsymbol{q}) &= -(f,v) \quad \forall\, \boldsymbol{q}\in L^2(\Omega;\mathbb R^d), v\in L^2(\Omega),
\end{align}
\end{subequations}
where 
$$
\bar{b}(\boldsymbol{\tau},\boldsymbol{\psi};\boldsymbol{q}, v):=(\div\boldsymbol{\tau}-\boldsymbol{\psi}, \boldsymbol{q})-(\div\boldsymbol{\psi}, v).
$$

For $(\boldsymbol{\tau}, \boldsymbol{\psi})\in H(\div,\Omega; \mathbb{M})\times H(\div,\Omega)$, we equip the following parameter-dependent norm
$$
\|(\boldsymbol{\tau}, \boldsymbol{\psi})\|^2_{\varepsilon^{-1},\div}:=\varepsilon^{-2}\|\boldsymbol{\tau}\|_0^2+\|\div\boldsymbol{\tau}-\boldsymbol{\psi}\|_{0}^2+\|\div\boldsymbol{\psi}\|_{0}^2. 
$$

\begin{theorem}
The mixed formulation \eqref{FSP1st-mix} is well-posed.
\end{theorem}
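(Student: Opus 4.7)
The plan is to reuse the approach from the well-posedness proof of the distributional formulation \eqref{FSP-mix} and apply the Zulehner theory \cite[Theorem 2.6]{zulehner2011} once more. I would equip $M:=L^2(\Omega;\mathbb{R}^d)\times L^2(\Omega)$ with the product norm $\|(\boldsymbol{q},v)\|_M^2:=\|\boldsymbol{q}\|_0^2+\|v\|_0^2$ and view \eqref{FSP1st-mix} as a saddle-point system whose two diagonal forms $\varepsilon^{-2}(\boldsymbol{\sigma},\boldsymbol{\tau})$ on $(\boldsymbol{\sigma},\boldsymbol{\phi})$ and $(\boldsymbol{p},\boldsymbol{q})$ on $(\boldsymbol{p},u)$ are only semidefinite (neither $\boldsymbol{\phi}$ nor $u$ is directly controlled). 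It then suffices to verify the two equivalences required by Zulehner:
\begin{align*}
\varepsilon^{-2}\|\boldsymbol{\tau}\|_0^2+\sup_{0\neq(\boldsymbol{q},v)\in M}\dfrac{\bar{b}(\boldsymbol{\tau},\boldsymbol{\psi};\boldsymbol{q},v)^2}{\|\boldsymbol{q}\|_0^2+\|v\|_0^2}&\eqsim\|(\boldsymbol{\tau},\boldsymbol{\psi})\|_{\varepsilon^{-1},\div}^2,\\
\|\boldsymbol{p}\|_0^2+\sup_{0\neq(\boldsymbol{\tau},\boldsymbol{\psi})}\dfrac{\bar{b}(\boldsymbol{\tau},\boldsymbol{\psi};\boldsymbol{p},u)^2}{\|(\boldsymbol{\tau},\boldsymbol{\psi})\|_{\varepsilon^{-1},\div}^2}&\eqsim\|\boldsymbol{p}\|_0^2+\|u\|_0^2.
\end{align*}

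The first equivalence is essentially algebraic. Writing $\bar{b}(\boldsymbol{\tau},\boldsymbol{\psi};\boldsymbol{q},v)$ as the $M$-pairing of $(\div\boldsymbol{\tau}-\boldsymbol{\psi},-\div\boldsymbol{\psi})$ with $(\boldsymbol{q},v)$, Cauchy--Schwarz (saturated by $(\boldsymbol{q},v)=(\div\boldsymbol{\tau}-\boldsymbol{\psi},-\div\boldsymbol{\psi})$) yields a supremum equal to $\|\div\boldsymbol{\tau}-\boldsymbol{\psi}\|_0^2+\|\div\boldsymbol{\psi}\|_0^2$, so the left-hand side reproduces $\|(\boldsymbol{\tau},\boldsymbol{\psi})\|_{\varepsilon^{-1},\div}^2$ with constant $1$. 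The upper bound in the second equivalence likewise follows from two applications of Cauchy--Schwarz to $\bar{b}$, giving $\sup\leq\|\boldsymbol{p}\|_0^2+\|u\|_0^2$.

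The main obstacle is the lower bound in the second equivalence, since $\|\boldsymbol{p}\|_0^2$ alone provides no control of $u$. I would recover $\|u\|_0$ by testing with $\boldsymbol{\tau}=0$ and $\boldsymbol{\psi}=\nabla w$, where $w\in H_0^1(\Omega)$ solves $-\Delta w=u$; Poincaré's inequality then gives $\|\boldsymbol{\psi}\|_0\lesssim\|u\|_0$ and $\|\div\boldsymbol{\psi}\|_0=\|u\|_0$, while a direct computation yields
$$\bar{b}(0,\boldsymbol{\psi};\boldsymbol{p},u)=\|u\|_0^2-(\nabla w,\boldsymbol{p}).$$
The elementary estimate $(a-b)^2\geq\tfrac12 a^2-b^2$ then leads to a bound of the form $\sup\gtrsim c_1\|u\|_0^2-c_2\|\boldsymbol{p}\|_0^2$ with absolute constants $c_1,c_2>0$; combining linearly with the trivial estimate $\|\boldsymbol{p}\|_0^2\leq\|\boldsymbol{p}\|_0^2+\sup$ closes the argument. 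Since no constant in the argument depends on $\varepsilon$, Zulehner's theorem then delivers well-posedness of \eqref{FSP1st-mix} with $\varepsilon$-robust stability in the parameter-dependent norm $\|(\boldsymbol{\tau},\boldsymbol{\psi})\|_{\varepsilon^{-1},\div}$ on $X$ and the $L^2$-product norm on $M$.
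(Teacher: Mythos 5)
Your proposal is correct and follows essentially the same route as the paper: both verify the two Zulehner conditions and both recover control of $\|u\|_0$ from the $\div\boldsymbol{\psi}$ block of $\bar b$. The only cosmetic difference is that you make the inf-sup on $u$ explicit by constructing $\boldsymbol{\psi}=\nabla w$ with $-\Delta w=u$ and then using $(a-b)^2\geq\tfrac12 a^2-b^2$, whereas the paper invokes $\div H(\div,\Omega)=L^2(\Omega)$ abstractly and rearranges $\bar b(0,\boldsymbol{\psi};\boldsymbol{q},v)+(\boldsymbol{\psi},\boldsymbol{q})=-(\div\boldsymbol{\psi},v)$ to absorb the $\boldsymbol{p}$-term; these are the same underlying estimate.
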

\begin{proof}
It can be readily verified that 
\begin{equation*}
\varepsilon^{-2}\|\boldsymbol{\tau}\|_0^2+ \sup_{\boldsymbol{q}\in L^2(\Omega;\mathbb R^d), v\in L^2(\Omega)}\frac{\bar{b}^2(\boldsymbol{\tau},\boldsymbol{\psi};\boldsymbol{q}, v)}{\|\boldsymbol{q}\|_0^2+\|v\|_0^2} \eqsim \|(\boldsymbol{\tau}, \boldsymbol{\psi})\|^2_{\varepsilon^{-1},\div}
\end{equation*}
for $\boldsymbol{\tau}\in  H(\div,\Omega; \mathbb{M})$ and $\boldsymbol{\psi}\in  H(\div,\Omega)$, and 
\begin{equation*}
\|\boldsymbol{q}\|_0^2 + \beta^2 \lesssim  \|\boldsymbol{q}\|_0^2+\|v\|_0^2\;\; \textrm{ with }\; \beta:=\sup_{\boldsymbol{\tau}\in  H(\div,\Omega; \mathbb{M}), \boldsymbol{\psi}\in  H(\div,\Omega)}\frac{\bar{b}(\boldsymbol{\tau},\boldsymbol{\psi};\boldsymbol{q}, v)}{\|(\boldsymbol{\tau}, \boldsymbol{\psi})\|_{\varepsilon^{-1},\div}}
\end{equation*}
for $\boldsymbol{q}\in L^2(\Omega;\mathbb R^d)$ and $v\in L^2(\Omega)$.
Thanks to $\div H(\div,\Omega)=L^2(\Omega)$,
\begin{equation*}
\|v\|_0\lesssim \sup_{\boldsymbol{\psi}\in  H(\div,\Omega)}\frac{-(\div\boldsymbol{\psi}, v)}{\|\boldsymbol{\psi}\|_{H(\div)}}=\sup_{\boldsymbol{\psi}\in  H(\div,\Omega)}\frac{\bar{b}(0,\boldsymbol{\psi};\boldsymbol{q}, v)+(\boldsymbol{\psi},\boldsymbol{q})}{\|\boldsymbol{\psi}\|_{H(\div)}} \leq \beta + \|\boldsymbol{q}\|_0.
\end{equation*}
Combining the last two inequalities gives
\begin{equation*}
\|\boldsymbol{q}\|_0^2 + \beta^2 \eqsim  \|\boldsymbol{q}\|_0^2+\|v\|_0^2 \qquad \forall\,\boldsymbol{q}\in L^2(\Omega;\mathbb R^d), v\in L^2(\Omega).
\end{equation*}
Then by the Zulehner theory \cite[Theorem 2.6]{zulehner2011}, the mixed formulation \eqref{FSP1st-mix} is well-posed.
\end{proof}

By \eqref{FSP1st-mix1}, we have $\boldsymbol{\sigma}=\varepsilon^{2}\nabla^2u$, thus the solution $\boldsymbol{\sigma}$ of the mixed formulation~\eqref{FSP1st-mix} is symmetric.

\begin{remark}\rm
By $\boldsymbol{\sigma}=\varepsilon^{2}\nabla^2u$ and $\boldsymbol{\sigma}\in H(\div,\Omega; \mathbb{M})$, the solution $u\in H_0^2(\Omega)$ of the mixed formulation~\eqref{FSP1st-mix} satisfies $\Delta u\in H^1(\Omega)$.
If the solution $u \in H_0^2(\Omega)$ of the original problem~\eqref{FSP0} also satisfies $\Delta u \in H^1(\Omega)$, then the mixed formulation~\eqref{FSP1st-mix} is equivalent to~\eqref{FSP0}.
However, if $\Delta u \notin H^1(\Omega)$, the two formulations are not necessarily equivalent.
For related results on the biharmonic equation, we refer to~\cite{NazarovSweers2007,ZhangZhang2008,GerasimovStylianouSweers2012}.
\end{remark}

\subsection{Regularity}
Taking $\varepsilon = 0$, problem \eqref{FSP0} becomes the Poisson equation
\begin{equation}\label{poisson}
\begin{cases}
-\Delta \bar{u}=f &\mbox{in} \ \Omega,\\
\bar{u}=0 &\mbox{on} \ \partial\Omega.
\end{cases}
\end{equation}
We assume the Poisson equation \eqref{poisson} has the $s$-regularity with $s\geq 2$ 
\begin{equation}\label{poissonregularity}
\|\bar{u}\|_s\lesssim\|f\|_{s-2}.
\end{equation}
If $\Omega$ is semi-convex (i.e., a Lipschitz domain satisfying a uniform exterior ball condition; cf. \cite{MitreaMitreaYan2010}) or if the closure of $\Omega$ has a uniformly positive reach (cf. \cite[Definition 1.2]{GaoLai2020}), the regularity result \eqref{poissonregularity} for $s = 2$ can be found in \cite{GaoLai2020,MitreaMitreaYan2010,Adolfsson1992,Kadlec1964,Talenti1965}. Notably, any convex domain is also semi-convex.

Assume the fourth-order elliptic singular perturbation problem \eqref{FSP0} possesses
the following regularity 
\begin{equation}\label{regularity}
|u-\bar{u}|_1+\varepsilon|u|_2+\varepsilon^2|u|_3\lesssim\varepsilon^{1/2}\|f\|_0.
\end{equation}
The regularity \eqref{regularity} holds when $\Omega$ is convex in two and three dimensions;
see \cite[Lemma 5.1]{NTW2001} and \cite[Lemma 4]{Guzman2012}.

\section{Discrete Spaces and Interpolations}\label{sec3}
In this section, we present $H(\div)$-conforming finite elements, an $H^1$-nonconforming virtual element, and their corresponding interpolation operators.

\subsection{$H(\div)$-conforming finite elements}\label{sec:divfem}
Recall the Brezzi--Douglas--Marini (BDM) element \cite{ChenHuang2022div,ChenChenHuangWei2024,ChenHuang2024,BrezziDouglasMarini1985,BrezziDouglasDuranFortin1987,nedelec1986new} and the Raviart--Thomas (RT) element \cite{RaviartThomas1977,Nedelec1980}.
Let $T$ be a $d$-dimensional simplex. For integers $k\geq1$ and $m\geq2$ with $m=k,k+1$,
take $V_{k-1,m-1}(T;\mathbb R^d):=\mathbb{P}_{k-1}(T;\mathbb{R}^d)+\boldsymbol{x}\mathbb{H}_{m-2}(T)$ as the space of shape functions, and the degrees of freedom (DoFs) are given by (cf. \cite{ChenHuang2022div})
\begin{subequations}\label{div-dof}
\begin{align}
\label{div-dof1}
(\boldsymbol{v}\cdot \boldsymbol n, q)_F, & \quad q\in \mathbb{P}_{k-1}(F), \, F\in\mathcal{F}(T),\\
\label{div-dof2}
(\boldsymbol{v}, \boldsymbol{q})_T, & \quad  \boldsymbol{q}\in 
\nabla\mathbb{P}_{m-2}(T)\oplus (\mathbb{P}_{k-2}(T;\mathbb{R}^d)\cap\ker(\cdot\boldsymbol{x})),
\end{align}
where $\mathbb{P}_{k-2}(T;\mathbb{R}^d)\cap\ker(\cdot\boldsymbol{x}):=\{\boldsymbol{q}\in\mathbb{P}_{k-2}(T;\mathbb{R}^d): \boldsymbol{q}\cdot\boldsymbol{x}=0\}$.
\end{subequations}
By Lemma 3.9 in~\cite{ChenHuang2022div},
\begin{align}
\label{eq:divDoFintfullk}
\mathbb{P}_{m-3}(T;\mathbb R^d) & = \nabla\mathbb{P}_{m-2}(T)\oplus (\mathbb{P}_{m-3}(T;\mathbb{R}^d)\cap\ker(\cdot\boldsymbol{x})) \\
\notag
& \subseteq \nabla\mathbb{P}_{m-2}(T)\oplus (\mathbb{P}_{k-2}(T;\mathbb{R}^d)\cap\ker(\cdot\boldsymbol{x})).
\end{align}
Set $V_{0,0}(T;\mathbb{R}^d) := \mathbb{P}_0(T;\mathbb{R}^d)$, corresponding to the case $k=1$ and $m=1$, which is not uniquely determined by the DoFs \eqref{div-dof}.
For $k\geq1$ and $m\geq1$, let $Q_T^{\rm div}: L^2(T; \mathbb{R}^d) \to V_{k-1,m-1}(T;\mathbb R^d)$ be the $L^2$-orthogonal projection operator, and let $Q_h^{\rm div}$ denote its element-wise version with respect to the mesh $\mathcal{T}_h$. As $\mathbb{P}_{k-1}(T;\mathbb{R}^d)\subseteq V_{k-1,m-1}(T;\mathbb R^d)$, we have for any $1\leq s\leq k$ and $T\in\mathcal T_h$ that
\begin{equation}\label{estimateQTdiv}
\|\boldsymbol{v}-Q_h^{\rm div}\boldsymbol{v}\|_{0,T} + h_T|\boldsymbol{v}-Q_h^{\rm div}\boldsymbol{v}|_{1,T} \lesssim h_T^s|\boldsymbol{v}|_{s,T}\quad\forall~\boldsymbol{v}\in H^s(\mathcal{T}_h;\mathbb{R}^d).
\end{equation}

We will adopt tensor-valued $H(\div)$-conforming finite elements for discretizing $\boldsymbol{\sigma}\in H^{-1}(\div\div,\Omega; \mathbb{M})$.
Hereafter, we always assume integers $k\geq1$, $r\geq0$ and $m\geq1$ satisfying the constraint
\begin{equation*}
r=k, k-1; \quad m=k,k+1;\quad r\leq m\leq r+1.
\end{equation*}
Define the shape function space as
$$
\Sigma_{r,k,m}(T;\mathbb{M}):=\mathbb{P}_r(T;\mathbb{M})+\mathbb{P}_{k-1}(T;\mathbb{R}^d)\otimes \boldsymbol{x} + (\boldsymbol{x}\otimes\boldsymbol{x})\mathbb{H}_{m-2}(T).
$$
Indeed, there are three types of shape function spaces in $\Sigma_{r,k,m}(T;\mathbb{M})$:
\begin{align*}	
\Sigma_{k-1,k,k}(T;\mathbb{M})&=\mathbb{R}^d\otimes(\mathbb{P}_{k-1}(T;\mathbb{R}^d)\oplus\boldsymbol{x}\mathbb{H}_{k-1}(T)), \\
\Sigma_{k,k,k}(T;\mathbb{M})&=\mathbb{P}_k(T;\mathbb{M}), \\
\Sigma_{k,k,k+1}(T;\mathbb{M})&=\mathbb{P}_k(T;\mathbb{M})\oplus(\boldsymbol{x}\otimes\boldsymbol{x})\mathbb{H}_{k-1}(T).
\end{align*}
Apparently $\boldsymbol{\tau}\boldsymbol{n}|_F\in\mathbb{P}_{r}(F;\mathbb{R}^d)$ for any $\boldsymbol{\tau}\in\Sigma_{r,k,m}(T;\mathbb{M})$ and $F\in\mathcal{F}(T)$, and $\div\Sigma_{r,k,m}(T;\mathbb{M}) = V_{k-1,m-1}(T;\mathbb R^d)$.

The DoFs for space $\Sigma_{r,k,m}(T;\mathbb{M})$ are given by
\begin{subequations}\label{div-M-dof}
\begin{align}
\label{div-M-dof1}
(\boldsymbol{\tau}\boldsymbol n , \boldsymbol{q})_F, & \quad\boldsymbol{q}\in \mathbb{P}_r(F;\mathbb{R}^d), \, F\in\mathcal{F}(T),\\
\label{div-M-dof2}
(\boldsymbol{\tau}, \boldsymbol{q})_T, & \quad \boldsymbol{q}\in \grad(V_{k-1,m-1}(T;\mathbb R^d)), \\
\label{div-M-dof3}
(\boldsymbol{\tau}, \boldsymbol{q})_T, & \quad  \boldsymbol{q}\in 
\mathbb{R}^d\otimes(\mathbb{P}_{r-1}(T;\mathbb{R}^d)\cap\ker(\cdot\boldsymbol{x})).
\end{align}
\end{subequations}

\begin{lemma}\label{lem:divMunisol}
The DoFs \eqref{div-M-dof} are unisolvent for space $\Sigma_{r,k,m}(T;\mathbb{M})$.
\end{lemma}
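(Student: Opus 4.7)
The plan is to proceed in three stages: verify dimensions, reduce the vanishing of the DoFs to $\boldsymbol{\tau}\boldsymbol{n}|_{\partial T}=0$ together with $\div\boldsymbol{\tau}=0$, and then close the argument case by case for the three admissible $(r,k,m)$ triples. The dimension count is a direct calculation from the three explicit expressions for $\Sigma_{r,k,m}(T;\mathbb{M})$ given just before the lemma, together with $\dim\grad V_{k-1,m-1}(T;\mathbb{R}^d)$ and $\dim\bigl(\mathbb{R}^d\otimes(\mathbb{P}_{r-1}(T;\mathbb{R}^d)\cap\ker(\cdot\boldsymbol{x}))\bigr)$; in each admissible case the total number of DoFs in \eqref{div-M-dof} equals $\dim\Sigma_{r,k,m}(T;\mathbb{M})$.

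For the reduction, suppose all DoFs vanish on some $\boldsymbol{\tau}\in\Sigma_{r,k,m}(T;\mathbb{M})$. Since $\boldsymbol{\tau}\boldsymbol{n}|_F\in\mathbb{P}_r(F;\mathbb{R}^d)$, \eqref{div-M-dof1} immediately yields $\boldsymbol{\tau}\boldsymbol{n}|_{\partial T}=0$. Using $\div\boldsymbol{\tau}\in V_{k-1,m-1}(T;\mathbb{R}^d)$, integration by parts gives
\begin{equation*}
\|\div\boldsymbol{\tau}\|_{0,T}^{2}=-(\boldsymbol{\tau},\grad\div\boldsymbol{\tau})_T+(\boldsymbol{\tau}\boldsymbol{n},\div\boldsymbol{\tau})_{\partial T}.
\end{equation*}
The boundary term vanishes by the previous step, and the volume term vanishes by \eqref{div-M-dof2} with test tensor $\grad\div\boldsymbol{\tau}\in\grad V_{k-1,m-1}(T;\mathbb{R}^d)$; hence $\div\boldsymbol{\tau}=0$.

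When $r=m=k$ or $r=k-1,\ m=k$, the space $V_{k-1,m-1}(T;\mathbb{R}^d)$ reduces to $\mathbb{P}_{k-1}(T;\mathbb{R}^d)$, so $\grad V_{k-1,m-1}$ decouples row by row, and $\Sigma_{r,k,m}(T;\mathbb{M})$ is itself a row-wise tensor product of $\mathbb{P}_k(T;\mathbb{R}^d)$ (BDM case) or of the RT space $\mathbb{P}_{k-1}(T;\mathbb{R}^d)\oplus\boldsymbol{x}\mathbb{H}_{k-1}(T)$ (RT case). Row by row, the DoFs extracted from \eqref{div-M-dof} coincide with the scalar DoFs in \eqref{div-dof} for the corresponding BDM or RT element, and the classical scalar unisolvence forces $\boldsymbol{\tau}=0$.

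The main obstacle is the genuinely new case $r=k$, $m=k+1$, where the rank-one term $(\boldsymbol{x}\otimes\boldsymbol{x})q$ couples all rows through a single $q\in\mathbb{H}_{k-1}(T)$. I would decompose $\boldsymbol{\tau}=\boldsymbol{\tau}_0+(\boldsymbol{x}\otimes\boldsymbol{x})q$ with $\boldsymbol{\tau}_0\in\mathbb{P}_k(T;\mathbb{M})$ and apply Euler's identity to compute
\begin{equation*}
\div((\boldsymbol{x}\otimes\boldsymbol{x})q)=(d+k)\,q\boldsymbol{x}.
\end{equation*}
Since $\div\boldsymbol{\tau}_0\in\mathbb{P}_{k-1}(T;\mathbb{R}^d)$ while $q\boldsymbol{x}$ is homogeneous of degree $k$, the identity $\div\boldsymbol{\tau}_0+(d+k)q\boldsymbol{x}=0$ forces $q=0$ by comparing homogeneous components. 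What remains is precisely the BDM setting for $\boldsymbol{\tau}_0$, which has already been handled in the preceding paragraph, completing the proof.
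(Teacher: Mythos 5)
Your proof is correct and follows essentially the same route as the paper: count degrees of freedom, deduce $\boldsymbol{\tau}\boldsymbol{n}|_{\partial T}=0$ from \eqref{div-M-dof1}, obtain $\div\boldsymbol{\tau}=0$ from \eqref{div-M-dof2} via integration by parts, and then reduce to the BDM/RT unisolvence after eliminating the $(\boldsymbol{x}\otimes\boldsymbol{x})\mathbb{H}_{k-1}(T)$ contribution. The paper extracts $q=0$ by invoking the direct-sum decomposition $\div\Sigma_{r,k,m}(T;\mathbb{M})=\mathbb{P}_{k-1}(T;\mathbb{R}^d)\oplus\boldsymbol{x}\mathbb{H}_{k-1}(T)$, whereas you make the same point explicit through Euler's identity and a degree comparison; both are the same observation written differently.
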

\begin{proof}
When $m=k$, space $\Sigma_{r,k,m}(T;\mathbb{M})$ and DoFs \eqref{div-M-dof} form the tensor-valued counterpart of the BDM element \cite{BrezziDouglasMarini1985,BrezziDouglasDuranFortin1987,nedelec1986new} and the RT element~\cite{RaviartThomas1977,Nedelec1980}. 
We only prove the unisolvence for the case $r=k$ and $m=k+1$.

By comparing DoFs \eqref{div-M-dof} with the DoFs of the BDM element, we see that the number of DoFs \eqref{div-M-dof} equals $\dim\mathbb{P}_k(T;\mathbb{M})+\dim\mathbb{H}_{k-1}(T)=\dim\Sigma_{r,k,m}(T;\mathbb{M})$. 

Assume $\boldsymbol{\tau}\in\Sigma_{r,k,m}(T;\mathbb{M})$ and all the DoFs \eqref{div-M-dof} vanish. The vanishing DoF~\eqref{div-M-dof1} implies $(\boldsymbol{\tau}\boldsymbol{n})|_{\partial T}=0$. This combined with the vanishing DoF \eqref{div-M-dof2} and integration by parts gives $\div\boldsymbol{\tau}=0$. Then by $\div\Sigma_{r,k,m}(T;\mathbb{M}) = \mathbb{P}_{k-1}(T;\mathbb{R}^d)\oplus\boldsymbol{x}\mathbb{H}_{k-1}(T)$, it holds $\boldsymbol{\tau}\in\mathbb{P}_{k}(T;\mathbb{M})$. Finally, we end the proof by applying the unisolvence of the BDM element.
\end{proof}

The global finite element space for tensors is defined as
\begin{align*}
\Sigma_{r,k,m,h}^{\div}:=\{\boldsymbol{\tau}\in L^{2}(\Omega;\mathbb{M})&:\boldsymbol{\tau}|_T\in \Sigma_{r,k,m}(T;\mathbb{M})~\textrm{for}~\textrm{each}~T\in\mathcal{T}_h; \\
&\qquad\quad\,\textrm{the~DoF \eqref{div-M-dof1} is single-valued}\}. 
\end{align*}
We have $\Sigma_{r,k,m,h}^{\div}\subset H(\div,\Omega;\mathbb{M})$. 

\subsection{$H^1$-nonconforming virtual elements and some inequalities.}
For integers $k\geq1$ and $m=k,k+1$, the shape function space for the $H^1$-nonconforming virtual element (cf. \cite{ncvem2016,ChenHuangncvem2020}) is defined as
\begin{equation*}
V_{k,m}^{\rm VE}(T)=\{v\in H^1(T):\Delta v\in\mathbb{P}_{m-2}(T), \partial_n v|_F\in\mathbb{P}_{k-1}(F) \textrm{ for } F\in\mathcal{F}(T)\}.
\end{equation*}
Obviously $\mathbb{P}_{k}(T)\subseteq V_{k,m}^{\rm VE}(T)$, $V^{\rm VE}_{1,1}(T)=\mathbb{P}_{1}(T)$, and $V^{\rm VE}_{1,2}(T)=\mathbb{P}_{1}(T)\oplus\textrm{span}\{|\boldsymbol{x}|^2\}$, where $\mathrm{span}\{|\boldsymbol{x}|^2\}$ denotes the one-dimensional space spanned by $|\boldsymbol{x}|^2 = \boldsymbol{x} \cdot \boldsymbol{x}$.
The DoFs for $V_{k,m}^{\rm VE}(T)$ are given by
\begin{subequations}\label{ve-dof}
\begin{align}
\label{ve-dof1}
\dfrac{1}{|F|}(v,q)_F, & \quad q\in \mathbb{P}_{k-1}(F), \, F\in\mathcal{F}(T),\\
\label{ve-dof2}
\dfrac{1}{|T|}(v,q)_T, & \quad q\in \mathbb{P}_{m-2}(T).
\end{align}
\end{subequations}
For $k=1$ and $m=1$, it is exactly the Crouzeix--Raviart (CR) element \cite{CR1973}. 
For $k=1$ and $m=2$, it is the enriched Crouzeix-Raviart element \cite{HuMa2015,HuHuangLin2014}.

We first establish the norm equivalence \eqref{ve-infsup2} for the gradient of virtual element functions, which is crucial for stabilization-free virtual element methods \cite{ChenHuangWei2024vem}.

\begin{lemma}
For $T\in\mathcal{T}_h$,
it holds the norm equivalence
\begin{equation}
\label{ve-infsup2}
\|Q_{T}^{\rm div}\nabla v\|_{0,T}\eqsim\|\nabla v\|_{0,T} \quad \forall\, v\in V_{k,m}^{\rm VE}(T).
\end{equation}
\end{lemma}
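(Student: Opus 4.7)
The contraction $\|Q_T^{\rm div}\nabla v\|_{0,T}\leq\|\nabla v\|_{0,T}$ is immediate from the definition of the $L^2$-orthogonal projection $Q_T^{\rm div}$, so the content of the lemma lies in the reverse inequality. The plan is to exhibit a test field $\boldsymbol q\in V_{k-1,m-1}(T;\mathbb R^d)$ that matches both the Neumann trace and the Laplacian of $v$; a single integration by parts then identifies $(\boldsymbol q,\nabla v)_T$ with $\|\nabla v\|_{0,T}^2$, and the projection property $(\boldsymbol q,\nabla v)_T=(\boldsymbol q,Q_T^{\rm div}\nabla v)_T$ combined with Cauchy-Schwarz finishes the job as soon as $\boldsymbol q$ is bounded by $\|\nabla v\|_{0,T}$.

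The natural candidate is $\boldsymbol q:=I_T^{\rm div}(\nabla v)$, the canonical BDM/RT interpolant associated with the DoFs \eqref{div-dof}. Two consequences of the defining properties of $V_{k,m}^{\rm VE}(T)$ are crucial here. Since $\partial_n v|_F\in\mathbb P_{k-1}(F)$ lives in the space uniquely determined by the face DoFs, we have $\boldsymbol q\cdot\boldsymbol n|_F=\partial_n v|_F$; and since $\Delta v\in\mathbb P_{m-2}(T)$, the standard commuting relation $\div I_T^{\rm div}=Q_{m-2,T}\,\div$ gives $\div\boldsymbol q=\Delta v$. Integration by parts then produces
\[
(\boldsymbol q,\nabla v)_T=-(\div\boldsymbol q,v)_T+(\boldsymbol q\cdot\boldsymbol n,v)_{\partial T}=-(\Delta v,v)_T+(\partial_n v,v)_{\partial T}=\|\nabla v\|_{0,T}^2,
\]
and since $\boldsymbol q\in V_{k-1,m-1}(T;\mathbb R^d)$, the $L^2$-projection gives $(\boldsymbol q,\nabla v)_T=(\boldsymbol q,Q_T^{\rm div}\nabla v)_T$. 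Combining these,
\[
\|\nabla v\|_{0,T}^2\leq \|\boldsymbol q\|_{0,T}\,\|Q_T^{\rm div}\nabla v\|_{0,T}.
\]

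The main (and really only) obstacle is the $L^2$-stability $\|I_T^{\rm div}(\nabla v)\|_{0,T}\lesssim\|\nabla v\|_{0,T}$. The canonical BDM/RT interpolant is not $L^2$-bounded on generic vector fields, but here $v$ is confined to the finite-dimensional space $V_{k,m}^{\rm VE}(T)$, so $v\mapsto I_T^{\rm div}(\nabla v)$ is a linear map between two finite-dimensional spaces. I would transform to the reference simplex $\hat T$: the map $\hat v\mapsto \|I_{\hat T}^{\rm div}(\nabla\hat v)\|_{0,\hat T}$ is a seminorm on $V_{k,m}^{\rm VE}(\hat T)$ which vanishes on $\mathbb P_0$ and therefore descends to the finite-dimensional quotient $V_{k,m}^{\rm VE}(\hat T)/\mathbb P_0$, while $\hat v\mapsto\|\nabla\hat v\|_{0,\hat T}$ is a norm on that quotient. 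Equivalence of seminorm and norm in finite dimension gives the required bound on $\hat T$, and a standard shape-regular scaling transfers it to $T$ with a constant depending only on the shape-regularity of $\mathcal T_h$. Everything else is routine bookkeeping.
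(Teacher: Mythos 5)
Your proof is correct, and it takes a genuinely different route from the paper's. Both arguments boil down to exhibiting a test field $\boldsymbol q\in V_{k-1,m-1}(T;\mathbb R^d)$ with $(\boldsymbol q,\nabla v)_T\gtrsim\|\nabla v\|_{0,T}^2$ and $\|\boldsymbol q\|_{0,T}\lesssim\|\nabla v\|_{0,T}$, but the two constructions differ. The paper prescribes the DoFs of $\boldsymbol q$ in terms of (scaled) DoFs of $v$, integrates by parts, and then needs two auxiliary scaling-based lemmas for the VEM space -- the inverse inequality \eqref{ve-inv} and the $L^2$ norm equivalence \eqref{ve-normeq} -- to show $(\boldsymbol q,\nabla v)_T$ dominates $\|\nabla v\|_{0,T}^2$. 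Your choice $\boldsymbol q=I_T^{\rm div}(\nabla v)$ exploits the defining PDE structure of $V_{k,m}^{\rm VE}(T)$ more directly: since $\partial_n v|_F\in\mathbb P_{k-1}(F)$ and $\boldsymbol q\cdot\boldsymbol n|_F\in\mathbb P_{k-1}(F)$, the face DoFs force $\boldsymbol q\cdot\boldsymbol n|_F=\partial_n v|_F$ pointwise, and the commuting relation together with $\Delta v\in\mathbb P_{m-2}(T)$ forces $\div\boldsymbol q=\Delta v$, so the integration by parts yields the exact identity $(\boldsymbol q,\nabla v)_T=\|\nabla v\|_{0,T}^2$ with no slack; the only remaining work is the $L^2$-bound for $I_T^{\rm div}$ restricted to $\nabla V_{k,m}^{\rm VE}(T)$. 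That bound you obtain by the standard finite-dimension-plus-scaling argument, which is legitimate here provided one is slightly careful about what "reference element" means for VEM spaces: since $V_{k,m}^{\rm VE}(T)$ is not affine invariant, the reduction must be by \emph{similarity} scaling to a unit-diameter simplex (which does preserve the VEM space and the DoFs of the RT/BDM interpolant), followed by uniformity over the compact family of unit-size shape-regular simplices -- exactly the hidden content of the phrase "by a scaling argument" used repeatedly in the paper. Net effect: your argument has fewer moving parts and produces a cleaner exact identity, at the price of having to establish $L^2$-stability of $I_T^{\rm div}$ on a shape-dependent space; the paper's argument avoids that stability statement but needs to pre-establish the VEM norm equivalence \eqref{ve-normeq}, which the paper also reuses elsewhere.
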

\begin{proof}
The inequality $\|Q_{T}^{\rm div}\nabla v\|_{0,T}\leq\|\nabla v\|_{0,T}$ follows directly from the fact that $Q_{T}^{\rm div}$ is the $L^2$-orthogonal projection operator. Hence, it remains to establish the reverse inequality, i.e., the inf-sup condition
\begin{equation}
\label{ve-infsup1}
\|\nabla v\|_{0,T}\lesssim \sup_{\boldsymbol{w}\in V_{k-1,m-1}(T;\mathbb R^d)}\dfrac{(\boldsymbol{w},\nabla v)_T}{\|\boldsymbol{w}\|_{0,T}} \quad \forall\, v\in V_{k,m}^{\rm VE}(T).
\end{equation}
The norm equivalence \eqref{ve-infsup2} holds for $k=1$, as $Q_{T}^{\rm div}\nabla v=\nabla v$ in this case.

Now we consider the case $k\geq2$. We follow the proof of Lemma 4.4 in~\cite{ChenHuangWei2024vem}. 
Recall that $Q_{m-2,T}$ and $Q_{k-1,F}$ are standard $L^2$-projection operators onto $\mathbb{P}_{m-2}(T)$  and $\mathbb{P}_{k-1}(F)$, respectively.
Without loss of generality, assume $v\in V_{k,m}^{\rm VE}(T)\cap L_0^2(T)$. Then $Q_{m-2,T}v\in \mathbb P_{m-2}(T)\cap L_0^2(T)$.
Based on DoFs \eqref{div-dof}, take $\boldsymbol{w}\in V_{k-1,m-1}(T;\mathbb R^d)$ such that
\begin{align*}
(\boldsymbol{w}\cdot\boldsymbol{n}, q)_F&=h_T^{-1}(v, q)_F  \qquad\qquad\quad\quad\;\;\,\forall\,q\in\mathbb P_{k-1}(F),  F\in\mathcal F(T), \\
(\boldsymbol{w}, \nabla q)_T&=h_T^{-2}(v, q)_T+h_T^{-1}(v, q)_F  \quad\forall\,q\in\mathbb P_{m-2}(T)/\mathbb R, \\
(\boldsymbol{w}, \boldsymbol{q})_T&=0  \qquad\qquad\qquad\qquad\qquad\;\;\;\forall\,\boldsymbol{q}\in  \mathbb{P}_{k-2}(T;\mathbb{R}^d)\cap\ker(\cdot\boldsymbol{x}). 
\end{align*}
Then $(\boldsymbol{w}\cdot\boldsymbol{n})|_{F}=h_T^{-1}Q_{k-1,F}v$ for $F\in\mathcal F(T)$. 
By a scaling argument, we have
\begin{equation}\label{eq:20250104}
\|\boldsymbol{w}\|_{0,T}^2\lesssim h_T^{-2}\|Q_{m-2,T}v\|_{0,T}^2+\sum_{F\in\mathcal F(T)}h_T^{-1}\|Q_{k-1,F}v\|_{0,F}^2.
\end{equation}
Apply integration by parts to get
\begin{align*}
(\boldsymbol{w}, \nabla v)_T&=-(\div\boldsymbol{w}, v)_T + (\boldsymbol{w}\cdot\boldsymbol{n}, v)_{\partial T} \\
&=-(\div\boldsymbol{w}, Q_{m-2,T}v)_T + \sum_{F\in\mathcal F(T)}h_T^{-1}\|Q_{k-1,F}v\|_{0,F}^2 \\
&=h_T^{-2}\|Q_{m-2,T}v\|_{0,T}^2 + \sum_{F\in\mathcal F(T)}h_T^{-1}\|Q_{k-1,F}v\|_{0,F}^2.
\end{align*}
On the other hand, adopting a scaling argument yields
the following inverse inequality
\begin{equation}
\label{ve-inv}
|v|_{1,T}\lesssim h^{-1}_T\|v\|_{0,T} \quad \forall\, v\in V_{k,m}^{\rm VE}(T),
\end{equation}
and the $L^2$ norm equivalence
\begin{equation}
\label{ve-normeq}
\|v\|^2_{0,T}\eqsim \|Q_{m-2,T}v\|^2_{0,T}+\sum_{F\in\mathcal{F}(T)}h_F\|Q_{k-1,F}v\|^2_{0,F} \quad \forall\, v\in V_{k,m}^{\rm VE}(T).
\end{equation}
By \eqref{ve-inv}-\eqref{ve-normeq}, we get
\begin{equation*}
\|\nabla v\|_{0,T}^{2}\lesssim h_T^{-2}\|Q_{m-2,T}v\|_{0,T}^2+\sum_{F\in\mathcal F(T)}h_T^{-1}\|Q_{k-1,F}v\|_{0,F}^2= (\boldsymbol{w}, \nabla v)_T.
\end{equation*}
This together with \eqref{eq:20250104} implies the inf-sup condition \eqref{ve-infsup1}.
\end{proof}

We next present two estimates to be used in the error analysis in the following section.
\begin{lemma}
Let integers $k\geq1$ and $m=k,k+1$.
For $T\in\mathcal{T}_h$,
we have
\begin{equation}
\label{H1ineq}
\sum_{F\in\mathcal{F}(T)}h_F^{-1/2}\|Q_{m-2,T}v-Q_{k-1,F}v\|_{0,F}\lesssim |v|_{1,T} \quad \forall\, v\in V_{k,m}^{\rm VE}(T)
\end{equation}
for $m \geq 2$, and
\begin{equation}
\label{H2ineq}
\sum_{F\in\mathcal{F}(T)}h_F^{-1/2}\|Q_{m-2,T}v-Q_{k-1,F}v\|_{0,F}\lesssim h_T|Q_{T}^{\rm div}\nabla v|_{1,T} \quad \forall\, v\in V_{k,m}^{\rm VE}(T)
\end{equation}
for $m \geq 3$.
\end{lemma}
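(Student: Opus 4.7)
Both inequalities reduce to bounding $\|v-Q_{m-2,T}v\|_{0,F}$ on each face. The constraint $m\le k+1$ gives $Q_{m-2,T}v|_F\in\mathbb{P}_{m-2}(F)\subseteq\mathbb{P}_{k-1}(F)$, hence $Q_{m-2,T}v-Q_{k-1,F}v\in\mathbb{P}_{k-1}(F)$. Testing $\|Q_{m-2,T}v-Q_{k-1,F}v\|_{0,F}^2$ against itself and writing $Q_{m-2,T}v-Q_{k-1,F}v=(Q_{m-2,T}v-v)+(v-Q_{k-1,F}v)$, the second piece integrates to zero since $v-Q_{k-1,F}v\perp\mathbb{P}_{k-1}(F)$; Cauchy--Schwarz on the remaining term delivers
\begin{equation*}
\|Q_{m-2,T}v-Q_{k-1,F}v\|_{0,F}\le\|v-Q_{m-2,T}v\|_{0,F}.
\end{equation*}

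For \eqref{H1ineq} with $m\ge 2$, I combine the scaled trace inequality $\|w\|_{0,F}^2\lesssim h_T^{-1}\|w\|_{0,T}^2+h_T|w|_{1,T}^2$ applied to $w=v-Q_{m-2,T}v$ with the Poincar\'e inequality (justified by $(v-Q_{m-2,T}v,1)_T=0$) and the elementwise $H^1$-stability of $Q_{m-2,T}$ (so that $|v-Q_{m-2,T}v|_{1,T}\lesssim|v|_{1,T}$). This produces $\|v-Q_{m-2,T}v\|_{0,F}\lesssim h_T^{1/2}|v|_{1,T}$, and summing over the faces with $h_F\eqsim h_T$ yields \eqref{H1ineq}.

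For \eqref{H2ineq} the projector $Q_{m-2,T}$ now reproduces all linear polynomials (since $m-2\ge 1$), so a Bramble--Hilbert refinement of the preceding trace-plus-Poincar\'e argument -- insert an auxiliary linear polynomial $\pi_1 v$ with $\|v-\pi_1 v\|_{l,T}\lesssim h_T^{2-l}|v|_{2,T}$ for $l=0,1$ and use $Q_{m-2,T}\pi_1 v=\pi_1 v$ -- upgrades the bound to
\begin{equation*}
\|v-Q_{m-2,T}v\|_{0,F}\lesssim h_T^{3/2}|v|_{2,T},
\end{equation*}
where $v\in H^2(T)$ follows from elliptic regularity on the convex simplex $T$ because $\Delta v\in\mathbb{P}_{m-2}(T)$ and $\partial_n v$ is piecewise polynomial on $\partial T$. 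Summing over faces gives $\sum_F h_F^{-1/2}\|v-Q_{m-2,T}v\|_{0,F}\lesssim h_T|v|_{2,T}$, so \eqref{H2ineq} reduces to the seminorm equivalence
\begin{equation*}
|v|_{2,T}\lesssim|Q_T^{\rm div}\nabla v|_{1,T}\qquad\forall\,v\in V_{k,m}^{\rm VE}(T),
\end{equation*}
which is the main obstacle.

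To prove this last bound, I exploit that $V_{k,m}^{\rm VE}(T)$ is finite-dimensional (unisolvence of \eqref{ve-dof}) and that the two seminorms $v\mapsto|v|_{2,T}$ and $v\mapsto|Q_T^{\rm div}\nabla v|_{1,T}$ have the same kernel, namely $\mathbb{P}_1(T)$. The inclusion $\mathbb{P}_1(T)\subseteq\ker$ of both is immediate; for the converse applied to the second seminorm, if $Q_T^{\rm div}\nabla v=\boldsymbol{c}$ is a constant vector, then since constants lie in $V_{k-1,m-1}(T;\mathbb{R}^d)$ we have $Q_T^{\rm div}\nabla(v-\boldsymbol{c}\cdot\boldsymbol{x})=\boldsymbol{0}$, so the already-proved norm equivalence \eqref{ve-infsup2} forces $\nabla(v-\boldsymbol{c}\cdot\boldsymbol{x})=\boldsymbol{0}$ and hence $v\in\mathbb{P}_1(T)$. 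Finite-dimensional equivalence of seminorms on the quotient $V_{k,m}^{\rm VE}(\hat T)/\mathbb{P}_1(\hat T)$ on the reference simplex, combined with a standard scaling argument (both seminorms scale as $h^{d/2-2}$ times their reference counterparts under affine maps), transfers the inequality to any shape-regular simplex with a constant independent of $h_T$, completing the proof.
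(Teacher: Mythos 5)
Your argument for \eqref{H1ineq} is correct and essentially the same as the paper's: reduce to $\|v-Q_{m-2,T}v\|_{0,F}$, then apply the trace inequality and the $L^2$-projection error estimate for $Q_{m-2,T}$.

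Your argument for \eqref{H2ineq}, however, has a genuine gap. You claim $v\in H^2(T)$ ``by elliptic regularity on the convex simplex $T$,'' and then everything downstream --- the Bramble--Hilbert step $\|v-Q_{m-2,T}v\|_{0,F}\lesssim h_T^{3/2}|v|_{2,T}$ and the finite-dimensional seminorm equivalence $|v|_{2,T}\lesssim|Q_T^{\rm div}\nabla v|_{1,T}$ --- requires $|v|_{2,T}$ to be finite on all of $V_{k,m}^{\rm VE}(T)$. But the paper explicitly warns, in the remark immediately following this lemma, that $V_{k,m}^{\rm VE}(T)$ is \emph{not} necessarily a subspace of $H^2(T)$: the Neumann data $\partial_n v|_F\in\mathbb{P}_{k-1}(F)$ is prescribed independently on each face of the simplex with no compatibility linking the faces at vertices, and the resulting Neumann problem need not have an $H^2$ solution even though $T$ is convex. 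Consequently $|v|_{2,T}$ may be infinite for some $v\in V_{k,m}^{\rm VE}(T)$, and your chain of inequalities collapses --- this is precisely why the lemma is phrased with $|Q_T^{\rm div}\nabla v|_{1,T}$ rather than $|v|_{2,T}$. The paper avoids $H^2(T)$ entirely by a duality argument: it constructs a test function $\boldsymbol{w}\in V_{k-1,m-1}(T;\mathbb R^d)$ whose face normal traces recover $h_F^{-1}(Q_{m-2,T}v-Q_{k-1,F}v)$ and whose interior DoFs \eqref{div-dof2} vanish, so that integration by parts together with \eqref{eq:divDoFintfullk} converts the target quantity into $(\boldsymbol{w},Q_{m-3,T}\nabla v-Q_T^{\rm div}\nabla v)_T$; the $L^2$-projection error estimate for $Q_{m-3,T}$ is then applied to the \emph{polynomial} $Q_T^{\rm div}\nabla v$ (which is automatically smooth), not to $v$ itself. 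The hypothesis $m\geq 3$ is exactly what makes $Q_{m-3,T}$ a nontrivial projection so that this final estimate is available.
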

\begin{proof}
The inequality \eqref{H1ineq} is a direct consequence of 
\begin{equation*}
\|Q_{m-2,T}v-Q_{k-1,F}v\|_{0,F}=\|Q_{k-1,F}(Q_{m-2,T}v-v)\|_{0,F}\leq \|Q_{m-2,T}v-v\|_{0,F},
\end{equation*}
the trace inequality and the error estimate of the $L^2$-projection $Q_{m-2,T}v$.

Next we prove the inequality \eqref{H2ineq} for $m\geq3$. For $v\in V_{k,m}^{\rm VE}(T)$, take $\boldsymbol{w}\in V_{k-1,m-1}(T;\mathbb R^d)$ such that all the DoFs \eqref{div-dof} vanish except
\begin{align*}
(\boldsymbol{w}\cdot\boldsymbol{n}, q)_F&=h^{-1}_F(Q_{m-2,T}v-Q_{k-1,F}v,q)_F  \quad\;\;\,\forall\,q\in\mathbb P_{k-1}(F),  F\in\mathcal F(T). 
\end{align*}
Then $(\boldsymbol{w}\cdot\boldsymbol{n})|_{F}=h^{-1}_F(Q_{m-2,T}v-Q_{k-1,F}v)|_F$ for $F\in\mathcal F(T)$. 
By a scaling argument, we have
\begin{equation}
\label{H2ineq-pf1}
\|\boldsymbol{w}\|_{0,T}\lesssim \sum_{F\in\mathcal{F}(T)}h_F^{-1/2}\|Q_{m-2,T}v-Q_{k-1,F}v\|_{0,F}.
\end{equation}
On the other side, by the vanishing DoF \eqref{div-dof2} of $\boldsymbol{w}$ and \eqref{eq:divDoFintfullk}, it follows that
\begin{equation*}
(\boldsymbol{w},Q_{m-3,T}\nabla v)_T=(\boldsymbol{w},\nabla Q_{m-2,T}v)_T=0.
\end{equation*}
Apply integration by parts to acquire
\begin{align*}
(\boldsymbol{w}, Q_{m-3,T}\nabla v-Q_{T}^{\rm div}\nabla v)_T&=(\boldsymbol{w}, \nabla(Q_{m-2,T}v-v))_T
=(\boldsymbol{w}\cdot\boldsymbol{n}, Q_{m-2,T}v-v)_{\partial T} \\
& =\sum_{F\in\mathcal{F}(T)}h^{-1}_F\|Q_{m-2,T}v-Q_{k-1,F}v\|^2_{0,F}.
\end{align*}
Then
\begin{equation*}
\sum_{F\in\mathcal{F}(T)}h^{-1}_F\|Q_{m-2,T}v-Q_{k-1,F}v\|^2_{0,F}\leq\|Q_{m-3,T}\nabla v-Q_{T}^{\rm div}\nabla v\|_{0,T}\|\boldsymbol{w}\|_{0,T}.
\end{equation*}
This combined with \eqref{H2ineq-pf1} yields
\begin{equation*}
\sum_{F\in\mathcal{F}(T)}h_F^{-1/2}\|Q_{m-2,T}v-Q_{k-1,F}v\|_{0,F}\lesssim \|Q_{T}^{\rm div}\nabla v-Q_{m-3,T}(Q_{T}^{\rm div}\nabla v)\|_{0,T}.
\end{equation*}
Therefore, \eqref{H2ineq} holds from the error estimate of the $L^2$-projector $Q_{m-3,T}$. 
\end{proof}

Due to the regularity of the Neumann problem employed within the virtual element space $ V_{k,m}^{\rm VE}(T) $, the space $ V_{k,m}^{\rm VE}(T) $ is not necessarily a subspace of $ H^2(T) $. Consequently, we use $ |Q_{T}^{\rm div} \nabla v|_{1,T} $ instead of $ |v|_{2,T} $ in the right-hand side of \eqref{H2ineq}.

Next we define the global virtual element space $\mathring{V}_{k,m,h}^{\rm VE}$ by
\begin{equation}\label{eq:VEMspace}
\begin{aligned}
\mathring{V}_{k,m,h}^{\rm VE} &= \{v\in L^2(\Omega):v|_T\in V_{k,m}^{\rm VE}(T) \textrm{ for each }T\in\mathcal{T}_h; \textrm{ DoF \eqref{ve-dof1} is } \\
&\qquad\quad\;\;\textrm{ single-valued across each face in $\mathring{\mathcal{F}}_h$, and vanish on $\partial\Omega$}\}.
\end{aligned}
\end{equation}
The virtual element space $\mathring{V}_{k,m,h}^{\rm VE}$ has the weak continuity
\begin{equation}\label{weak-c}
([\![v]\!],q)_F = 0 \quad \forall\, v\in\mathring{V}_{k,m,h}^{\rm VE}, q\in\mathbb{P}_{k-1}(F), F\in\mathcal{F}_h.
\end{equation}
Equip the space $\mathring{V}_{k,m,h}^{\rm VE}$ with the discrete $H^2$ seminorm
\begin{equation*}
{\interleave v\interleave}_{2,h}^2:=|Q_h^{\rm div}\nabla_h v|^2_{1,h}+\sum_{F\in\mathcal{F}_h}h_F^{-1}\|[\![Q_h^{\rm div}\nabla_hv]\!]\|^2_{0,F}.
\end{equation*}
By the broken Poincar\'e inequality \cite[(1.8)]{MR1974504} and the norm equivalence \eqref{ve-infsup2}, 
${\interleave \cdot\interleave}_{2,h}$ is a norm on the virtual
element space $\mathring{V}_{k,m,h}^{\rm VE}$.
When $k=1$ and $m=1,2$, we have
\begin{equation*}
{\interleave v\interleave}_{2,h}^2=|v|^2_{2,h}+\sum_{F\in\mathcal{F}_h}h_F^{-1}\|[\![\nabla_hv]\!]\|^2_{0,F} \quad\forall\,v\in\mathring{V}_{1,m,h}^{\rm VE}.
\end{equation*}

\begin{lemma}
For $k=1$ and $m=1,2$, it holds
\begin{equation}
\label{cr-jump}
\sum_{F\in\mathcal{F}_h}h^{-1}_F\|[\![v_h]\!]\|_{0,F}^2\lesssim \min\{|v_h|^{2}_{1,h}, h^{2}{\interleave v_h\interleave}_{2,h}^{2}\}  \quad\;\; \forall\, v_h\in \mathring{V}_{1,m,h}^{\rm VE}.
\end{equation}
\end{lemma}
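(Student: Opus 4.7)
The plan is to prove both inequalities separately while relying on a single common ingredient. Since $k=1$, the weak continuity \eqref{weak-c} with $q=1\in\mathbb{P}_0(F)$ forces $\int_F[\![v_h]\!]\ds=0$ on every interior face, and the vanishing boundary DoF \eqref{ve-dof1} forces $\int_F v_h\ds=0$ on every boundary face. Because $v_h|_T\in\mathbb{P}_1(T)$ when $m=1$ and $v_h|_T\in\mathbb{P}_1(T)\oplus\mathrm{span}\{|\boldsymbol{x}|^2\}\subset\mathbb{P}_2(T)$ when $m=2$, the trace $[\![v_h]\!]|_F$ is a polynomial with zero mean on $F$, so a scaled Poincaré inequality on the reference face yields
\begin{equation*}
\|[\![v_h]\!]\|_{0,F}\lesssim h_F\|\nabla_F[\![v_h]\!]\|_{0,F}.
\end{equation*}
The two bounds will then follow by majorizing $\nabla_F[\![v_h]\!]$ in two different ways.

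For the bound by $|v_h|^2_{1,h}$, I would estimate the tangential gradient one side at a time,
\begin{equation*}
\|\nabla_F[\![v_h]\!]\|_{0,F}\leq\|(\nabla v_h)|_{T^+}\|_{0,F}+\|(\nabla v_h)|_{T^-}\|_{0,F},
\end{equation*}
and then apply the polynomial trace inequality $\|\nabla v_h\|^2_{0,F}\lesssim h_F^{-1}\|\nabla v_h\|^2_{0,T}$ on each adjacent simplex. Summing $h_F^{-1}\|[\![v_h]\!]\|^2_{0,F}$ over all faces and using the bounded face-to-element adjacency gives $\sum_{F\in\mathcal{F}_h}h_F^{-1}\|[\![v_h]\!]\|^2_{0,F}\lesssim|v_h|^2_{1,h}$, with the analogous single-sided estimate handling boundary faces.

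For the bound by $h^2{\interleave v_h\interleave}^2_{2,h}$, I would use the pointwise identity $\nabla_F[\![v_h]\!]=\Pi_F[\![\nabla_h v_h]\!]$ on each face, so that $\|\nabla_F[\![v_h]\!]\|_{0,F}\leq\|[\![\nabla_h v_h]\!]\|_{0,F}$, and deduce
\begin{equation*}
h_F^{-1}\|[\![v_h]\!]\|^2_{0,F}\lesssim h_F\|[\![\nabla_h v_h]\!]\|^2_{0,F}\leq h^2\cdot h_F^{-1}\|[\![\nabla_h v_h]\!]\|^2_{0,F}.
\end{equation*}
Summing over faces and observing that for $k=1$ and $m\in\{1,2\}$ one has $Q_h^{\rm div}\nabla_h v_h=\nabla_h v_h$ (because $\nabla_h v_h$ already lies in the piecewise space $V_{0,m-1}(T;\mathbb{R}^d)$), the right-hand side matches $h^2{\interleave v_h\interleave}^2_{2,h}$.

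No step in this argument poses a real difficulty; the main thing worth flagging is the identification $Q_h^{\rm div}\nabla_h v_h=\nabla_h v_h$ in the low-order regime under consideration, which is what causes the discrete $H^2$ seminorm ${\interleave\cdot\interleave}_{2,h}$ to collapse onto the familiar piecewise $H^2$-and-gradient-jump seminorm. Once this reduction is made explicit, both estimates are immediate consequences of the zero-mean Poincaré inequality on faces and a polynomial trace inequality on elements.
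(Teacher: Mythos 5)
Your proposal is correct and follows essentially the same route as the paper: the zero mean of $[\![v_h]\!]$ on every face (from the weak continuity \eqref{weak-c}, including on boundary faces via the vanishing boundary DoF), a scaled Poincar\'e inequality on each face, and then either a polynomial trace inequality to reach $|v_h|^2_{1,h}$ or the bound $\|\nabla_F[\![v_h]\!]\|_{0,F}\le\|[\![\nabla_hv_h]\!]\|_{0,F}$ together with $Q_h^{\rm div}\nabla_h v_h=\nabla_h v_h$ to reach $h^2{\interleave v_h\interleave}^2_{2,h}$. The only cosmetic difference is that the paper cites the nonconforming jump estimate for the $|v_h|^2_{1,h}$ bound directly, whereas you rederive it explicitly via Poincar\'e plus trace; both are valid, and your reduction $Q_h^{\rm div}\nabla_h v_h=\nabla_h v_h$ is exactly the observation the paper records just before the lemma.
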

\begin{proof}
By the weak continuity \eqref{weak-c}, we have for any $v_h\in\mathring{V}_{1,m,h}^{\rm VE}$ that (cf. \cite[Remark 1.1]{MR1974504} and \cite[Lemma 3.3]{Wang2001})
\begin{align*}
\sum_{F\in\mathcal{F}_h}h^{-1}_F\|[\![v_h]\!]\|_{0,F}^2 &\lesssim |v_h|_{1,h}^2, \\
\sum_{F\in\mathcal{F}_h}h^{-1}_F\|[\![v_h]\!]\|_{0,F}^2&=\sum_{F\in\mathcal{F}_h}h^{-1}_F\|[\![v_h]\!]-Q_{0,F}[\![v_h]\!]\|_{0,F}^2\lesssim \sum_{F\in\mathcal{F}_h}h_F\|[\![\nabla_Fv_h]\!]\|_{0,F}^2\\
&\leq\sum_{F\in\mathcal{F}_h}h_F\|[\![\nabla_h v_h]\!]\|_{0,F}^2\lesssim h^2{\interleave v_h\interleave}_{2,h}^2. 
\end{align*}
Then a combination of the last two inequalities yields \eqref{cr-jump}. 
\end{proof}

\subsection{Interpolation operators}
Let $I^{\rm VE}_{h}: H_0^1(\Omega)\rightarrow\mathring{V}_{k,m,h}^{\rm VE}$ be the global canonical interpolation operator based on the DoFs \eqref{ve-dof}.
By a scaling argument, we have for any $0\leq s\leq k$ and $T\in\mathcal{T}_h$ that (cf. \cite[Lemma 5.3]{ChenHuangncvem2020})
\begin{equation}
\label{ve-int-err}
\|v-I^{\rm VE}_hv\|_{0,T}+h_T|v-I^{\rm VE}_hv|_{1,T}\lesssim h_T^{s+1}|v|_{s+1,T}\quad \forall\,v\in H_0^1(\Omega)\cap H^{s+1}(\mathcal{T}_h).
\end{equation}

In the following, we show the orthogonality property of the interpolation operator $I^{\rm VE}_h$.
\begin{lemma}
Let integers $k\geq1$ and $m=k,k+1$. It holds
\begin{equation}
\label{exchange}
Q_h^{\rm div}\nabla_h(I^{\rm VE}_hv) = Q_h^{\rm div}\nabla v\qquad\forall\,v\in H_0^1(\Omega).
\end{equation}
\end{lemma}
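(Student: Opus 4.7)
The plan is to reduce the identity to an element-local orthogonality statement and then exploit the defining DoFs of $I_h^{\rm VE}$ through integration by parts. Fix $T\in\mathcal{T}_h$ and $v\in H_0^1(\Omega)$. By the definition of $Q_h^{\rm div}$ as the element-wise $L^2$-projection onto $V_{k-1,m-1}(T;\mathbb{R}^d)$, the asserted identity is equivalent to
$$
(\nabla(I_h^{\rm VE}v - v), \boldsymbol{w})_T = 0 \qquad \forall\, \boldsymbol{w}\in V_{k-1,m-1}(T;\mathbb{R}^d),\ T\in\mathcal T_h.
$$
Since $v\in H^1(T)$ and $I_h^{\rm VE}v\in V_{k,m}^{\rm VE}(T)\subset H^1(T)$, integration by parts on $T$ gives
$$
(\nabla(I_h^{\rm VE}v - v), \boldsymbol{w})_T = -(I_h^{\rm VE}v - v,\, \div \boldsymbol{w})_T + \sum_{F\in\mathcal{F}(T)}(I_h^{\rm VE}v - v,\, \boldsymbol{w}\cdot\boldsymbol{n})_F.
$$

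The next step is a polynomial-degree bookkeeping to match the two right-hand-side terms to the DoFs \eqref{ve-dof2} and \eqref{ve-dof1} respectively. Write any $\boldsymbol{w}\in V_{k-1,m-1}(T;\mathbb{R}^d)$ as $\boldsymbol{w}=\boldsymbol{q}_1+\boldsymbol{x}p$ with $\boldsymbol{q}_1\in\mathbb{P}_{k-1}(T;\mathbb{R}^d)$ and $p\in\mathbb{H}_{m-2}(T)$. Then $\div\boldsymbol{w} = \div\boldsymbol{q}_1 + dp + \boldsymbol{x}\cdot\nabla p \in \mathbb{P}_{\max(k-2,m-2)}(T)\subseteq \mathbb{P}_{m-2}(T)$, since $m\in\{k,k+1\}$. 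For each face $F\in\mathcal{F}(T)$ of the simplex, $\boldsymbol{x}\cdot\boldsymbol{n}|_F$ is a constant $c_F$, so $\boldsymbol{w}\cdot\boldsymbol{n}|_F = \boldsymbol{q}_1\cdot\boldsymbol{n}|_F + c_F\, p|_F \in \mathbb{P}_{k-1}(F)+\mathbb{P}_{m-2}(F)\subseteq \mathbb{P}_{k-1}(F)$, where the last inclusion uses $m-2\leq k-1$.

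By the DoFs \eqref{ve-dof2} and \eqref{ve-dof1} defining $I_h^{\rm VE}$, one has $(I_h^{\rm VE}v-v, q)_T=0$ for $q\in\mathbb{P}_{m-2}(T)$ and $(I_h^{\rm VE}v-v, q)_F=0$ for $q\in\mathbb{P}_{k-1}(F)$, $F\in\mathcal{F}(T)$. Substituting $q=\div\boldsymbol{w}$ and $q=\boldsymbol{w}\cdot\boldsymbol{n}|_F$ into the integration-by-parts identity above makes both terms vanish, proving \eqref{exchange} on $T$, and hence globally by summation.

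There is no real obstacle here beyond the polynomial-degree match; the identity is essentially a commuting-diagram property following from how the local DoFs of $V_{k,m}^{\rm VE}(T)$ and of $V_{k-1,m-1}(T;\mathbb{R}^d)$ are calibrated against one another. I should mention that this is precisely the reason why the Lagrange interpolant would fail: its face DoFs would be nodal rather than moment-type of order $k-1$, and hence the boundary contribution $(I_hv-v,\boldsymbol{w}\cdot\boldsymbol{n})_F$ would not automatically cancel against $\boldsymbol{w}\cdot\boldsymbol{n}|_F\in\mathbb{P}_{k-1}(F)$.
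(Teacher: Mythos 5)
Your proof is correct and follows essentially the same route as the paper's: reduce to the element-local orthogonality $(\nabla(I_h^{\rm VE}v-v),\boldsymbol{w})_T=0$, integrate by parts, and let the face and volume DoFs \eqref{ve-dof1}--\eqref{ve-dof2} kill the boundary term (since $\boldsymbol{w}\cdot\boldsymbol{n}|_F\in\mathbb P_{k-1}(F)$) and the interior term (since $\div\boldsymbol{w}\in\mathbb P_{m-2}(T)$). You simply make the polynomial-degree bookkeeping, which the paper leaves implicit, fully explicit; the extra remark about why the Lagrange interpolant fails is a helpful aside but not part of the argument.
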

\begin{proof}
For $T\in\mathcal{T}_h$ and $\boldsymbol{q}\in V_{k-1,m-1}(T;\mathbb R^d)$,
apply integration by parts to get
\begin{align*}
(\nabla(v-I^{\rm VE}_hv),\boldsymbol{q})_T=(v-I^{\rm VE}_hv, \boldsymbol{q}\cdot\boldsymbol{n})_{\partial T}-(v-I^{\rm VE}_hv, \div\boldsymbol{q})_T=0,
\end{align*} 
which implies \eqref{exchange}.
\end{proof}

Let $I^{\div}_{h}:H^1(\Omega;\mathbb{R}^{d})\rightarrow V_{k-1,m-1,h}^{\div}$ be the canonical interpolation operator based on DoFs~\eqref{div-dof}, where 
\begin{equation*}  
V_{k-1,m-1,h}^{\div} := \{ \boldsymbol{v} \in H(\div, \Omega) : \boldsymbol{v}|_T \in V_{k-1,m-1}(T;\mathbb R^d) \ \text{for each} \ T \in \mathcal{T}_h \}.  
\end{equation*}
Next, we present some properties of the interpolation operator $I^{\div}_{h}$.
\begin{lemma}
Let integers $k \geq 1$ and $m \geq 2$, where $m = k$ or $m = k+1$.
For any $\boldsymbol{w}\in H^1(\Omega;\mathbb{R}^{d})$, we have
\begin{align}
\label{BDMint1}
\div I^{\div}_{h}\boldsymbol{w} &= Q_{m-2,h}\div\boldsymbol{w},\\
\label{BDMint2}
(\div\boldsymbol{w},Q_{m-2,h}v_h) &= -(I^{\div}_{h}\boldsymbol{w}, \nabla_hv_h) \qquad\forall\, v_h\in \mathring{V}_{k,m,h}^{\rm VE},
\end{align}
and
\begin{align}
\begin{split}
\label{BDMint3}
&(Q_h^{\rm div}\boldsymbol{w}-I^{\div}_{h}\boldsymbol{w},\nabla_hv_h) \\
=& \sum_{T\in \mathcal{T}_h}\sum_{F\in\mathcal{F}(T)}((Q_h^{\rm div}\boldsymbol{w}-\boldsymbol{w})\cdot\boldsymbol{n},Q_{k-1,F}v_h-Q_{m-2,T}v_h)_{F} \quad\forall\, v_h\in \mathring{V}_{k,m,h}^{\rm VE}.
\end{split}
\end{align}
\end{lemma}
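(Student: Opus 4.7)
\medskip

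\noindent\textbf{Proof plan.} The plan is to treat the three identities in order, letting each one set up the next. For \eqref{BDMint1}, the key observation is that $\div V_{k-1,m-1}(T;\mathbb{R}^d) \subseteq \mathbb{P}_{m-2}(T)$, so $\div I^{\div}_{h}\boldsymbol{w}|_T \in \mathbb{P}_{m-2}(T)$ and it suffices to show $(\div I^{\div}_h\boldsymbol{w},q)_T=(\div\boldsymbol{w},q)_T$ for every $q\in\mathbb{P}_{m-2}(T)$. I would integrate by parts on both sides, observing that $\nabla q\in\nabla\mathbb{P}_{m-2}(T)$ sits exactly in the moment space of DoF \eqref{div-dof2} and $q|_F\in\mathbb{P}_{k-1}(F)$ sits in the moment space of \eqref{div-dof1}, so the DoFs match, yielding \eqref{BDMint1}.

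For \eqref{BDMint2}, I would start from $(\div\boldsymbol{w},Q_{m-2,h}v_h)$, move the projection onto the first argument, and apply \eqref{BDMint1} to replace $Q_{m-2,h}\div\boldsymbol{w}$ by $\div I^{\div}_h\boldsymbol{w}$. Element-wise integration by parts gives $-(I^{\div}_h\boldsymbol{w},\nabla_h v_h)+\sum_T (I^{\div}_h\boldsymbol{w}\cdot\boldsymbol{n},v_h)_{\partial T}$. The face sum reassembles as a jump sum $\sum_{F\in\mathcal{F}_h}(I^{\div}_h\boldsymbol{w}\cdot\boldsymbol{n}_F,[\![v_h]\!])_F$, and since $I^{\div}_h\boldsymbol{w}\cdot\boldsymbol{n}_F\in\mathbb{P}_{k-1}(F)$, each summand vanishes by the weak continuity \eqref{weak-c} of $\mathring{V}_{k,m,h}^{\rm VE}$.

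For \eqref{BDMint3}, set $\boldsymbol{e}:=I^{\div}_h\boldsymbol{w}-Q_h^{\rm div}\boldsymbol{w}\in V_{k-1,m-1,h}^{\div}$. On each $T$ I would integrate by parts and then use $\div\boldsymbol{e}|_T\in\mathbb{P}_{m-2}(T)$ and $\boldsymbol{e}\cdot\boldsymbol{n}|_F\in\mathbb{P}_{k-1}(F)$ to insert the projections $Q_{m-2,T}$ and $Q_{k-1,F}$:
\begin{align*}
(\boldsymbol{e},\nabla v_h)_T
&=-(\div\boldsymbol{e},Q_{m-2,T}v_h)_T+\sum_{F\in\mathcal{F}(T)}(\boldsymbol{e}\cdot\boldsymbol{n},Q_{k-1,F}v_h)_F\\
&=(\boldsymbol{e},\nabla Q_{m-2,T}v_h)_T+\sum_{F\in\mathcal{F}(T)}(\boldsymbol{e}\cdot\boldsymbol{n},Q_{k-1,F}v_h-Q_{m-2,T}v_h)_F,
\end{align*}
where I again integrated by parts and used that $Q_{m-2,T}v_h|_F\in\mathbb{P}_{m-2}(F)\subseteq\mathbb{P}_{k-1}(F)$ (using $m\leq k+1$) so the projection $Q_{k-1,F}$ can be dropped. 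I then need to show $(\boldsymbol{e},\nabla Q_{m-2,T}v_h)_T=0$: the test direction $\nabla Q_{m-2,T}v_h\in\nabla\mathbb{P}_{m-2}(T)$ sits both in the DoF moment space \eqref{div-dof2} (making $I^{\div}_h\boldsymbol{w}$ match $\boldsymbol{w}$) and in $V_{k-1,m-1}(T;\mathbb{R}^d)$ (making $Q_h^{\rm div}\boldsymbol{w}$ match $\boldsymbol{w}$), so both terms in $\boldsymbol{e}$ reproduce $(\boldsymbol{w},\nabla Q_{m-2,T}v_h)_T$ and cancel. Finally, the DoF \eqref{div-dof1} gives $(I^{\div}_h\boldsymbol{w}\cdot\boldsymbol{n},q)_F=(\boldsymbol{w}\cdot\boldsymbol{n},q)_F$ for $q\in\mathbb{P}_{k-1}(F)$, so $\boldsymbol{e}\cdot\boldsymbol{n}$ against such $q$ equals $-(Q_h^{\rm div}\boldsymbol{w}-\boldsymbol{w})\cdot\boldsymbol{n}$ tested against $q$. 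Summing over $T$ and moving the overall sign yields \eqref{BDMint3}.

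The main obstacle to keep in mind is the delicate alignment between the DoFs of $I^{\div}_h$, the projection space defining $Q_h^{\rm div}$, and the polynomial space $\mathbb{P}_{m-2}(T)$ of $Q_{m-2,T}v_h$: the inclusions $\nabla\mathbb{P}_{m-2}(T)\subseteq V_{k-1,m-1}(T;\mathbb{R}^d)$ and $\mathbb{P}_{m-2}(F)\subseteq\mathbb{P}_{k-1}(F)$ both rely on the constraint $m\in\{k,k+1\}$, and without either inclusion the identities would fail to close. Everything else is routine integration by parts and weak continuity.
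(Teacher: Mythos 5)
Your proof is correct and follows essentially the same route as the paper for \eqref{BDMint2} and \eqref{BDMint3}: project, invoke \eqref{BDMint1}, integrate by parts, kill jump terms by weak continuity for \eqref{BDMint2}; for \eqref{BDMint3}, integrate by parts twice on each element, insert $Q_{m-2,T}$ and $Q_{k-1,F}$ using the polynomial types of $\div\boldsymbol{e}$ and $\boldsymbol{e}\cdot\boldsymbol{n}$, then cancel the interior and face contributions of $I^{\div}_h$ and $Q_h^{\rm div}$ against $\boldsymbol{w}$. The only real divergence is \eqref{BDMint1}: the paper simply cites the commuting diagram property from the literature (Arnold--Falk--Winther, Arnold's book), whereas you derive it directly from the DoFs \eqref{div-dof}. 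Your derivation is sound and more self-contained, though it is a standard argument and the paper's citation is the usual economy of exposition. You also spell out explicitly what the paper compresses into ``holds from the definitions of $I^{\div}_h$ and $Q_h^{\rm div}$,'' namely that $\nabla\mathbb{P}_{m-2}(T)$ lies in both the moment space of DoF \eqref{div-dof2} and in $V_{k-1,m-1}(T;\mathbb{R}^d)$, and that $I^{\div}_h\boldsymbol{w}\cdot\boldsymbol{n}$ and $\boldsymbol{w}\cdot\boldsymbol{n}$ have the same $\mathbb{P}_{k-1}(F)$-moments by DoF \eqref{div-dof1}; this is exactly the right unpacking, and your remark that the inclusions $\nabla\mathbb{P}_{m-2}(T)\subseteq V_{k-1,m-1}(T;\mathbb{R}^d)$ and $\mathbb{P}_{m-2}(F)\subseteq\mathbb{P}_{k-1}(F)$ both hinge on $m\in\{k,k+1\}$ correctly identifies the load-bearing constraint.
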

\begin{proof}
The property \eqref{BDMint1} can be found in \cite[Theorem 5.2]{ArnoldFalkWinther2006} and \cite[Page 89]{Arnold2018}.
Employing \eqref{BDMint1}, integration by parts and $I^{\div}_{h} \boldsymbol{w} \in H(\div, \Omega)$, we obtain for $v_h\in\mathring{V}_{k,m,h}^{\rm VE}$ that
\begin{align*}
(\div\boldsymbol{w},Q_{m-2,h}v_h) &= (Q_{m-2,h}\div\boldsymbol{w},v_h) = (\div I^{\div}_{h}\boldsymbol{w},v_h)\\
& =-(I^{\div}_{h}\boldsymbol{w},\nabla_hv_h) + \sum_{F\in\mathcal{F}_h}(I^{\div}_{h}\boldsymbol{w}\cdot\boldsymbol{n}, [\![v_h]\!])_F.
\end{align*}
Hence, \eqref{BDMint2} holds from the weak continuity \eqref{weak-c}.

Set $\boldsymbol{\phi}=Q_h^{\rm div}\boldsymbol{w}-I^{\div}_{h}\boldsymbol{w}$ for simplicity. Applying integration by parts twice, we get for $T\in\mathcal{T}_h$ that
\begin{align*}
(\boldsymbol{\phi},\nabla v_h)_T
&= -(\div\boldsymbol{\phi}, Q_{m-2,T}v_h)_T+\sum_{F\in\mathcal{F}(T)}(\boldsymbol{\phi}\cdot\boldsymbol{n},Q_{k-1,F}v_h)_F\\
&=(\boldsymbol{\phi},\nabla Q_{m-2,T}v_h)_T+\sum_{F\in\mathcal{F}(T)}(\boldsymbol{\phi}\cdot\boldsymbol{n},Q_{k-1,F}v_h-Q_{m-2,T}v_h)_F.
\end{align*}
Then \eqref{BDMint3} holds from the definitions of $I^{\div}_{h}$ and $Q_h^{\rm div}$.
\end{proof}

An $L^2$-bounded commuting projection operator onto the tensor-valued finite element space $\Sigma_{r,k,m,h}^{\div}$ is essential for the robust analysis in the next section. To this end,
let $\widetilde{\Pi}_{h}: L^2(\Omega;\mathbb{M})\rightarrow\Sigma_{k+1,k+1,k+1,h}^{\div}$ denote the tensor-valued counterpart of the $L^2$-bounded commuting projection operator devised in  \cite[(5.2)]{ArnoldGuzman2021}. By \cite[Theorem 3.1]{ArnoldGuzman2021} and a scaling argument,
we have
\begin{align}
\label{tildePihProp1}
&\qquad\quad\;\, \div(\widetilde{\Pi}_{h}\boldsymbol{\tau}) = Q_{k,h}\div\boldsymbol{\tau}\quad\;\forall\,\boldsymbol{\tau}\in H^1(T;\mathbb{M}), T\in\mathcal{T}_h,\\
\label{tildePihProp2}
&\sum_{T\in\mathcal{T}_h}h_T^{-2s}\|\boldsymbol{\tau}-\widetilde{\Pi}_{h}\boldsymbol{\tau}\|_{0,T}^2\lesssim |\boldsymbol{\tau}|_{s}^2\quad\,\forall\,\boldsymbol{\tau}\in H^{s}(\Omega;\mathbb{M}), 0\leq s\leq k+2.
\end{align}
Define the operator $\widehat{\Pi}_{h}: H^1(\mathcal{T}_h;\mathbb{M})\to\mathbb P_{k+1}(\mathcal{T}_h;\mathbb{M})$ as follows: for any $\boldsymbol{\tau}\in H^1(\mathcal{T}_h;\mathbb{M})$ and $T\in\mathcal{T}_h$, $(\widehat{\Pi}_{h}\boldsymbol{\tau})|_T\in \Sigma_{r,k,m}(T;\mathbb{M})$ is the canonical interpolation of $\boldsymbol{\tau}|_T$ based on DoFs \eqref{div-M-dof}. By a scaling argument, we have
\begin{equation}\label{widehatPihProp1}
\|\boldsymbol{\tau}-\widehat{\Pi}_{h}\boldsymbol{\tau}\|_{0,T}\lesssim h_T|\boldsymbol{\tau}|_{1,T}\quad\,\forall\,\boldsymbol{\tau}\in H^1(\mathcal{T}_h;\mathbb{M}), T\in\mathcal{T}_h.
\end{equation}

Using the $L^2$-bounded commuting projection operator $\widetilde{\Pi}_{h}$ and the interpolation operator $\widehat{\Pi}_{h}$, we define an $L^2$-bounded projection operator $\Pi_{h}: L^2(\Omega; \mathbb{M}) \to \Sigma_{r,k,m,h}^{\div}$ as  
$$  
\Pi_{h} \boldsymbol{\tau} := \widehat{\Pi}_{h}(\widetilde{\Pi}_{h} \boldsymbol{\tau})\quad\forall\,\boldsymbol{\tau} \in L^2(\Omega; \mathbb{M}).  
$$  
Since $\widetilde{\Pi}_{h} \boldsymbol{\tau} \in H(\div, \Omega;\mathbb M)$, it follows that $\Pi_{h} \boldsymbol{\tau} \in H(\div, \Omega;\mathbb M)$ as well.

At the end of this section, we present some properties of the operator $\Pi_{h}$.
\begin{lemma}
We have
\begin{align}
\label{div-M-int1}
&\qquad\quad\; \div(\Pi_{h}\boldsymbol{\tau}) = Q_h^{\rm div}\div\boldsymbol{\tau}\quad\;\forall\,\boldsymbol{\tau}\in H^1(T;\mathbb{M}),\\
\label{div-M-int2}
&\sum_{T\in\mathcal{T}_h}h_T^{-2s}\|\boldsymbol{\tau}-\Pi_{h}\boldsymbol{\tau}\|_{0,T}^2\lesssim |\boldsymbol{\tau}|_{s}^2\quad\,\forall\,\boldsymbol{\tau}\in H^{s}(\Omega;\mathbb{M}), 0\leq s\leq r+1.
\end{align}
\end{lemma}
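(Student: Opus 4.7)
The plan is to decompose $\Pi_h = \widehat{\Pi}_h\circ\widetilde{\Pi}_h$ and transfer the commuting identity and approximation bounds of each factor through the composition. The central preliminary observation I would first establish is an elementwise commuting identity for $\widehat{\Pi}_h$: for any $\boldsymbol{\sigma}\in H^1(T;\mathbb M)$,
\begin{equation*}
\div(\widehat{\Pi}_h\boldsymbol{\sigma})|_T = Q_T^{\rm div}\div\boldsymbol{\sigma}.
\end{equation*}
This is a standard integration by parts: for any test $\boldsymbol{q}\in V_{k-1,m-1}(T;\mathbb{R}^d)$, I would write
$(\div(\widehat{\Pi}_h\boldsymbol{\sigma}-\boldsymbol{\sigma}),\boldsymbol{q})_T = -(\widehat{\Pi}_h\boldsymbol{\sigma}-\boldsymbol{\sigma},\grad\boldsymbol{q})_T + ((\widehat{\Pi}_h\boldsymbol{\sigma}-\boldsymbol{\sigma})\boldsymbol{n},\boldsymbol{q})_{\partial T}$.
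The volume term vanishes by DoF~\eqref{div-M-dof2}, and the boundary term vanishes by DoF~\eqref{div-M-dof1} because the trace of $\boldsymbol{x}\mathbb H_{m-2}(T)$ on $F$ lies in $\mathbb P_{m-1}(F;\mathbb R^d)$, so $V_{k-1,m-1}(T;\mathbb R^d)|_F\subset\mathbb P_{m-1}(F;\mathbb R^d)\subset\mathbb P_r(F;\mathbb R^d)$ by the constraint $r\geq m-1$ built into~\eqref{intro:conditionkrm}.

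Given this identity, \eqref{div-M-int1} follows quickly: $\widetilde{\Pi}_h\boldsymbol{\tau}$ is piecewise polynomial, so applying the identity elementwise and chaining with \eqref{tildePihProp1} gives
\begin{equation*}
\div(\Pi_h\boldsymbol{\tau}) = Q_h^{\rm div}\div(\widetilde{\Pi}_h\boldsymbol{\tau}) = Q_h^{\rm div}Q_{k,h}\div\boldsymbol{\tau} = Q_h^{\rm div}\div\boldsymbol{\tau},
\end{equation*}
where the last equality uses $V_{k-1,m-1,h}^{\div}\subset\mathbb P_k(\mathcal T_h;\mathbb R^d)$.

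For \eqref{div-M-int2}, the strategy is the triangle inequality
\begin{equation*}
\|\boldsymbol{\tau}-\Pi_h\boldsymbol{\tau}\|_{0,T}\le\|\boldsymbol{\tau}-\widetilde{\Pi}_h\boldsymbol{\tau}\|_{0,T}+\|(I-\widehat{\Pi}_h)\widetilde{\Pi}_h\boldsymbol{\tau}\|_{0,T}.
\end{equation*}
The first piece is bounded after summation by $|\boldsymbol{\tau}|_s$ via~\eqref{tildePihProp2}, valid since $r+1\le k+1\le k+2$. For the second, I would exploit the polynomial preservation $\widehat{\Pi}_h\boldsymbol{p}=\boldsymbol{p}$ for $\boldsymbol{p}\in\mathbb P_r(T;\mathbb M)\subset\Sigma_{r,k,m}(T;\mathbb M)$ to subtract an arbitrary such $\boldsymbol{p}$, then apply~\eqref{widehatPihProp1} followed by an inverse inequality on the polynomial $\widetilde{\Pi}_h\boldsymbol{\tau}-\boldsymbol{p}\in\mathbb P_{k+1}(T;\mathbb M)$ to reduce to $\|\widetilde{\Pi}_h\boldsymbol{\tau}-\boldsymbol{p}\|_{0,T}$. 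Choosing $\boldsymbol{p}$ as an averaged Taylor polynomial of $\boldsymbol{\tau}$ on $T$ and a further split $\|\widetilde{\Pi}_h\boldsymbol{\tau}-\boldsymbol{p}\|_{0,T}\le\|\widetilde{\Pi}_h\boldsymbol{\tau}-\boldsymbol{\tau}\|_{0,T}+\|\boldsymbol{\tau}-\boldsymbol{p}\|_{0,T}$ control both pieces by $h_T^s|\boldsymbol{\tau}|_{s,T}$ via~\eqref{tildePihProp2} and a standard Bramble--Hilbert bound respectively, yielding~\eqref{div-M-int2} for $0\le s\le r+1$.

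The only non-routine point is the trace inclusion in the first step; it couples the otherwise independent ranges of $r$ and $m$ in \eqref{intro:conditionkrm} and is precisely what makes the DoF pair \eqref{div-M-dof1}--\eqref{div-M-dof2} consistent with the shape function space. Once it is settled, the rest is just composition of facts already established for $\widetilde{\Pi}_h$ and $\widehat{\Pi}_h$.
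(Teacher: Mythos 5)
Your proposal is correct and follows essentially the same route as the paper: for \eqref{div-M-int1} you establish the elementwise commuting identity for $\widehat{\Pi}_h$ via integration by parts and the DoFs (the paper derives the equivalent orthogonality $(\div(\Pi_h\boldsymbol{\tau}-\widetilde{\Pi}_h\boldsymbol{\tau}),\boldsymbol{q})_T=0$ directly), then chain with \eqref{tildePihProp1}; for \eqref{div-M-int2} you use the same triangle inequality, the polynomial invariance of $\widehat{\Pi}_h$ on $\mathbb{P}_r(T;\mathbb{M})$, the bound \eqref{widehatPihProp1}, and an inverse inequality, with the only cosmetic difference being that you subtract an averaged Taylor polynomial where the paper subtracts $Q_{r,h}\boldsymbol{\tau}$.
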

\begin{proof}
Due to the definition of $\Pi_{h}\boldsymbol{\tau}$ and integration by parts,
\begin{equation*}
(\div(\Pi_{h}\boldsymbol{\tau}-\widetilde{\Pi}_{h}\boldsymbol{\tau}), \boldsymbol{q})_T=0\quad\forall\,\boldsymbol{q}\in V_{k-1,m-1}(T;\mathbb R^d), T\in\mathcal{T}_h.
\end{equation*}
This means $\div(\Pi_{h}\boldsymbol{\tau})=Q_h^{\rm div}\div(\widetilde{\Pi}_{h}\boldsymbol{\tau})$. Then \eqref{div-M-int1} holds from \eqref{tildePihProp1}.

Apply the estimate \eqref{widehatPihProp1} of operator $\widehat{\Pi}_{h}$ and the inverse inequality to get
\begin{align*}
\|\Pi_{h}\boldsymbol{\tau}-\widetilde{\Pi}_{h}\boldsymbol{\tau}\|_{0,T}&=\|\widetilde{\Pi}_{h}\boldsymbol{\tau}-Q_{r,h}\boldsymbol{\tau}-\widehat{\Pi}_{h}(\widetilde{\Pi}_{h}\boldsymbol{\tau}-Q_{r,h}\boldsymbol{\tau})\|_{0,T} \\
&\lesssim h_T|\widetilde{\Pi}_{h}\boldsymbol{\tau}-Q_{r,h}\boldsymbol{\tau}|_{1,T}\lesssim \|\widetilde{\Pi}_{h}\boldsymbol{\tau}-Q_{r,h}\boldsymbol{\tau}\|_{0,T}\\
&\lesssim \|\boldsymbol{\tau}-\widetilde{\Pi}_{h}\boldsymbol{\tau}\|_{0,T}+ \|\boldsymbol{\tau}-Q_{r,h}\boldsymbol{\tau}\|_{0,T}.
\end{align*}
Therefore, the estimate \eqref{div-M-int2} follows from \eqref{tildePihProp2} and the estimate of $Q_{r,h}$.
\end{proof}

\section{Robust Mixed Methods}\label{sec4}

In this section, we develop and analyze a family of mixed methods for the fourth-order elliptic singular perturbation problem~\eqref{FSP1}.

\subsection{Mixed methods}
Based on the mixed formulation \eqref{FSP-mix}, we propose the following mixed method for the fourth-order elliptic singular perturbation problem~\eqref{FSP1}:
find $\boldsymbol{\sigma}_h\in 	\Sigma_{r,k,m,h}^{\div}$ and $u_h\in\mathring{V}_{k,m,h}^{\rm VE}$ such that
\begin{subequations}\label{FSP-MVEM}
\begin{align}
\label{FSP-MVEM1}
a(\boldsymbol{\sigma}_h,\boldsymbol{\tau}_h)+b_h(\boldsymbol{\tau}_h,u_h) &= 0 \qquad\qquad\;\, \forall\, \boldsymbol{\tau}_h\in  \Sigma_{r,k,m,h}^{\div}, \\
\label{FSP-MVEM2}
b_h(\boldsymbol{\sigma}_h,v_h)-c_h(u_h,v_h)&= -\langle\!\langle f, v_h\rangle\!\rangle \quad \forall\, v_h\in \mathring{V}_{k,m,h}^{\rm VE},
\end{align}
\end{subequations} 
where
\begin{align}\label{bhch}
& b_h(\boldsymbol{\tau}_h,v_h):=(\div\boldsymbol{\tau}_h, Q_h^{\rm div}\nabla_h v_h),\quad c_h(u_h,v_h):=(Q_h^{\rm div}\nabla_h u_h,Q_h^{\rm div}\nabla_h v_h), \\	
\notag
&\qquad\qquad\qquad\qquad \langle\!\langle f, v_h\rangle\!\rangle = 
\begin{cases}
(f, v_h), \quad &\textrm{for}~k=1, \\
(f, Q_{m-2,h}v_h), \quad &\textrm{for}~k\geq2.
\end{cases}
\end{align}
By the fact that $\div\boldsymbol{\tau}_h$ is in the range of the operator $Q_h^{\rm div}$ for $\boldsymbol{\tau}_h\in \Sigma_{r,k,m,h}^{\div}$, it holds that
\begin{equation*}
b_h(\boldsymbol{\tau}_h,v_h)=(\div\boldsymbol{\tau}_h, \nabla_h v_h)\quad\forall\,\boldsymbol{\tau}_h\in \Sigma_{r,k,m,h}^{\div}, v_h\in H^1(\mathcal{T}_h).	
\end{equation*}
The discrete method \eqref{FSP-MVEM} is a mixed finite element method for $k=1$, and the projector $Q_h^{\rm div}$ in $b_h(\boldsymbol{\tau}_h,v_h)$ and $c_h(u_h,v_h)$ can be omitted. For $k\geq2$,
the discrete method~\eqref{FSP-MVEM} is a stablization-free mixed finite-virtual element method.

Equip the space $\mathring{V}_{k,m,h}^{\rm VE}$ with the discrete parameter-dependent norm
$${\interleave v_h\interleave}^2_{\varepsilon,h}:=\varepsilon^2{\interleave v_h\interleave}^2_{2,h}+|v_h|^2_{1,h}.$$

\begin{lemma}
There holds the discrete inf-sup condition
\begin{equation}
\label{inf-supH2}
{\interleave v_h\interleave}_{2,h}\lesssim\sup_{\boldsymbol{\tau}_h\in \Sigma_{r,k,m,h}^{\div}}\dfrac{(\div\boldsymbol{\tau}_h, \nabla_h v_h)}{\|\boldsymbol{\tau}_h\|_0} \quad \forall\, v_h\in\mathring{V}_{k,m,h}^{\rm VE}.
\end{equation}
\end{lemma}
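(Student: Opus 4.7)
The plan is to verify the inf-sup condition by producing, for each $v_h\in\mathring V_{k,m,h}^{\rm VE}$, an explicit test tensor $\boldsymbol{\tau}_h\in\Sigma_{r,k,m,h}^{\div}$ realizing $(\div\boldsymbol{\tau}_h,\nabla_h v_h)\gtrsim {\interleave v_h\interleave}_{2,h}^{2}$ and $\|\boldsymbol{\tau}_h\|_0\lesssim {\interleave v_h\interleave}_{2,h}$. Set $\boldsymbol{p}_h:=Q_h^{\rm div}\nabla_h v_h$, so that $\boldsymbol{p}_h|_T\in V_{k-1,m-1}(T;\mathbb R^d)$ for every $T$. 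Since $\div\boldsymbol{\tau}_h|_T$ already lies in this space, the projection is absorbed: $(\div\boldsymbol{\tau}_h,\nabla_h v_h)=(\div\boldsymbol{\tau}_h,\boldsymbol{p}_h)$. Element-wise integration by parts, combined with the single-valued normal trace of $\boldsymbol{\tau}_h\in H(\div,\Omega;\mathbb M)$ and the jump convention of the paper, then yields
$$(\div\boldsymbol{\tau}_h,\boldsymbol{p}_h)=-\sum_{T\in\mathcal T_h}(\boldsymbol{\tau}_h,\grad\boldsymbol{p}_h)_T+\sum_{F\in\mathcal F_h}(\boldsymbol{\tau}_h\boldsymbol n_F,[\![\boldsymbol{p}_h]\!])_F,$$
exposing exactly the two ingredients of ${\interleave v_h\interleave}_{2,h}^{2}$ that $\boldsymbol{\tau}_h$ must reproduce.

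I would then construct $\boldsymbol{\tau}_h$ element-wise through the DoFs~\eqref{div-M-dof}. The constraint $r\geq m-1$ built into~\eqref{intro:conditionkrm} guarantees $[\![\boldsymbol{p}_h]\!]|_F\in\mathbb P_r(F;\mathbb R^d)$, so prescribing $(\boldsymbol{\tau}_h\boldsymbol n_F,\boldsymbol q)_F:=h_F^{-1}([\![\boldsymbol{p}_h]\!],\boldsymbol q)_F$ for every $\boldsymbol q\in\mathbb P_r(F;\mathbb R^d)$ forces $\boldsymbol{\tau}_h\boldsymbol n_F=h_F^{-1}[\![\boldsymbol{p}_h]\!]$ on each $F\in\mathcal F_h$; because the right-hand side depends only on the fixed face normal, the trace is single-valued and thus $\boldsymbol{\tau}_h\in\Sigma_{r,k,m,h}^{\div}$. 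For the interior DoFs~\eqref{div-M-dof2} I set $(\boldsymbol{\tau}_h,\boldsymbol q)_T:=-(\grad\boldsymbol{p}_h,\boldsymbol q)_T$ for all $\boldsymbol q\in\grad(V_{k-1,m-1}(T;\mathbb R^d))$, which is legal because $\grad\boldsymbol{p}_h|_T$ itself lies in that test space; the remaining DoFs~\eqref{div-M-dof3} are set to zero. Testing the interior prescription against $\boldsymbol q=\grad\boldsymbol{p}_h$ then gives $(\boldsymbol{\tau}_h,\grad\boldsymbol{p}_h)_T=-|\boldsymbol{p}_h|_{1,T}^{2}$, and substitution into the integration-by-parts identity produces
$$(\div\boldsymbol{\tau}_h,\nabla_h v_h)=|\boldsymbol{p}_h|_{1,h}^{2}+\sum_{F\in\mathcal F_h}h_F^{-1}\|[\![\boldsymbol{p}_h]\!]\|_{0,F}^{2}={\interleave v_h\interleave}_{2,h}^{2}.$$

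It remains to estimate $\|\boldsymbol{\tau}_h\|_0$. Using the unisolvence of~\eqref{div-M-dof} from Lemma~\ref{lem:divMunisol} and a contravariant Piola scaling on the reference simplex, the $L^2$-norm on each element is controlled by the DoF data,
$$\|\boldsymbol{\tau}_h\|_{0,T}^{2}\lesssim h_T\sum_{F\in\mathcal F(T)}\|\boldsymbol{\tau}_h\boldsymbol n\|_{0,F}^{2}+\sup_{\boldsymbol q\in\grad(V_{k-1,m-1}(T;\mathbb R^d))}\frac{|(\boldsymbol{\tau}_h,\boldsymbol q)_T|^{2}}{\|\boldsymbol q\|_{0,T}^{2}},$$
into which my choices feed directly: shape regularity bounds the face term by $\sum_{F\in\mathcal F(T)}h_F^{-1}\|[\![\boldsymbol{p}_h]\!]\|_{0,F}^{2}$, while Cauchy--Schwarz turns the interior supremum into $\|\grad\boldsymbol{p}_h\|_{0,T}^{2}=|\boldsymbol{p}_h|_{1,T}^{2}$. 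Summing over $T$ yields $\|\boldsymbol{\tau}_h\|_0\lesssim {\interleave v_h\interleave}_{2,h}$, and combining with the identity of the preceding paragraph delivers the inf-sup estimate. The main obstacle is securing this DoF-based norm equivalence cleanly: because the interior DoF~\eqref{div-M-dof2} is a moment against the tensor space $\grad(V_{k-1,m-1})$ rather than a canonical polynomial basis, one must split $\boldsymbol{\tau}_h$ into a face-supported lift plus an internal bubble (with vanishing normal trace) and treat each of the three sub-cases $(r,k,m)\in\{(k-1,k,k),(k,k,k),(k,k,k+1)\}$ allowed by~\eqref{intro:conditionkrm} separately before invoking norm equivalence on the finite-dimensional reference element.
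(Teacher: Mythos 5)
Your construction of the test tensor $\boldsymbol{\tau}_h$ via the DoFs \eqref{div-M-dof} (face DoFs set to the scaled jump of $Q_h^{\rm div}\nabla_h v_h$, interior DoFs \eqref{div-M-dof2} set to minus the moments of its piecewise gradient, remaining DoFs zero), the integration-by-parts identity yielding $(\div\boldsymbol{\tau}_h,\nabla_h v_h)={\interleave v_h\interleave}_{2,h}^2$, and the scaling-argument bound $\|\boldsymbol{\tau}_h\|_0\lesssim{\interleave v_h\interleave}_{2,h}$ are precisely the steps in the paper's proof. Your closing remarks about verifying the DoF-based norm equivalence on the reference element through a Piola map are details the paper absorbs into ``by a scaling argument,'' but they do not change the route; the two arguments are essentially the same.
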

\begin{proof}
Let $\boldsymbol{\tau}_h\in\Sigma_{r,k,m,h}^{\div}$ such that all the DoFs \eqref{div-M-dof} vanish except 
\begin{align*}
(\boldsymbol{\tau}_h\boldsymbol{n}, \boldsymbol{q})_F&=h_F^{-1}([\![Q_h^{\rm div}\nabla_h v_h]\!], \boldsymbol{q})_F \quad\quad \forall\, \boldsymbol{q}\in\mathbb{P}_{r}(F;\mathbb{R}^d), \\
(\boldsymbol{\tau}_h,\boldsymbol{q})_T &= -(\grad(Q_h^{\rm div}\nabla_hv_h),\boldsymbol{q})_T \quad \forall\, \boldsymbol{q}\in\grad(V_{k-1,m-1}(T;\mathbb R^d))
\end{align*}
for each $F\in\mathcal{F}_h$ and $T\in\mathcal{T}_h$. By a scaling argument, we have
\begin{align}
\label{inf-supH2-pf2}
\|\boldsymbol{\tau}_h\|^2_0\lesssim|Q_h^{\rm div}\nabla_h v_h|^2_{1,h}+\sum_{F\in\mathcal{F}_h}h_F^{-1}\|[\![Q_h^{\rm div}\nabla_hv_h]\!]\|^2_{0,F}={\interleave v_h\interleave}_{2,h}^2.
\end{align}
Applying integration by parts, we get
\begin{align*}
(\div\boldsymbol{\tau}_h, Q_h^{\rm div}\nabla_h v_h)&=-\sum_{T\in\mathcal{T}_h}(\boldsymbol{\tau}_h, \grad(Q_h^{\rm div}\nabla_h v_h))_T+\sum_{F\in\mathcal{F}_h}(\boldsymbol{\tau}_h\boldsymbol{n},[\![Q_h^{\rm div}\nabla_h v_h]\!])_F\\
& = |Q_h^{\rm div}\nabla_h v_h|^2_{1,h}+\sum_{F\in\mathcal{F}_h}h_F^{-1}\|[\![Q_h^{\rm div}\nabla_hv_h]\!]\|^2_{0,F}={\interleave v_h\interleave}_{2,h}^2,
\end{align*}
which together with \eqref{inf-supH2-pf2} implies \eqref{inf-supH2}.
\end{proof}

\begin{theorem}\label{thm:FSP-MVEM}
The mixed method \eqref{FSP-MVEM} is well-posed.
It holds the discrete stability
\begin{align}
\label{discretestability}
&\varepsilon^{-1}\|\boldsymbol{\sigma}_h\|_0+\interleave u_h\interleave_{\varepsilon,h} \\
\notag
&\quad \lesssim\sup_{\boldsymbol{\tau}_h\in \Sigma_{r,k,m,h}^{\div}, v_h\in \mathring{V}_{k,m,h}^{\rm VE}}
\frac{a(\boldsymbol{\sigma}_h, \boldsymbol{\tau}_h)+b_h(\boldsymbol{\tau}_h, u_h)+b_h(\boldsymbol{\sigma}_h, v_h)-c_h(u_h, v_h)}{\varepsilon^{-1}\|\boldsymbol{\tau}_h\|_0+\interleave v_h\interleave_{\varepsilon,h}}
\end{align}
for any $\boldsymbol{\sigma}_h\in 	\Sigma_{r,k,m,h}^{\div}$ and $u_h\in \mathring{V}_{k,m,h}^{\rm VE}$.
When $k\geq2$, $\div\boldsymbol{\sigma}_h-Q_h^{\rm div}\nabla_h u_h\in H(\div,\Omega)$.
\end{theorem}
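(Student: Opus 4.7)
The plan is to apply the Brezzi--Zulehner framework \cite[Theorem 2.6]{zulehner2011} to the discrete saddle-point system with the parameter-dependent norms $\varepsilon^{-1}\|\cdot\|_0$ on $\Sigma_{r,k,m,h}^{\div}$ and $\interleave\cdot\interleave_{\varepsilon,h}$ on $\mathring{V}_{k,m,h}^{\rm VE}$. The stability estimate \eqref{discretestability} is precisely the discrete inf-sup condition for the combined bilinear form on $\Sigma_{r,k,m,h}^{\div}\times\mathring{V}_{k,m,h}^{\rm VE}$, so well-posedness follows automatically once it is established. To invoke Zulehner's theorem, I would verify two norm equivalences on the product space.

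The first condition, $a(\boldsymbol{\tau}_h,\boldsymbol{\tau}_h)+\sup_{v_h\neq 0}b_h(\boldsymbol{\tau}_h,v_h)^2/\interleave v_h\interleave_{\varepsilon,h}^2 \eqsim \varepsilon^{-2}\|\boldsymbol{\tau}_h\|_0^2$, reduces to the boundedness estimate $b_h(\boldsymbol{\tau}_h,v_h)\lesssim \|\boldsymbol{\tau}_h\|_0\,\interleave v_h\interleave_{2,h}$. I would obtain this by elementwise integration by parts, rewriting $(\div\boldsymbol{\tau}_h,Q_h^{\rm div}\nabla_h v_h)$ as $-\sum_T(\boldsymbol{\tau}_h,\nabla(Q_h^{\rm div}\nabla_h v_h))_T+\sum_F(\boldsymbol{\tau}_h\boldsymbol{n}_F,[\![Q_h^{\rm div}\nabla_h v_h]\!])_F$, and then applying Cauchy--Schwarz together with a discrete trace inequality to match the two contributions of $\interleave v_h\interleave_{2,h}$. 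Combined with $\interleave v_h\interleave_{\varepsilon,h}\geq\varepsilon\interleave v_h\interleave_{2,h}$, this yields the desired sup upper bound; the reverse inequality is trivial since $a(\boldsymbol{\tau}_h,\boldsymbol{\tau}_h)=\varepsilon^{-2}\|\boldsymbol{\tau}_h\|_0^2$.

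For the second condition, $c_h(v_h,v_h)+\sup_{\boldsymbol{\tau}_h\neq 0}b_h(\boldsymbol{\tau}_h,v_h)^2/(\varepsilon^{-2}\|\boldsymbol{\tau}_h\|_0^2) \eqsim \interleave v_h\interleave_{\varepsilon,h}^2$, the norm equivalence \eqref{ve-infsup2} immediately gives $c_h(v_h,v_h)\eqsim|v_h|_{1,h}^2$. The key observation $(\div\boldsymbol{\tau}_h,\nabla_h v_h)=b_h(\boldsymbol{\tau}_h,v_h)$---valid because $\div\boldsymbol{\tau}_h$ is already in the range of $Q_h^{\rm div}$---then recasts the inf-sup condition \eqref{inf-supH2} as $\varepsilon\interleave v_h\interleave_{2,h}\lesssim\sup_{\boldsymbol{\tau}_h}b_h(\boldsymbol{\tau}_h,v_h)/(\varepsilon^{-1}\|\boldsymbol{\tau}_h\|_0)$. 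Adding the two lower bounds recovers $\interleave v_h\interleave_{\varepsilon,h}^2$, while the matching upper bound follows once more from the boundedness derived in the previous paragraph.

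Finally, for the $H(\div)$-conformity of $\boldsymbol{\phi}_h:=\div\boldsymbol{\sigma}_h-Q_h^{\rm div}\nabla_h u_h$ when $k\geq 2$, I plan a DoF-based test-function argument. Note that $\boldsymbol{\phi}_h|_T\in V_{k-1,m-1}(T;\mathbb{R}^d)$ with $\div\boldsymbol{\phi}_h|_T\in\mathbb{P}_{m-2}(T)$ and $\boldsymbol{\phi}_h\cdot\boldsymbol{n}|_F\in\mathbb{P}_{k-1}(F)$, the latter because $(\boldsymbol{x}\cdot\boldsymbol{n})|_F$ is constant on each face $F\subset\partial T$. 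Test \eqref{FSP-MVEM2} with any $v_h\in\mathring{V}_{k,m,h}^{\rm VE}$ whose internal DoFs \eqref{ve-dof2} all vanish; then $Q_{m-2,h}v_h=0$, so $\langle\!\langle f,v_h\rangle\!\rangle=0$, and $b_h(\boldsymbol{\sigma}_h,v_h)-c_h(u_h,v_h)=(\boldsymbol{\phi}_h,\nabla_h v_h)$ since $\boldsymbol{\phi}_h$ is in the range of $Q_h^{\rm div}$. Elementwise integration by parts leaves a volume sum $-\sum_T(\div\boldsymbol{\phi}_h,v_h)_T=-\sum_T(\div\boldsymbol{\phi}_h,Q_{m-2,T}v_h)_T=0$, together with a face sum that---after invoking the weak continuity \eqref{weak-c} and the vanishing boundary DoFs---reduces to $\sum_{F\in\mathring{\mathcal{F}}_h}([\![\boldsymbol{\phi}_h\cdot\boldsymbol{n}_F]\!],Q_{k-1,F}v_h)_F=0$. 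Varying the face DoFs \eqref{ve-dof1} independently forces each interior normal jump of $\boldsymbol{\phi}_h$ to vanish in $\mathbb{P}_{k-1}(F)$, yielding $\boldsymbol{\phi}_h\in H(\div,\Omega)$. The main obstacle is this last step, where the implicit (non-explicit) nature of $\mathring{V}_{k,m,h}^{\rm VE}$ forces the test-function argument to proceed purely through the DoFs rather than via pointwise formulas.
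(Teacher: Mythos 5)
Your proposal is correct and follows essentially the same route as the paper: the Zulehner framework with the same pair of norm equivalences (from the boundedness of $b_h$ via elementwise integration by parts, the inf-sup condition \eqref{inf-supH2}, and the norm equivalence \eqref{ve-infsup2}), followed by the same DoF-based argument for $H(\div)$-conformity obtained by testing \eqref{FSP-MVEM2} with $v_h$ whose interior DoFs \eqref{ve-dof2} vanish. The paper states the final step more tersely, but the details you supply (constancy of $\boldsymbol{x}\cdot\boldsymbol{n}$ on each face, the vanishing volume term via $\div\boldsymbol{\phi}_h|_T\in\mathbb{P}_{m-2}(T)$, and the use of the weak continuity \eqref{weak-c}) are exactly what the paper's terse wording is invoking.
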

\begin{proof}
It follows from integration by parts and the inverse inequality that
\begin{equation*}
b_h(\boldsymbol{\tau}_h, v_h)\lesssim \|\boldsymbol{\tau}_h\|_0{\interleave v_h\interleave}_{2,h}\qquad\forall\,\boldsymbol{\tau}_h\in \Sigma_{r,k,m,h}^{\div}, v_h\in \mathring{V}_{k,m,h}^{\rm VE}.
\end{equation*}
Then by \eqref{ve-infsup2} and the discrete inf-sup condition \eqref{inf-supH2}, we have
\begin{align*}
\varepsilon^{-2}\|\boldsymbol{\tau}_h\|^2_0\leq a(\boldsymbol{\tau}_h, \boldsymbol{\tau}_h)+ \sup_{v\in \mathring{V}_{k,m,h}^{\rm VE}}\frac{b_h^2(\boldsymbol{\tau}_h, v_h)}{{\interleave v_h\interleave}_{\varepsilon,h}^2} \lesssim\varepsilon^{-2}\|\boldsymbol{\tau}_h\|^2_0 \qquad &\forall\, \boldsymbol{\tau}_h\in 	\Sigma_{r,k,m,h}^{\div},\\
{\interleave v_h\interleave}^2_{\varepsilon,h}\lesssim c_h(v_h, v_h) + \sup_{\boldsymbol{\tau}_h\in \Sigma_{r,k,m,h}^{\div}}\frac{b_h^2(\boldsymbol{\tau}_h, v_h)}{\varepsilon^{-2}\|\boldsymbol{\tau}_h\|_0^2} \lesssim{\interleave v_h\interleave}^2_{\varepsilon,h} \qquad &\forall\, v_h\in \mathring{V}_{k,m,h}^{\rm VE}.
\end{align*}
Apply the Zulehner Theory \cite[Theorem 2.6]{zulehner2011} to conclude the discrete stability \eqref{discretestability} and the unisolvence of the mixed method \eqref{FSP-MVEM}.

When $k\geq2$, we can choose $v_h\in \mathring{V}_{k,m,h}^{\rm VE}$ such that $Q_{m-2,h}v_h=0$, i.e. DoF \eqref{ve-dof2} vanishes.
Substituting this $v_h$ into \eqref{FSP-MVEM2}, we derive $\div\boldsymbol{\sigma}_h-Q_h^{\rm div}\nabla_h u_h\in H(\div,\Omega)$ from integration by parts and DoF \eqref{ve-dof1}.
\end{proof}

\begin{remark}\rm
For $k \geq d+1$, symmetric $H(\div)$-conforming tensor-valued finite elements in~\cite{ChenHuang2022div,ChenHuang2024,ArnoldWinther2002,AdamsCockburn2005,ArnoldAwanouWinther2008,HuZhang2016,Hu2015a,HuZhang2015} can be employed to discretize $\boldsymbol{\sigma}$. The constraint $k\geq d+1$ can be partially relaxed by using low-order symmetric tensor-valued finite elements proposed in~\cite{HuangZhangZhouZhu2024}.
\end{remark}

\begin{remark}\rm
Although the solution $\boldsymbol{\sigma}_h$ of the mixed method \eqref{FSP-MVEM} is not necessarily symmetric, its symmetric part $\sym \boldsymbol{\sigma}_h$ still provides an effective approximation of $\boldsymbol{\sigma}$. Notably, $\sym \boldsymbol{\sigma}_h$ is continuous in the normal-normal component, similar to those in~\cite{ChenHuang2025,Hellan1967,Herrmann1967,Johnson1973}.
\end{remark}

\subsection{Error analysis}
We begin with the following estimates of the consistency errors.
\begin{lemma}
Let $(\boldsymbol{\sigma},u)\in H^{-1}(\div\div,\Omega; \mathbb{M})\times H_0^1(\Omega)$ be the solution of problem~\eqref{FSP-mix}. Assume $\boldsymbol{\sigma}\in H^{k+1}(\Omega;\mathbb{M})$ and $u\in H^{k+1}(\Omega)$. We have the consistency error
\begin{equation}
\label{consistencyerr-1}
b_h(\boldsymbol{\sigma},v_h)-c_h(u,v_h)+\langle\!\langle f, v_h\rangle\!\rangle\lesssim h^k(|\boldsymbol{\sigma}|_{k+1}+|u|_{k+1}){|v_h|}_{1,h}\quad \forall\,v_h\in \mathring{V}_{k,m,h}^{\rm VE}.
\end{equation}
If $(k, m)\neq(2,2)$, we have
\begin{equation}
\label{consistencyerr-2}
b_h(\boldsymbol{\sigma},v_h)-c_h(u,v_h)+\langle\!\langle f, v_h\rangle\!\rangle\lesssim h^{k+1}(|\boldsymbol{\sigma}|_{k+1}+|u|_{k+1}){\interleave v_h\interleave}_{2,h} \;\; \forall\,v_h\in \mathring{V}_{k,m,h}^{\rm VE}.
\end{equation}
\end{lemma}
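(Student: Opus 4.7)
The plan is to introduce the auxiliary vector field $\boldsymbol{\phi}:=\div\boldsymbol{\sigma}-\nabla u$, which lies in $H(\div,\Omega;\mathbb{R}^d)$ with $\div\boldsymbol{\phi}=f$ by combining $\boldsymbol{\sigma}=\varepsilon^2\nabla^2 u$ with the second equation in~\eqref{FSP1}, and which satisfies $|\boldsymbol{\phi}|_k\le|\boldsymbol{\sigma}|_{k+1}+|u|_{k+1}$ under the assumed smoothness. Using the self-adjointness of the $L^2$-projection $Q_h^{\rm div}$ together with the fact that $Q_h^{\rm div}\boldsymbol{\phi}$ already lies in $V_{k-1,m-1,h}^{\div}$, one obtains
\[
b_h(\boldsymbol{\sigma},v_h)-c_h(u,v_h)=(Q_h^{\rm div}\boldsymbol{\phi},\nabla_h v_h),
\]
after which the argument splits according to the definition of $\langle\!\langle f,v_h\rangle\!\rangle$.

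For $k=1$, the space $V_{0,m-1,h}^{\div}$ contains all piecewise constants, so $Q_h^{\rm div}\nabla_h v_h=\nabla_h v_h$. Element-wise integration by parts combined with $\div\boldsymbol{\phi}=f$ absorbs the term $(f,v_h)$ and leaves the face residual
\[
b_h(\boldsymbol{\sigma},v_h)-c_h(u,v_h)+(f,v_h)=\sum_{F\in\mathcal F_h}\bigl(\boldsymbol{\phi}\cdot\boldsymbol{n}-Q_{0,F}(\boldsymbol{\phi}\cdot\boldsymbol{n}),[\![v_h]\!]\bigr)_F,
\]
the subtraction of $Q_{0,F}(\boldsymbol{\phi}\cdot\boldsymbol{n})$ being justified by the weak continuity~\eqref{weak-c}. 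Cauchy--Schwarz together with a trace inequality and the standard $L^2$ approximation of $Q_{0,T}\boldsymbol{\phi}$ bounds the tensor factor by $h|\boldsymbol{\phi}|_1$, while \eqref{cr-jump}, used in its $|v_h|_{1,h}$ form or its $h\interleave v_h\interleave_{2,h}$ form, yields \eqref{consistencyerr-1} and \eqref{consistencyerr-2} respectively.

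For $k\ge 2$, one has $\langle\!\langle f,v_h\rangle\!\rangle=(\div\boldsymbol{\phi},Q_{m-2,h}v_h)$, and identity~\eqref{BDMint2} rewrites this as $-(I^{\div}_h\boldsymbol{\phi},\nabla_h v_h)$. Since $I^{\div}_h\boldsymbol{\phi}$ also lies in $V_{k-1,m-1,h}^{\div}$, the self-adjointness of $Q_h^{\rm div}$ lets one exchange this with $-(I^{\div}_h\boldsymbol{\phi},Q_h^{\rm div}\nabla_h v_h)$, so the full consistency error collapses to $(Q_h^{\rm div}\boldsymbol{\phi}-I^{\div}_h\boldsymbol{\phi},\nabla_h v_h)$. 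At this point identity~\eqref{BDMint3} is precisely what is needed: it rewrites the error as
\[
\sum_{T\in\mathcal T_h}\sum_{F\in\mathcal F(T)}\bigl((Q_h^{\rm div}\boldsymbol{\phi}-\boldsymbol{\phi})\cdot\boldsymbol{n},Q_{k-1,F}v_h-Q_{m-2,T}v_h\bigr)_F.
\]
A trace inequality combined with \eqref{estimateQTdiv} controls the tensor factor by $h^k|\boldsymbol{\phi}|_k$, and the test-function factor is controlled by $|v_h|_{1,h}$ through~\eqref{H1ineq} (giving \eqref{consistencyerr-1}) or by $h\interleave v_h\interleave_{2,h}$ through~\eqref{H2ineq} (giving \eqref{consistencyerr-2}). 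The hypothesis $m\ge 3$ of~\eqref{H2ineq} is exactly what forces the exclusion $(k,m)\ne(2,2)$ in the second estimate.

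The main obstacle is bookkeeping rather than analytical: one has to recognize at just the right moment that $I^{\div}_h\boldsymbol{\phi}$ lies in the range of $Q_h^{\rm div}$, so that the cross term can be projected and the resulting identity is matched verbatim by~\eqref{BDMint3}; and one must separate the $k=1$ case, where $\langle\!\langle f,v_h\rangle\!\rangle$ does not filter $v_h$ through $Q_{m-2,h}$, handling the jump residual directly via \eqref{cr-jump}. Once these algebraic observations are made, the already-established tools \eqref{estimateQTdiv}, \eqref{cr-jump}, \eqref{H1ineq}, and \eqref{H2ineq} close both estimates without further technicalities.
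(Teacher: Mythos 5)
Your proof takes essentially the same route as the paper's: you introduce the same auxiliary field ($\boldsymbol{\phi}$ here, $\boldsymbol{w}$ in the paper), split on $k=1$ versus $k\geq 2$, handle $k=1$ by integration by parts, weak continuity, and \eqref{cr-jump}, and handle $k\geq 2$ by \eqref{BDMint2}--\eqref{BDMint3}, \eqref{estimateQTdiv}, and \eqref{H1ineq}/\eqref{H2ineq}. The only slip is a harmless imprecision: $Q_h^{\rm div}\boldsymbol{\phi}$ lies in the broken space $V_{k-1,m-1}^{-1}$, not in the $H(\div)$-conforming $V_{k-1,m-1,h}^{\div}$; your self-adjointness manipulations only need the former, so nothing breaks.
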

\begin{proof}
Set $\boldsymbol{w}=\div\boldsymbol{\sigma}-\nabla u$ for simplicity. Then the second equation of problem~\eqref{FSP1} becomes $\div\boldsymbol{w}=f$.

First consider the case $k=1$, that is $\mathring{V}_{1,m,h}^{\rm VE}$ with $m=1,2$ is a nonconforming finite element space.
Apply integration by parts and the weak continuity \eqref{weak-c} to acquire for $v_h\in\mathring{V}_{1,m,h}^{\rm VE}$ that
\begin{align*}
&\quad\; b_h(\boldsymbol{\sigma},v_h)-c_h(u,v_h)+(f,v_h) \\
&= \sum_{T\in \mathcal{T}_h}\sum_{F\in\mathcal{F}(T)}(\boldsymbol{n}^{\intercal}\boldsymbol{w},v_h)_{F} = \sum_{F\in\mathcal{F}_h}(\boldsymbol{n}^{\intercal}\boldsymbol{w}-Q_{0,F}(\boldsymbol{n}^{\intercal}\boldsymbol{w}), [\![v_h]\!])_{F}\\
&\leq\sum_{F\in\mathcal{F}_h}\|\boldsymbol{n}^{\intercal}\boldsymbol{w}-Q_{0,F}(\boldsymbol{n}^{\intercal}\boldsymbol{w})\|_{0,F}\|[\![v_h]\!]\|_{0,F}.
\end{align*}
By the trace inequality and the error estimate of $Q_{0,T}$,
\begin{equation}\label{Q0F}	
\|\boldsymbol{n}^{\intercal}\boldsymbol{w}-Q_{0,F}(\boldsymbol{n}^{\intercal}\boldsymbol{w})\|_{0,F}\leq \|\boldsymbol{w}-Q_{0,T}\boldsymbol{w}\|_{0,F}\lesssim h_T^{1/2}|\boldsymbol{w}|_{1,T}.
\end{equation}
Combining the last two inequalities to get
\begin{equation*}
b_h(\boldsymbol{\sigma},v_h)-c_h(u,v_h)+(f,v_h)\lesssim h|\boldsymbol{w}|_1\bigg(\sum_{F\in \mathcal{F}_h}h_F^{-1}\|[\![v_h]\!]\|_{0,F}^2\bigg)^{1/2},
\end{equation*}
which together with \eqref{cr-jump} yields \eqref{consistencyerr-1} and \eqref{consistencyerr-2} for $k=1$.

Next we consider the case $k\geq2$.	
We get from \eqref{BDMint2}-\eqref{BDMint3} that
\begin{align*}
&\quad\; b_h(\boldsymbol{\sigma},v_h)-c_h(u,v_h)+(f,Q_{m-2,h}v_h) \\
&
=(Q_h^{\rm div}\boldsymbol{w}, Q_h^{\rm div}\nabla_hv_h) + (\div\boldsymbol{w},Q_{m-2,h}v_h)=(Q_h^{\rm div}\boldsymbol{w}-I^{\div}_{h}\boldsymbol{w}, \nabla_hv_h) \\
&=\sum_{T\in \mathcal{T}_h}\sum_{F\in\mathcal{F}(T)}((Q_h^{\rm div}\boldsymbol{w}-\boldsymbol{w})\cdot\boldsymbol{n},Q_{k-1,F}v_h-Q_{m-2,T}v_h)_{F}.
\end{align*}
By the trace inequality and the error estimate \eqref{estimateQTdiv} of $Q_h^{\rm div}$, we have
\begin{align*}
&\quad\; b_h(\boldsymbol{\sigma},v_h)-c_h(u,v_h)+(f,Q_{m-2,h}v_h) \\
&\lesssim \sum_{T\in \mathcal{T}_h}\sum_{F\in\mathcal{F}(T)}h_T^{k-1/2}|\boldsymbol{w}|_{k,T}\|Q_{k-1,F}v_h-Q_{m-2,T}v_h\|_{0,F}.
\end{align*}
Finally, the estimate \eqref{consistencyerr-1} follows from \eqref{H1ineq} for $k\geq2$, and
the estimate \eqref{consistencyerr-2} from~\eqref{H2ineq} for $m\geq3$.
\end{proof}

\begin{remark}\rm
The case $k=2$ and $m=2$ is not covered by estimate \eqref{consistencyerr-2}. In this case, 
\begin{align*}
b_h(\boldsymbol{\sigma},v_h)-c_h(u,v_h)+(f,Q_{0,h}v_h) \lesssim \sum_{T\in \mathcal{T}_h}\sum_{F\in\mathcal{F}(T)}h_T^{3/2}|\boldsymbol{w}|_{2,T}\|Q_{1,F}v_h-Q_{0,T}v_h\|_{0,F}.
\end{align*}
The failure of estimate \eqref{consistencyerr-2} in this case stems from the fact that the constant projection $Q_{0,T}v_h$ lifts $\|Q_{1,F}v_h-Q_{0,T}v_h\|_{0,F}$ only to the discrete $H^1$-seminorm $|v_h|_{1,h}$ with optimal convergence rate, but not to the discrete $H^2$-seminorm ${\interleave v_h\interleave}_{2,h}$.
\end{remark}

\begin{theorem}
Let $(\boldsymbol{\sigma},u)\in H^{-1}(\div\div,\Omega; \mathbb{M})\times H_0^1(\Omega)$ and $(\boldsymbol{\sigma}_h,u_h)\in \Sigma_{r,k,m,h}^{\div}\times \mathring{V}_{k,m,h}^{\rm VE}$ be the solution of problem \eqref{FSP-mix} and the mixed method \eqref{FSP-MVEM}, respectively. Assume $u\in H^{k+3}(\Omega)$. We have
\begin{equation}
\label{errresult1}
\varepsilon^{-1}\|\boldsymbol{\sigma}-\boldsymbol{\sigma}_h\|_0+{\interleave u-u_h\interleave}_{\varepsilon,h}\lesssim h^k\|u\|_{k+3}.
\end{equation}
If $k\neq2$ and $m\neq2$,
we have
\begin{align}
\label{errresult2}
\varepsilon^{-1}\|\boldsymbol{\sigma}-\boldsymbol{\sigma}_h\|_0+{\interleave I^{\rm VE}_hu-u_h\interleave}_{\varepsilon,h}\lesssim\varepsilon h^{r+1}\|u\|_{k+3}+\varepsilon^{-1}h^{k+1}|u|_{k+1}.
\end{align}
\end{theorem}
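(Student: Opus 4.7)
The natural strategy is to apply the discrete stability \eqref{discretestability} to the pair $(\Pi_{h}\boldsymbol{\sigma}-\boldsymbol{\sigma}_h,\,I^{\rm VE}_h u-u_h)$ and then use the triangle inequality together with the interpolation error estimates \eqref{div-M-int2} and \eqref{ve-int-err}. Since $\Pi_{h}\boldsymbol{\sigma}\in \Sigma_{r,k,m,h}^{\div}$ and $I^{\rm VE}_h u\in \mathring{V}_{k,m,h}^{\rm VE}$, \eqref{discretestability} reduces the problem to bounding the numerator
$$
N(\boldsymbol{\tau}_h,v_h):=a(\Pi_{h}\boldsymbol{\sigma},\boldsymbol{\tau}_h)+b_h(\boldsymbol{\tau}_h,I^{\rm VE}_h u)+b_h(\Pi_{h}\boldsymbol{\sigma},v_h)-c_h(I^{\rm VE}_h u,v_h)+\langle\!\langle f,v_h\rangle\!\rangle
$$
after subtracting the discrete equations \eqref{FSP-MVEM1}--\eqref{FSP-MVEM2}.

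The first simplification uses the orthogonality \eqref{exchange}: for $\boldsymbol{\tau}_h\in\Sigma_{r,k,m,h}^{\div}$, $b_h(\boldsymbol{\tau}_h,I^{\rm VE}_h u)=(\div\boldsymbol{\tau}_h,Q_h^{\rm div}\nabla_h I^{\rm VE}_h u)=(\div\boldsymbol{\tau}_h,\nabla u)=-\langle\div\div\boldsymbol{\tau}_h,u\rangle$, so by \eqref{FSP-mix1} I obtain the cancellation $a(\boldsymbol{\sigma},\boldsymbol{\tau}_h)+b_h(\boldsymbol{\tau}_h,I^{\rm VE}_h u)=0$. For the second group, the commuting diagram \eqref{div-M-int1} gives $\div\Pi_{h}\boldsymbol{\sigma}=Q_h^{\rm div}\div\boldsymbol{\sigma}$, hence $b_h(\Pi_{h}\boldsymbol{\sigma},v_h)=(\div\boldsymbol{\sigma},Q_h^{\rm div}\nabla_h v_h)=b_h(\boldsymbol{\sigma},v_h)$, while \eqref{exchange} yields $c_h(I^{\rm VE}_h u,v_h)=c_h(u,v_h)$. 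Together this collapses the numerator to
$$
N(\boldsymbol{\tau}_h,v_h)=a(\Pi_{h}\boldsymbol{\sigma}-\boldsymbol{\sigma},\boldsymbol{\tau}_h)+\bigl[b_h(\boldsymbol{\sigma},v_h)-c_h(u,v_h)+\langle\!\langle f,v_h\rangle\!\rangle\bigr],
$$
which is precisely an interpolation part plus the consistency error already controlled by \eqref{consistencyerr-1}/\eqref{consistencyerr-2}.

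The bookkeeping is the part that requires care to keep the bound robust in $\varepsilon$. Cauchy--Schwarz on the first term gives $a(\Pi_{h}\boldsymbol{\sigma}-\boldsymbol{\sigma},\boldsymbol{\tau}_h)\le \varepsilon^{-1}\|\Pi_{h}\boldsymbol{\sigma}-\boldsymbol{\sigma}\|_0\cdot\varepsilon^{-1}\|\boldsymbol{\tau}_h\|_0$, and by \eqref{div-M-int2} combined with $\boldsymbol{\sigma}=\varepsilon^2\nabla^2 u$ I get $\varepsilon^{-1}\|\Pi_{h}\boldsymbol{\sigma}-\boldsymbol{\sigma}\|_0\lesssim \varepsilon h^{r+1}|u|_{r+3}$. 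For \eqref{errresult1} I apply the coarser consistency estimate \eqref{consistencyerr-1}, test against $|v_h|_{1,h}\le \interleave v_h\interleave_{\varepsilon,h}$, insert $|\boldsymbol{\sigma}|_{k+1}=\varepsilon^2|u|_{k+3}$, and conclude by the triangle inequality for $u-u_h$ together with the identity $Q_h^{\rm div}\nabla_h(u-I^{\rm VE}_h u)=0$ implied by \eqref{exchange}, which forces $\interleave u-I^{\rm VE}_h u\interleave_{\varepsilon,h}=|u-I^{\rm VE}_h u|_{1,h}\lesssim h^k|u|_{k+1}$ via \eqref{ve-int-err}. For \eqref{errresult2}, under the assumption $k\ne 2$ and $m\ne 2$ (so $m\ge 3$ whenever $k\ge 2$, and the $k=1$ branch of \eqref{consistencyerr-2} also applies), I use the sharper \eqref{consistencyerr-2} and bound the $\interleave v_h\interleave_{2,h}$ factor by $\varepsilon^{-1}\interleave v_h\interleave_{\varepsilon,h}$, producing the two terms $\varepsilon h^{k+1}|u|_{k+3}$ (absorbed into $\varepsilon h^{r+1}\|u\|_{k+3}$ since $r+1\le k+1$) and $\varepsilon^{-1}h^{k+1}|u|_{k+1}$.

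The main obstacle is the first simplification: recognising that the $\varepsilon^{-2}$-weighted $a$-term and the $b_h$-term cancel exactly against the continuous equation when tested against a discrete $H(\div)$ tensor, which in turn relies crucially on \eqref{exchange} and on the fact that $\div\boldsymbol{\tau}_h$ lies in the range of $Q_h^{\rm div}$. Everything downstream is routine interpolation/consistency bookkeeping, but this cancellation is what eliminates a spurious $\varepsilon^{-1}$ factor and delivers the $\varepsilon$-robustness stated in \eqref{errresult1}--\eqref{errresult2}.
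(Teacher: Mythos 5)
Your proof is correct and takes essentially the same route as the paper's: you apply the discrete stability \eqref{discretestability} to $(\Pi_h\boldsymbol{\sigma}-\boldsymbol{\sigma}_h,\,I^{\rm VE}_hu-u_h)$, cancel the $a$- and $b_h$-terms tested against $\boldsymbol{\tau}_h$ via the orthogonality \eqref{exchange} and the continuous equation \eqref{FSP-mix1}, collapse the $v_h$-tested terms via the commuting property \eqref{div-M-int1} and \eqref{exchange}, and then invoke the interpolation estimate \eqref{div-M-int2} together with the consistency bounds \eqref{consistencyerr-1}/\eqref{consistencyerr-2}. The paper phrases the same cancellations through explicit error equations \eqref{eq:erreqn1}--\eqref{eq:erreqn2} and arrives at the identical intermediate bound \eqref{eq:error0}; the rest of the $\varepsilon$-bookkeeping (using $\boldsymbol{\sigma}=\varepsilon^2\nabla^2u$, $\interleave v_h\interleave_{2,h}\le\varepsilon^{-1}\interleave v_h\interleave_{\varepsilon,h}$, and $\interleave u-I^{\rm VE}_hu\interleave_{2,h}=0$) matches the paper.
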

\begin{proof}
Take $\boldsymbol{\tau}_h\in\Sigma_{r,k,m,h}^{\div}$ and $v_h\in\mathring{V}_{k,m,h}^{\rm VE}$.
From the equation \eqref{FSP-mix1} and the mixed method \eqref{FSP-MVEM}, we have the error equations
\begin{align}
\label{eq:erreqn1}
a(\Pi_{h}\boldsymbol{\sigma}-\boldsymbol{\sigma}_h,\boldsymbol{\tau}_h)+b_h(\boldsymbol{\tau}_h,u-u_h)&=a(\Pi_{h}\boldsymbol{\sigma}-\boldsymbol{\sigma},\boldsymbol{\tau}_h), \\
\label{eq:erreqn2}
b_h(\boldsymbol{\sigma}-\boldsymbol{\sigma}_h,v_h)-c_h(u-u_h,v_h) &= b_h(\boldsymbol{\sigma},v_h)-c_h(u,v_h)+\langle\!\langle f, v_h\rangle\!\rangle.
\end{align}

Noting that $b_h(\boldsymbol{\tau}_h,I^{\rm VE}_hu-u)=0$ follows from \eqref{exchange}, the error equation \eqref{eq:erreqn1} becomes
\begin{align}\label{errresult-pf1}
a(\Pi_{h}\boldsymbol{\sigma}-\boldsymbol{\sigma}_h,\boldsymbol{\tau}_h)+b_h(\boldsymbol{\tau}_h,I^{\rm VE}_hu-u_h)	&= a(\Pi_{h}\boldsymbol{\sigma}-\boldsymbol{\sigma},\boldsymbol{\tau}_h) \\
\notag
&\leq \varepsilon^{-2}\|\Pi_{h}\boldsymbol{\sigma}-\boldsymbol{\sigma}\|_0\|\boldsymbol{\tau}_h\|_0.
\end{align}
On the other side, we acquire from \eqref{div-M-int1}, \eqref{exchange} and \eqref{eq:erreqn2} that
\begin{align}\label{errresult-pf2}
b_h(\Pi_{h}\boldsymbol{\sigma}-\boldsymbol{\sigma}_h,v_h)-c_h(I^{\rm VE}_hu-u_h,v_h) 
&= b_h(\boldsymbol{\sigma}-\boldsymbol{\sigma}_h,v_h)-c_h(u-u_h,v_h)\\
\notag
&= b_h(\boldsymbol{\sigma},v_h)-c_h(u,v_h)+\langle\!\langle f, v_h\rangle\!\rangle.
\end{align}
The combination of \eqref{errresult-pf1}-\eqref{errresult-pf2} and the discrete stability \eqref{discretestability} yields
\begin{align}\label{eq:error0}
&\varepsilon^{-1}\|\Pi_{h}\boldsymbol{\sigma}-\boldsymbol{\sigma}_h\|_0+{\interleave I^{\rm VE}_hu-u_h\interleave}_{\varepsilon,h}\lesssim \varepsilon^{-1}\|\Pi_{h}\boldsymbol{\sigma}-\boldsymbol{\sigma}\|_0 \\
\notag
&\qquad\qquad\qquad\qquad\qquad\qquad + \sup_{v_h\in \mathring{V}_{k,m,h}^{\rm VE}}
\frac{b_h(\boldsymbol{\sigma},v_h)-c_h(u,v_h)+\langle\!\langle f, v_h\rangle\!\rangle}{\interleave v_h\interleave_{\varepsilon,h}}.
\end{align}
Therefore, by using the triangle inequality and \eqref{div-M-int2}, we derive \eqref{errresult1} from \eqref{consistencyerr-1} and \eqref{ve-int-err}-\eqref{exchange}, and \eqref{errresult2} from \eqref{consistencyerr-2}.
\end{proof}

\begin{remark}
When $\varepsilon\eqsim 1$, 
the estimate ${\interleave u-u_h\interleave}_{\varepsilon,h}=O(h^k)$ in \eqref{errresult1} is superconvergent for all $k\geq 1$, the estimate ${\interleave I^{\rm VE}_hu-u_h\interleave}_{\varepsilon,h}=O(h^{k+1})$ in \eqref{errresult2} is superconvergent for all $r=k\geq1$ except case $k=m=2$, and the estimate $\|\boldsymbol{\sigma}-\boldsymbol{\sigma}_h\|_0=O(h^{r+1})$ in \eqref{errresult2} is optimal for all $k\geq1$ except case $r=k=m=2$.
When $r=k=m=2$, the estimate $\|\boldsymbol{\sigma}-\boldsymbol{\sigma}_h\|_0=O(h^{2})$ in \eqref{errresult1} is suboptimal but confirmed to be sharp by numerical results.
\end{remark}

The estimates \eqref{errresult1}-\eqref{errresult2} are not robust with respect to the singular perturbation parameter $ \varepsilon $ in the presence of boundary layers. At the end of this section, we present a robust and optimal error estimate for the mixed method \eqref{FSP-MVEM}.

\begin{lemma}
Let $(\boldsymbol{\sigma},u)\in H^{-1}(\div\div,\Omega; \mathbb{M})\times H_0^1(\Omega)$ be the solution of problem~\eqref{FSP-mix}. Assume $\boldsymbol{\sigma}\in H^{1}(\Omega;\mathbb{M})$,  $u\in H^{3}(\Omega)$, and the regularity \eqref{poissonregularity} holds with $s=k+1$. We have for any $v_h\in \mathring{V}_{k,m,h}^{\rm VE}$ that
\begin{equation}
\label{consistencyerr-3}
b_h(\boldsymbol{\sigma},v_h)-c_h(u,v_h)+\langle\!\langle f, v_h\rangle\!\rangle\lesssim (\varepsilon^{1/2}\|f\|_0+h^k\|f\|_{k-1})|v_h|_{1,h}.
\end{equation}
\end{lemma}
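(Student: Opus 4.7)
The plan is to split $u$ along the Poisson limit so that the consistency error decomposes into an $\varepsilon$-uniform ``regular'' contribution and an $\varepsilon$-dependent ``singular'' contribution which happens to be divergence-free in the auxiliary vector field $\boldsymbol{w}:=\div\boldsymbol{\sigma}-\nabla u$ that already appears throughout the proofs of \eqref{consistencyerr-1}--\eqref{consistencyerr-2}. Let $\bar u\in H_0^1(\Omega)$ solve the reduced Poisson problem \eqref{poisson}; then \eqref{poissonregularity} with $s=k+1$ gives $\|\bar u\|_{k+1}\lesssim\|f\|_{k-1}$, and we write
\begin{equation*}
\boldsymbol{w}=-\nabla\bar u+\boldsymbol{r},\qquad \boldsymbol{r}:=\varepsilon^2\nabla\Delta u-\nabla(u-\bar u).
\end{equation*}
From $\div\div\boldsymbol{\sigma}-\Delta u=f$ and $-\Delta\bar u=f$ one checks $\div\boldsymbol{r}=0$ in the distributional sense, so $\boldsymbol{r}\in H(\div,\Omega)$, while \eqref{regularity} yields $\|\boldsymbol{r}\|_0\le\varepsilon^2|u|_3+|u-\bar u|_1\lesssim\varepsilon^{1/2}\|f\|_0$.

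With this splitting in hand, I would rerun the algebraic identities used in the proofs of \eqref{consistencyerr-1}--\eqref{consistencyerr-2} piece by piece. For the regular part $-\nabla\bar u$, the regularity $\bar u\in H^{k+1}(\Omega)$ is enough to apply the original arguments verbatim; the only care needed is to estimate the jump/projection factor via \eqref{cr-jump} when $k=1$ or via \eqref{H1ineq} when $k\ge2$ so as to keep $|v_h|_{1,h}$ (and not ${\interleave v_h\interleave}_{2,h}$) on the right-hand side. This produces a contribution bounded by $h^k\|\bar u\|_{k+1}|v_h|_{1,h}\lesssim h^k\|f\|_{k-1}|v_h|_{1,h}$.

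For the singular part $\boldsymbol{r}$ the divergence-free condition compensates for the lack of extra regularity. When $k\ge2$, starting from the identity
\begin{equation*}
b_h(\boldsymbol{\sigma},v_h)-c_h(u,v_h)+(f,Q_{m-2,h}v_h)=(Q_h^{\rm div}\boldsymbol{w},Q_h^{\rm div}\nabla_h v_h)+(\div\boldsymbol{w},Q_{m-2,h}v_h),
\end{equation*}
the $\boldsymbol{r}$-contribution collapses to $(Q_h^{\rm div}\boldsymbol{r},Q_h^{\rm div}\nabla_h v_h)$ because $\div\boldsymbol{r}=0$; Cauchy--Schwarz together with the $L^2$-boundedness of $Q_h^{\rm div}$ and the norm equivalence \eqref{ve-infsup2} then gives a bound of order $\|\boldsymbol{r}\|_0|v_h|_{1,h}\lesssim\varepsilon^{1/2}\|f\|_0|v_h|_{1,h}$. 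When $k=1$, the weak continuity \eqref{weak-c} kills the $Q_{0,F}(\boldsymbol{n}^\intercal\boldsymbol{r})$ term in the boundary sum, and elementwise integration by parts with $\div\boldsymbol{r}=0$ collapses $\sum_F(\boldsymbol{r}\cdot\boldsymbol{n},[\![v_h]\!])_F$ to the volume inner product $(\boldsymbol{r},\nabla_h v_h)\le\|\boldsymbol{r}\|_0|v_h|_{1,h}$.

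The main subtlety is that $\boldsymbol{r}$ only lies in $H(\div,\Omega)\cap L^2$, not in $H^1$, so the interpolation identities \eqref{BDMint2}--\eqref{BDMint3} cannot be invoked on $\boldsymbol{r}$ to trade it for $I^{\div}_h\boldsymbol{r}$ as was done in the original consistency analysis; the route around this is precisely to use $\div\boldsymbol{r}=0$ together with the $L^2$-bounded projector $Q_h^{\rm div}$ and the virtual element norm equivalence \eqref{ve-infsup2}. Summing the two contributions produces \eqref{consistencyerr-3}.
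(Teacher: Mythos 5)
Your proof is correct and coincides at its core with the paper's argument: both split $u$ against the reduced Poisson solution $\bar{u}$, bound the $\varepsilon$-singular remainder by Cauchy--Schwarz with the regularity~\eqref{regularity}, and reduce the regular piece to $\langle\!\langle f, v_h\rangle\!\rangle - c_h(\bar{u},v_h)$, which is then handled with the same $k=1$ / $k\geq2$ machinery used for \eqref{consistencyerr-1}--\eqref{consistencyerr-2} together with the Poisson regularity~\eqref{poissonregularity}. Your packaging of the singular part as the divergence-free field $\boldsymbol{r}=\varepsilon^2\nabla\Delta u-\nabla(u-\bar{u})$ is sound but a slight detour: the paper sidesteps the $H(\div)$-trace subtlety you flag by directly estimating $b_h(\boldsymbol{\sigma},v_h)-c_h(u-\bar{u},v_h)$ via Cauchy--Schwarz, which is algebraically the same quantity as your $(Q_h^{\rm div}\boldsymbol{r},Q_h^{\rm div}\nabla_h v_h)$ but never requires $\div\boldsymbol{r}=0$ or a normal trace of $\boldsymbol{r}$.
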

\begin{proof}
It follows from the Cauchy-Schwarz inequality and \eqref{regularity} that
\begin{equation*}
b_h(\boldsymbol{\sigma},v_h)-c_h(u-\bar{u},v_h)\lesssim(\varepsilon^2|u|_3+|u-\bar{u}|_1)|v_h|_{1,h}\lesssim\varepsilon^{1/2}\|f\|_0|v_h|_{1,h}.
\end{equation*}
Since
\begin{equation*}
b_h(\boldsymbol{\sigma},v_h)-c_h(u,v_h)+\langle\!\langle f, v_h\rangle\!\rangle = b_h(\boldsymbol{\sigma},v_h)-c_h(u-\bar{u},v_h)-c_h(\bar{u},v_h)+\langle\!\langle f, v_h\rangle\!\rangle, 
\end{equation*} 
it suffices to prove
\begin{equation}\label{eq:20250105}
\langle\!\langle f, v_h\rangle\!\rangle-c_h(\bar{u},v_h)\lesssim h^k\|f\|_{k-1}|v_h|_{1,h}.
\end{equation}

First consider the case $k=1$. We get from \eqref{poisson}, integration by parts and the weak continuity \eqref{weak-c} that
\begin{align*}
(f, v_h) - c_h(\bar{u},v_h) =&-\sum_{T\in \mathcal{T}_h}\sum_{F\in\mathcal{F}(T)}(\partial_n\bar{u},v_h)_{F}\\
=&-\sum_{T\in \mathcal{T}_h}\sum_{F\in\mathcal{F}(T)}(\partial_n\bar{u}-Q_{0,F}(\partial_n\bar{u}),v_h-Q_{0,F}v_h)_{F} \\
\leq & \sum_{T\in \mathcal{T}_h}\sum_{F\in\mathcal{F}(T)}\|\partial_n\bar{u}-Q_{0,F}(\partial_n\bar{u})\|_{0,F}\|v_h-Q_{0,F}v_h\|_{0,F}.
\end{align*}
Similar to \eqref{Q0F}, we have
\begin{equation*}
\|\partial_n\bar{u}-Q_{0,F}(\partial_n\bar{u})\|_{0,F}\lesssim h_T^{1/2}|\bar{u}|_{2,T}, \quad \|v_h-Q_{0,F}v_h\|_{0,F} \lesssim h_T^{1/2}|v_h|_{1,T}.
\end{equation*}
The estimate \eqref{eq:20250105} holds from the last three inequalities and the regularity \eqref{poissonregularity}.

Next consider the case $k\geq2$. We get from \eqref{poisson} and \eqref{BDMint2}-\eqref{BDMint3} that
\begin{align*}
&\quad(f,Q_{m-2,h}v_h) - c_h(\bar{u},v_h) \\
&\!=\! -\big(\div(\nabla \bar{u}), Q_{m-2,h}v_h\big) - \big(Q_h^{\rm div}(\nabla \bar{u}), \nabla_hv_h\big) \!=\! \big(I^{\div}_{h}(\nabla \bar{u}) - Q_h^{\rm div}(\nabla \bar{u}), \nabla_hv_h\big) \\
&\!=\!\!\sum_{T\in \mathcal{T}_h}\!\sum_{F\in\mathcal{F}(T)}\!\big((\nabla\bar{u}-Q_h^{\rm div}(\nabla\bar{u}))\cdot\boldsymbol{n},Q_{k-1,F}v_h-Q_{m-2,T}v_h\big)_{F}.
\end{align*}
On the other side, by the trace inequality and the error estimate \eqref{estimateQTdiv} of $Q_h^{\rm div}$,
\begin{align*}
\|\nabla\bar{u}-Q_h^{\rm div}(\nabla\bar{u})\|_{0,F}&\lesssim h_T^{-1/2}\|\nabla\bar{u}-Q_h^{\rm div}(\nabla\bar{u})\|_{0,T} + h_T^{1/2}|\nabla\bar{u}-Q_h^{\rm div}(\nabla\bar{u})|_{1,T} \\
&\lesssim h_T^{k-1/2}|\bar{u}|_{k+1,T}.
\end{align*}
Finally, we conclude \eqref{eq:20250105} from the regularity \eqref{poissonregularity} and~\eqref{H1ineq}.
\end{proof}

\begin{theorem}
Let $(\boldsymbol{\sigma},u)\in H^{-1}(\div\div,\Omega; \mathbb{M})\times H_0^1(\Omega)$ and $(\boldsymbol{\sigma}_h,u_h)\in \Sigma_{r,k,m,h}^{\div}\times \mathring{V}_{k,m,h}^{\rm VE}$ be the solution of problem \eqref{FSP-mix} and the mixed method \eqref{FSP-MVEM}, respectively. Assume $\boldsymbol{\sigma}\in H^{1}(\Omega;\mathbb{M})$,  $u\in H^{3}(\Omega)$, and the regularity \eqref{poissonregularity} holds with $s=k+1$. We have
\begin{align}
\label{errresult3}
\varepsilon^{-1}\|\boldsymbol{\sigma}-\boldsymbol{\sigma}_h\|_0+{\interleave u-u_h\interleave}_{\varepsilon,h}&\lesssim \varepsilon^{1/2}\|f\|_{0}+h^{k}\|f\|_{k-1},\\
\label{errresult4}
\varepsilon|Q_h^{\rm div}\nabla_h(\bar{u}-u_h)|_{1,h}+|\bar{u}-u_h|_{1,h}&\lesssim \varepsilon^{1/2}\|f\|_{0}+h^{k}\|f\|_{k-1}.
\end{align}
\end{theorem}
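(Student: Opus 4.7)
The plan is to apply the discrete stability of Theorem~\ref{thm:FSP-MVEM} to the interpolated error pair $(\Pi_h\boldsymbol{\sigma}-\boldsymbol{\sigma}_h,\,I^{\rm VE}_h u-u_h)$, and then leverage the $\varepsilon$-robust consistency bound~\eqref{consistencyerr-3} together with the $L^2$-stability of $\Pi_h$. First I would subtract the discrete equations~\eqref{FSP-MVEM} from the continuous ones~\eqref{FSP-mix} after inserting $\Pi_h\boldsymbol{\sigma}$ and $I^{\rm VE}_h u$. By the commuting identity~\eqref{div-M-int1} and the orthogonality~\eqref{exchange}, the resulting error equations reduce, via the discrete stability~\eqref{discretestability}, to
\begin{equation*}
\varepsilon^{-1}\|\Pi_h\boldsymbol{\sigma}-\boldsymbol{\sigma}_h\|_0 + {\interleave I^{\rm VE}_h u - u_h \interleave}_{\varepsilon,h} \lesssim \varepsilon^{-1}\|\Pi_h\boldsymbol{\sigma}-\boldsymbol{\sigma}\|_0 + \sup_{v_h}\frac{b_h(\boldsymbol{\sigma},v_h)-c_h(u,v_h)+\langle\!\langle f,v_h\rangle\!\rangle}{{\interleave v_h\interleave}_{\varepsilon,h}},
\end{equation*}
exactly mirroring the proof of~\eqref{errresult1}.

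The two right-hand terms must then be bounded $\varepsilon$-robustly. The critical point is to invoke only the $s=0$ case of~\eqref{div-M-int2}, i.e., the $L^2$-boundedness of $\Pi_h$; combined with $\boldsymbol{\sigma}=\varepsilon^{2}\nabla^{2}u$ and the boundary-layer regularity~\eqref{regularity}, this yields
\begin{equation*}
\varepsilon^{-1}\|\Pi_h\boldsymbol{\sigma}-\boldsymbol{\sigma}\|_0 \lesssim \varepsilon^{-1}\|\boldsymbol{\sigma}\|_0 = \varepsilon|u|_2 \lesssim \varepsilon^{1/2}\|f\|_0.
\end{equation*}
The consistency supremum is handled by~\eqref{consistencyerr-3} and $|v_h|_{1,h}\leq {\interleave v_h\interleave}_{\varepsilon,h}$, producing $\varepsilon^{1/2}\|f\|_0+h^k\|f\|_{k-1}$. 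A triangle inequality then closes the estimate on $\boldsymbol{\sigma}-\boldsymbol{\sigma}_h$.

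To upgrade ${\interleave I^{\rm VE}_h u-u_h\interleave}_{\varepsilon,h}$ into ${\interleave u-u_h\interleave}_{\varepsilon,h}$, the key observation is that~\eqref{exchange} implies $Q_h^{\rm div}\nabla_h(u-I^{\rm VE}_h u)\equiv 0$, so ${\interleave u - I^{\rm VE}_h u\interleave}_{2,h}=0$ and only $|u-I^{\rm VE}_h u|_{1,h}$ needs attention. I would split $u=\bar{u}+(u-\bar{u})$: for the Poisson part, the regularity~\eqref{poissonregularity} with $s=k+1$ combined with~\eqref{ve-int-err} gives $|\bar{u}-I^{\rm VE}_h\bar{u}|_{1,h}\lesssim h^k|\bar{u}|_{k+1}\lesssim h^k\|f\|_{k-1}$, while for the boundary-layer part, the $H^1$-stability of $I^{\rm VE}_h$ (from~\eqref{ve-int-err} with $s=0$) together with~\eqref{regularity} gives $|(u-\bar{u})-I^{\rm VE}_h(u-\bar{u})|_{1,h}\lesssim|u-\bar{u}|_1\lesssim\varepsilon^{1/2}\|f\|_0$. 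This proves~\eqref{errresult3}.

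Finally, \eqref{errresult4} follows from~\eqref{errresult3} by the triangle inequality through $u$: the $H^1$-piece of $\bar{u}-u$ is bounded by $\varepsilon^{1/2}\|f\|_0$ via~\eqref{regularity}, and $\varepsilon|Q_h^{\rm div}\nabla(\bar{u}-u)|_{1,h}\lesssim\varepsilon(|\bar{u}|_2+|u|_2)\lesssim\varepsilon^{1/2}\|f\|_0$ by elementwise $H^1$-stability of $Q_h^{\rm div}$ (which follows from~\eqref{estimateQTdiv} with $s=1$) together with Poisson regularity and~\eqref{regularity}. The main obstacle I anticipate is precisely the $\varepsilon$-robustness of $\|\Pi_h\boldsymbol{\sigma}-\boldsymbol{\sigma}\|_0$: the natural approximation bound $h|\boldsymbol{\sigma}|_1=h\varepsilon^2|u|_3\lesssim h\varepsilon^{1/2}\|f\|_0$ would, after dividing by $\varepsilon$, introduce the spurious factor $\varepsilon^{-1/2}$, so the $L^2$-stability of $\Pi_h$ (rather than its full approximation power) is what makes the argument uniform in $\varepsilon$; the analogous role of~\eqref{exchange} is to annihilate the otherwise unmanageable $H^2$ interpolation seminorm ${\interleave u-I^{\rm VE}_h u\interleave}_{2,h}$.
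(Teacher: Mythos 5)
Your proposal is correct and follows essentially the same route as the paper: the error identity obtained from the discrete stability~\eqref{discretestability}, the commuting property~\eqref{div-M-int1}, and the orthogonality~\eqref{exchange}; the $L^2$-stability ($s=0$) of~$\Pi_h$ combined with~\eqref{regularity} to avoid the spurious $\varepsilon^{-1/2}$ factor; the consistency bound~\eqref{consistencyerr-3}; the splitting $u=\bar u+(u-\bar u)$ with~\eqref{poissonregularity} and~\eqref{ve-int-err}; and for~\eqref{errresult4}, a triangle inequality through $u$ using the elementwise $H^1$-stability of $Q_h^{\rm div}$ from~\eqref{estimateQTdiv}. The key structural observations you single out (the role of $L^2$-stability of $\Pi_h$ and the annihilation of ${\interleave u-I^{\rm VE}_h u\interleave}_{2,h}$ via~\eqref{exchange}) are exactly the ones the paper relies on.
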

\begin{proof}
Employ the estimate \eqref{div-M-int2} and the regularity \eqref{regularity} to have
\begin{equation}
\label{errresult-pf4}
\varepsilon^{-1}\|\boldsymbol{\sigma}-\Pi_{h}\boldsymbol{\sigma}\|_{0}\lesssim \varepsilon^{-1}\|\boldsymbol{\sigma}\|_0\lesssim\varepsilon|u|_2\lesssim\varepsilon^{1/2}\|f\|_0.
\end{equation}
By substituting \eqref{errresult-pf4} and \eqref{consistencyerr-3} into \eqref{eq:error0}, we acquire
\begin{equation}\label{eq:err20250106}
\varepsilon^{-1}\|\Pi_{h}\boldsymbol{\sigma}-\boldsymbol{\sigma}_h\|_0+{\interleave I^{\rm VE}_hu-u_h\interleave}_{\varepsilon,h}\lesssim \varepsilon^{1/2}\|f\|_0+h^k\|f\|_{k-1}.
\end{equation}
On the other side, it follows from \eqref{ve-int-err} and \eqref{poissonregularity}-\eqref{regularity} that
\begin{equation}
\label{errresult-pf3}
\begin{aligned}
|u-I^{\rm VE}_hu|_{1,h}&\lesssim |(u-\bar{u})-I^{\rm VE}_h(u-\bar{u})|_{1,h}+|\bar{u}-I^{\rm VE}_h\bar{u}|_{1,h}\\
&\lesssim |u-\bar{u}|_{1}+h^k|\bar{u}|_{k+1}\lesssim\varepsilon^{1/2}\|f\|_{0}+h^{k}\|f\|_{k-1}.
\end{aligned}
\end{equation}
Noting that ${\interleave u-I^{\rm VE}_hu\interleave}_{2,h} = 0$ by \eqref{exchange}, apply the triangle inequality to have
\begin{align*}
\varepsilon^{-1}\|\boldsymbol{\sigma}-\boldsymbol{\sigma}_h\|_0+{\interleave u-u_h\interleave}_{\varepsilon,h}&\leq  \varepsilon^{-1}(\|\boldsymbol{\sigma}-\Pi_{h}\boldsymbol{\sigma}\|_0+\|\Pi_{h}\boldsymbol{\sigma}-\boldsymbol{\sigma}_h\|_0)\\
&\quad+{\interleave I^{\rm VE}_hu-u_h\interleave}_{\varepsilon,h}+|u-I^{\rm VE}_hu|_{1,h},
\end{align*}
which together with \eqref{errresult-pf4}-\eqref{errresult-pf3} yields \eqref{errresult3}.

Next we prove \eqref{errresult4}. By \eqref{poissonregularity}-\eqref{regularity} and the error estimate \eqref{estimateQTdiv} of $Q_h^{\rm div}$, we get
\begin{align*}
\varepsilon|Q_h^{\rm div}\nabla(\bar{u}-u)|_{1,h}+|\bar{u}-u|_{1}\lesssim \varepsilon|\bar{u}-u|_{2}+|\bar{u}-u|_{1}\lesssim (\varepsilon+\varepsilon^{1/2})\|f\|_0\lesssim\varepsilon^{1/2}\|f\|_{0}.
\end{align*}
Thus, the estimate \eqref{errresult4} follows from the triangle inequality and \eqref{errresult3}. 
\end{proof}

\section{Connection to Other Methods}\label{sec5}
In this section, we will hybridize the mixed method \eqref{FSP-MVEM}, and connect it to weak Galerkin methods and a mixed finite element method for problem \eqref{FSP0}.

\subsection{Stabilization-free weak Galerkin methods}
To connect the mixed method~\eqref{FSP-MVEM} to weak Galerkin methods, introduce the following discrete broken spaces
\begin{align*}
M_{m-2, k-1}^{-1}&:=\mathbb P_{m-2}(\mathcal{T}_h)\times \mathbb P_{k-1}(\mathring{\mathcal{F}}_h), \\
M_{m-2, k-1,r}^{-1}&:=\mathbb P_{m-2}(\mathcal{T}_h)\times \mathbb P_{k-1}(\mathring{\mathcal{F}}_h)\times \mathbb P_{r}(\mathring{\mathcal{F}}_h;\mathbb R^d), \\
V_{k-1,m-1}^{-1}&:=\{\boldsymbol{v}\in L^{2}(\Omega;\mathbb{R}^d):\boldsymbol{v}|_T\in V_{k-1,m-1}(T;\mathbb R^d)~\textrm{for}~T\in\mathcal{T}_h\}, \\
\Sigma_{r,k,m}^{-1}&:=\{\boldsymbol{\tau}\in L^{2}(\Omega;\mathbb{M}):\boldsymbol{\tau}|_T\in \Sigma_{r,k,m}(T;\mathbb{M})~\textrm{for}~T\in\mathcal{T}_h\}.
\end{align*}

We first recast the mixed method \eqref{FSP-MVEM} with the help of the weak gradient.
Define the interpolation operator $I_h^{\rm NC}: L^2(\mathcal{T}_h)\times L^2(\mathring{\mathcal{F}}_h)\to \mathring{V}_{k,m,h}^{\rm VE}$ as follows: for $v=(v_0, v_b)\in L^2(\mathcal{T}_h)\times L^2(\mathring{\mathcal{F}}_h)$, the function $I_h^{\rm NC}v\in \mathring{V}_{k,m,h}^{\rm VE}$ is determined by
\begin{align*}
Q_{k-1,F}(I_h^{\rm NC}v)&=Q_{k-1,F}v_b \qquad\, \forall\,F\in\mathring{\mathcal{F}}_h, \\
Q_{m-2,T}(I_h^{\rm NC}v)&=Q_{m-2,T}v_0 \qquad \forall\,T\in\mathcal{T}_h. 
\end{align*}

\begin{lemma}\label{lem:IhNCbijection}
The interpolation operator $I_h^{\rm NC}: M_{m-2, k-1}^{-1}\to \mathring{V}_{k,m,h}^{\rm VE}$ is bijective.
\end{lemma}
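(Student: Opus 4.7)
The plan is to reduce the claim to the unisolvence of the local DoFs \eqref{ve-dof} already used to define $V_{k,m}^{\rm VE}(T)$, combined with a dimension count between the two spaces. The operator $I_h^{\rm NC}$ is essentially the inverse of the DoF map restricted to the nonconforming global space, so only bookkeeping of the element/face contributions is needed.

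First I would verify that $I_h^{\rm NC}$ is well-defined. Given $v=(v_0,v_b)\in M_{m-2,k-1}^{-1}$, the assumption that $v_0|_T\in\mathbb P_{m-2}(T)$ and $v_b|_F\in\mathbb P_{k-1}(F)$ gives $Q_{m-2,T}v_0 = v_0|_T$ and $Q_{k-1,F}v_b=v_b|_F$ on each $T\in\mathcal T_h$ and each $F\in\mathring{\mathcal F}_h$. The defining conditions of $I_h^{\rm NC}v$ thus prescribe the full set of local DoFs \eqref{ve-dof1}--\eqref{ve-dof2} on every simplex; since the face datum $v_b|_F$ is single-valued across each interior face and, by the definition of $\mathbb P_{k-1}(\mathring{\mathcal F}_h)$, vanishes on $\partial\Omega$, the resulting piecewise-VEM function lies in $\mathring V_{k,m,h}^{\rm VE}$. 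Existence and uniqueness of such an element-wise function follow from the unisolvence of \eqref{ve-dof} for $V_{k,m}^{\rm VE}(T)$.

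Next I would establish injectivity. If $I_h^{\rm NC}v=0$, then applying the relations in the definition gives $v_0|_T = Q_{m-2,T}v_0 = Q_{m-2,T}(I_h^{\rm NC}v) = 0$ for every $T\in\mathcal T_h$, and analogously $v_b|_F = Q_{k-1,F}v_b = Q_{k-1,F}(I_h^{\rm NC}v) = 0$ for every $F\in\mathring{\mathcal F}_h$. Hence $v=0$, so $I_h^{\rm NC}$ is injective.

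For surjectivity I would compare dimensions: by construction $\dim M_{m-2,k-1}^{-1} = \#\mathcal T_h\cdot\dim\mathbb P_{m-2}(T) + \#\mathring{\mathcal F}_h\cdot\dim\mathbb P_{k-1}(F)$, which matches the cardinality of the global DoF set of $\mathring V_{k,m,h}^{\rm VE}$ coming from \eqref{ve-dof1}--\eqref{ve-dof2} after imposing single-valuedness on interior faces and the vanishing condition on $\partial\Omega$. Hence $\dim M_{m-2,k-1}^{-1} = \dim\mathring V_{k,m,h}^{\rm VE}$, and injectivity implies bijectivity. No step is really an obstacle; the only mild subtlety is tracking that the prescribed face values of $I_h^{\rm NC}v$ are automatically single-valued and boundary-vanishing, which is built into the definition of $M_{m-2,k-1}^{-1}$, so the resulting function indeed lies in the global space $\mathring V_{k,m,h}^{\rm VE}$ rather than merely in its broken version.
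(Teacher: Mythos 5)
Your proof is correct and follows essentially the same route as the paper's: observe that for $v=(v_0,v_b)\in M_{m-2,k-1}^{-1}$ the defining relations reduce to $Q_{m-2,T}(I_h^{\rm NC}v)=v_0$ and $Q_{k-1,F}(I_h^{\rm NC}v)=v_b$ (so the DoF map is a left inverse, giving injectivity), then invoke $\dim M_{m-2,k-1}^{-1}=\dim\mathring V_{k,m,h}^{\rm VE}$ to conclude bijectivity. Your additional remarks on well-definedness are a harmless expansion of the same bookkeeping.
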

\begin{proof}
For $v=(v_0, v_b)\in M_{m-2, k-1}^{-1}$, by the definition of $I_h^{\rm NC}v$, we have
\begin{align*}
Q_{k-1,F}(I_h^{\rm NC}v)=v_b \;\; \textrm{ for } F\in\mathring{\mathcal{F}}_h;\quad
Q_{m-2,T}(I_h^{\rm NC}v)=v_0 \;\; \textrm{ for } T\in\mathcal{T}_h. 
\end{align*}
This shows that $I_h^{\rm NC}$ restricted to $M_{m-2, k-1}^{-1}$ is injective. Then we conclude the result from $\dim M_{m-2, k-1}^{-1}=\dim\mathring{V}_{k,m,h}^{\rm VE}$.
\end{proof}

Define the weak gradient $\nabla_w:M_{m-2, k-1}^{-1}\to V_{k-1,m-1}^{-1}$ as follows: for $v=(v_0, v_b)\in M_{m-2, k-1}^{-1}$, let $\nabla_wv\in V_{k-1,m-1}^{-1}$ be determined by
\begin{equation*}
(\nabla_wv, \boldsymbol{q})_T=-(v_0,\div\boldsymbol{q})_T+(v_b, \boldsymbol{q}\cdot\boldsymbol{n})_{\partial T}\quad\forall\,\boldsymbol{q}\in V_{k-1,m-1}(T;\mathbb R^d), T\in\mathcal{T}_h.
\end{equation*}

\begin{lemma}
It holds
\begin{equation}\label{eq:weakgradprop}
\nabla_wv=Q_h^{\rm div}\nabla_h(I_h^{\rm NC}v)\quad\forall\,v\in M_{m-2, k-1}^{-1}.
\end{equation}
\end{lemma}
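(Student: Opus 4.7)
The plan is to compute $(Q_h^{\rm div}\nabla_h(I_h^{\rm NC}v),\boldsymbol{q})_T$ for an arbitrary test function $\boldsymbol{q}\in V_{k-1,m-1}(T;\mathbb R^d)$ on each $T\in\mathcal{T}_h$ and match it against the defining formula for $\nabla_w v$. Because both $\nabla_w v|_T$ and $Q_h^{\rm div}\nabla_h(I_h^{\rm NC}v)|_T$ already lie in $V_{k-1,m-1}(T;\mathbb R^d)$, agreement of these inner products over the full test space is enough. By the defining property of $Q_h^{\rm div}$ and integration by parts,
\[
(Q_h^{\rm div}\nabla_h(I_h^{\rm NC}v),\boldsymbol{q})_T=(\nabla(I_h^{\rm NC}v),\boldsymbol{q})_T=-(I_h^{\rm NC}v,\div\boldsymbol{q})_T+(I_h^{\rm NC}v,\boldsymbol{q}\cdot\boldsymbol{n})_{\partial T}.
\]

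The key bookkeeping step is to verify the polynomial compatibility $\div V_{k-1,m-1}(T;\mathbb R^d)\subseteq\mathbb{P}_{m-2}(T)$ and $\boldsymbol{q}\cdot\boldsymbol{n}|_F\in\mathbb{P}_{k-1}(F)$ for every $F\in\mathcal{F}(T)$. The first follows from a short computation: $\div(\boldsymbol{x}h)=(d+m-2)h\in\mathbb{H}_{m-2}(T)$ for $h\in\mathbb{H}_{m-2}(T)$ by Euler's identity, together with $\div\mathbb{P}_{k-1}(T;\mathbb R^d)\subseteq\mathbb{P}_{k-2}(T)\subseteq\mathbb{P}_{m-2}(T)$ under the constraint $m=k,k+1$. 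The second follows because $\boldsymbol{x}\cdot\boldsymbol{n}$ is constant on each face $F\subset\partial T$, so $(\boldsymbol{x}h)\cdot\boldsymbol{n}|_F\in\mathbb{P}_{m-2}(F)\subseteq\mathbb{P}_{k-1}(F)$, while $\mathbb{P}_{k-1}(T;\mathbb R^d)\cdot\boldsymbol{n}|_F\subseteq\mathbb{P}_{k-1}(F)$ trivially.

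With these containments in hand, the $L^2$-projection characterization of $I_h^{\rm NC}v$ collapses the right-hand side to the weak gradient. Indeed,
\[
(I_h^{\rm NC}v,\div\boldsymbol{q})_T=(Q_{m-2,T}(I_h^{\rm NC}v),\div\boldsymbol{q})_T=(v_0,\div\boldsymbol{q})_T,
\]
and for every $F\in\mathcal{F}(T)$ with $F\in\mathring{\mathcal{F}}_h$,
\[
(I_h^{\rm NC}v,\boldsymbol{q}\cdot\boldsymbol{n})_F=(Q_{k-1,F}(I_h^{\rm NC}v),\boldsymbol{q}\cdot\boldsymbol{n})_F=(v_b,\boldsymbol{q}\cdot\boldsymbol{n})_F,
\]
while each boundary face $F\subset\partial\Omega$ contributes zero on both sides, since $I_h^{\rm NC}v\in\mathring{V}_{k,m,h}^{\rm VE}$ forces $Q_{k-1,F}(I_h^{\rm NC}v)=0$ there and $v_b\in\mathbb{P}_{k-1}(\mathring{\mathcal{F}}_h)$ is supported away from $\partial\Omega$. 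Summing over faces and substituting yields exactly $-(v_0,\div\boldsymbol{q})_T+(v_b,\boldsymbol{q}\cdot\boldsymbol{n})_{\partial T}=(\nabla_w v,\boldsymbol{q})_T$, which finishes the proof. The only genuine content is the polynomial matching in the middle paragraph; everything else is integration by parts combined with the defining equations of $I_h^{\rm NC}$, $Q_{m-2,T}$ and $Q_{k-1,F}$.
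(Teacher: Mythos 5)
Your proof is correct and takes essentially the same route as the paper's: integration by parts on each element, followed by matching against the definitions of $\nabla_w v$ and $I_h^{\rm NC}v$. You additionally spell out the polynomial containments $\div V_{k-1,m-1}(T;\mathbb R^d)\subseteq\mathbb{P}_{m-2}(T)$ and $\boldsymbol{q}\cdot\boldsymbol{n}|_F\in\mathbb{P}_{k-1}(F)$, which the paper leaves implicit since these are standard facts about the BDM/RT shape-function spaces; the extra detail does no harm but is not a different argument.
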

\begin{proof}
From integration by parts and the definitions of $\nabla_wv$ and $I_h^{\rm NC}v$, 
\begin{align*}
(\nabla_wv-\nabla(I_h^{\rm NC}v), \boldsymbol{q})_T= (I_h^{\rm NC}v-v_0,\div\boldsymbol{q})_T-(I_h^{\rm NC}v-v_b, \boldsymbol{q}\cdot\boldsymbol{n})_{\partial T}=0
\end{align*}
for $\boldsymbol{q}\in V_{k-1,m-1}(T;\mathbb R^d)$ and $T\in\mathcal{T}_h$. So \eqref{eq:weakgradprop} is true.
\end{proof}

By combining \eqref{eq:weakgradprop}, the norm equivalence \eqref{ve-infsup2} and Lemma~\ref{lem:IhNCbijection}, we conclude that $\|\nabla_w v\|_0$ defines a norm on the space $M_{m-2, k-1}^{-1}$.

Employing Lemma~\ref{lem:IhNCbijection} and \eqref{eq:weakgradprop}, the mixed method \eqref{FSP-MVEM} can be recast as the following stablization-free weak Galerkin method:
find $\boldsymbol{\sigma}_h\in 	\Sigma_{r,k,m,h}^{\div}$ and $u_h\in M_{m-2, k-1}^{-1}$ such that
\begin{subequations}\label{MgradWG}
\begin{align}
\label{MgradWG1}
a(\boldsymbol{\sigma}_h,\boldsymbol{\tau}_h)+(\div\boldsymbol{\tau}_h,\nabla_wu_h) &= 0 \qquad\qquad\quad\;\;\;\;\,\! \forall\, \boldsymbol{\tau}_h\in  \Sigma_{r,k,m,h}^{\div}, \\
\label{MgradWG2}
(\div\boldsymbol{\sigma}_h,\nabla_wv_h)-(\nabla_wu_h,\nabla_wv_h)&= -\langle\!\langle f, I_h^{\rm NC}v_h\rangle\!\rangle \quad \forall\, v_h\in M_{m-2, k-1}^{-1}.
\end{align}
\end{subequations}

The weak Galerkin method \eqref{MgradWG} can be further hybridized by enforcing the normal continuity of the space $\Sigma_{r,k,m,h}^{\div}$ weakly through a Lagrangian multiplier.
To this end, 
define the weak Hessian $\nabla^2_w:M_{m-2, k-1,r}^{-1}\to \Sigma_{r,k,m}^{-1}$ as follows: for $v=(v_0, v_b, \boldsymbol{v}_g)\in M_{m-2, k-1,r}^{-1}$, let $\nabla^2_wv\in \Sigma_{r,k,m}^{-1}$ be determined by
\begin{align*}
(\nabla^2_wv, \boldsymbol{\tau})_T&=-(\nabla_w(v_0,v_b),\div\boldsymbol{\tau})_T+(\boldsymbol{v}_g, \boldsymbol{\tau}\boldsymbol{n})_{\partial T} \\
&=(v_0, \div\div\boldsymbol{\tau})_T-(v_b,\boldsymbol{n}^{\intercal}\div\boldsymbol{\tau})_T+(\boldsymbol{v}_g, \boldsymbol{\tau}\boldsymbol{n})_{\partial T}
\end{align*}
for $\boldsymbol{\tau}\in \Sigma_{r,k,m}(T;\mathbb{M})$ and $T\in\mathcal{T}_h$.

\begin{lemma}
It holds the norm equivalence
\begin{equation}\label{eq:weakHessiannormequiv}
\|\nabla^2_wv\|_0^2\eqsim |\nabla_w(v_0,v_b)|_{1,h}^2+\sum_{T\in\mathcal{T}_h}h_T^{-1}\|\nabla_w(v_0,v_b)-\boldsymbol{v}_g\|_{0,\partial T}^2
\end{equation}
for $v=(v_0, v_b, \boldsymbol{v}_g)\in M_{m-2, k-1,r}^{-1}$.
\end{lemma}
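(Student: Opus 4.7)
The first move is to rewrite the definition of $\nabla^2_wv$ using elementwise integration by parts on the volume term. Since $\nabla_w(v_0,v_b)\in V_{k-1,m-1}(T;\mathbb R^d)$ is a polynomial, we have
\begin{equation*}
-(\nabla_w(v_0,v_b),\div\boldsymbol{\tau})_T=(\grad(\nabla_w(v_0,v_b)),\boldsymbol{\tau})_T-(\nabla_w(v_0,v_b),\boldsymbol{\tau}\boldsymbol{n})_{\partial T}
\end{equation*}
for all $\boldsymbol{\tau}\in \Sigma_{r,k,m}(T;\mathbb{M})$, so that
\begin{equation*}
(\nabla^2_wv,\boldsymbol{\tau})_T=(\grad(\nabla_w(v_0,v_b)),\boldsymbol{\tau})_T-(\nabla_w(v_0,v_b)-\boldsymbol{v}_g,\boldsymbol{\tau}\boldsymbol{n})_{\partial T}.
\end{equation*}

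\textbf{Upper bound.} Setting $\boldsymbol{\tau}=\nabla^2_wv$ in the identity above, then applying Cauchy--Schwarz together with the inverse trace inequality $\|\boldsymbol{\tau}\boldsymbol{n}\|_{0,\partial T}\lesssim h_T^{-1/2}\|\boldsymbol{\tau}\|_{0,T}$ for polynomial tensors, one gets
\begin{equation*}
\|\nabla^2_wv\|_{0,T}\lesssim |\nabla_w(v_0,v_b)|_{1,T}+h_T^{-1/2}\|\nabla_w(v_0,v_b)-\boldsymbol{v}_g\|_{0,\partial T},
\end{equation*}
and summing the squares over $T\in\mathcal T_h$ gives one direction of \eqref{eq:weakHessiannormequiv}.

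\textbf{Lower bound.} The interesting direction is to produce a local test tensor $\boldsymbol{\tau}\in\Sigma_{r,k,m}(T;\mathbb M)$ that simultaneously activates both pieces of the right-hand side. By the unisolvence of the DoFs \eqref{div-M-dof} (Lemma~\ref{lem:divMunisol}), I prescribe $\boldsymbol{\tau}$ on each $T$ by
\begin{align*}
(\boldsymbol{\tau}\boldsymbol{n},\boldsymbol{q})_F &= -h_F^{-1}(\nabla_w(v_0,v_b)-\boldsymbol{v}_g,\boldsymbol{q})_F & &\forall\,\boldsymbol{q}\in\mathbb P_r(F;\mathbb R^d),\ F\in\mathcal F(T), \\
(\boldsymbol{\tau},\boldsymbol{q})_T &= (\grad(\nabla_w(v_0,v_b)),\boldsymbol{q})_T & &\forall\,\boldsymbol{q}\in\grad(V_{k-1,m-1}(T;\mathbb R^d)), \\
(\boldsymbol{\tau},\boldsymbol{q})_T &= 0 & &\forall\,\boldsymbol{q}\in \mathbb{R}^d\otimes(\mathbb{P}_{r-1}(T;\mathbb{R}^d)\cap\ker(\cdot\boldsymbol{x})).
\end{align*}
The sign on the face DoF is chosen so that the two terms in the identity add rather than cancel. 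Checking the polynomial orders in each of the three cases $(r,k,m)\in\{(k-1,k,k),(k,k,k),(k,k,k+1)\}$ confirms that $\nabla_w(v_0,v_b)|_F-\boldsymbol{v}_g|_F\in\mathbb P_r(F;\mathbb R^d)$, so the face prescription is legitimate, while $\grad(\nabla_w(v_0,v_b))$ lies in $\grad(V_{k-1,m-1}(T;\mathbb R^d))$ by construction. A standard scaling argument on the reference simplex then yields
\begin{equation*}
\|\boldsymbol{\tau}\|_{0,T}^2\lesssim |\nabla_w(v_0,v_b)|_{1,T}^2+h_T^{-1}\|\nabla_w(v_0,v_b)-\boldsymbol{v}_g\|_{0,\partial T}^2.
\end{equation*}
Substituting this $\boldsymbol{\tau}$ into the identity gives
\begin{equation*}
|\nabla_w(v_0,v_b)|_{1,T}^2+h_T^{-1}\|\nabla_w(v_0,v_b)-\boldsymbol{v}_g\|_{0,\partial T}^2=(\nabla^2_wv,\boldsymbol{\tau})_T\leq \|\nabla^2_wv\|_{0,T}\|\boldsymbol{\tau}\|_{0,T},
\end{equation*}
and dividing by $\|\boldsymbol{\tau}\|_{0,T}$ and squaring, then summing over $T$, closes the estimate.

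\textbf{Main obstacle.} The one step that needs genuine care is the local construction of $\boldsymbol{\tau}$: one must verify that the three groups of data actually live in the ranges dictated by the DoFs \eqref{div-M-dof} in every admissible triple $(r,k,m)$, and then derive the scaling estimate $\|\boldsymbol{\tau}\|_{0,T}^2\lesssim|\nabla_w(v_0,v_b)|_{1,T}^2+h_T^{-1}\|\nabla_w(v_0,v_b)-\boldsymbol{v}_g\|_{0,\partial T}^2$. Everything else is Cauchy--Schwarz, integration by parts, and the inverse trace inequality on polynomials.
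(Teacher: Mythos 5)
Your proof is correct and follows essentially the same route as the paper's: rewrite the defining identity for $\nabla^2_w v$ via integration by parts into a volume term $(\grad(\nabla_w(v_0,v_b)),\boldsymbol{\tau})_T$ plus a boundary term, take $\boldsymbol{\tau}=\nabla^2_wv$ with a discrete trace inequality for the upper bound, and build a local test tensor $\boldsymbol{\tau}$ from the DoFs of $\Sigma_{r,k,m}(T;\mathbb M)$ that reproduces both pieces of the right-hand side for the lower bound. Your sign convention $-h_F^{-1}(\nabla_w(v_0,v_b)-\boldsymbol{v}_g)$ is the same as the paper's $h_T^{-1}(\boldsymbol{v}_g-\boldsymbol{w})$, and your check that $(\nabla_w(v_0,v_b)-\boldsymbol{v}_g)|_F\in\mathbb P_r(F;\mathbb R^d)$ across all admissible $(r,k,m)$ is exactly the point that makes the face prescription legitimate.
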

\begin{proof}
Set $\boldsymbol{w}=\nabla_w(v_0,v_b)\in V_{k-1,m-1}^{-1}$ for simplicity.
Apply integration by parts to get
\begin{equation}\label{eq:20250107}
(\nabla^2_wv, \boldsymbol{\tau})_T=(\grad\boldsymbol{w}, \boldsymbol{\tau})_T+(\boldsymbol{v}_g-\boldsymbol{w}, \boldsymbol{\tau}\boldsymbol{n})_{\partial T}\quad\forall\,\boldsymbol{\tau}\in \Sigma_{r,k,m}(T;\mathbb{M}), T\in\mathcal{T}_h.
\end{equation}
By choosing $\boldsymbol{\tau}=(\nabla^2_wv)|_T$ in \eqref{eq:20250107}, we have
\begin{equation*}
\|\nabla^2_wv\|_{0,T}^2\leq |\boldsymbol{w}|_{1,T}\|\nabla^2_wv\|_{0,T} + \|\boldsymbol{w}-\boldsymbol{v}_g\|_{0,\partial T}\|\nabla^2_wv\|_{0,\partial T}.
\end{equation*}
On the other hand, by the trace inequality and the inverse inequality,
$$
\|\nabla^2_wv\|_{0,\partial T}\lesssim h_T^{-1/2}\|\nabla^2_wv\|_{0,T} + h_T^{1/2}|\nabla^2_wv|_{1,T}\lesssim h_T^{-1/2}\|\nabla^2_wv\|_{0,T},
$$
hence it follows that
\begin{equation}\label{eq:202501071}
\|\nabla^2_wv\|_{0,T}\lesssim |\boldsymbol{w}|_{1,T}+h_T^{-1/2}\|\boldsymbol{w}-\boldsymbol{v}_g\|_{0,\partial T}.
\end{equation}

For the other side, let $\boldsymbol{\tau}\in\Sigma_{r,k,m}(T;\mathbb{M})$ such that all the DoFs \eqref{div-M-dof} vanish except 
\begin{align*}
(\boldsymbol{\tau}\boldsymbol{n}, \boldsymbol{q})_F&=h_T^{-1}(\boldsymbol{v}_g-\boldsymbol{w}, \boldsymbol{q})_F \quad\quad \forall\, \boldsymbol{q}\in\mathbb{P}_{r}(F;\mathbb{R}^d), F\in\mathcal F(T), \\
(\boldsymbol{\tau},\boldsymbol{q})_T &= (\grad\boldsymbol{w},\boldsymbol{q})_T \quad\qquad\;\;\; \forall\, \boldsymbol{q}\in\grad(V_{k-1,m-1}(T;\mathbb R^d)).
\end{align*}
By a scaling argument and \eqref{eq:20250107}, we have
\begin{align*}
\|\boldsymbol{\tau}\|_{0,T}&\lesssim |\boldsymbol{w}|_{1,T} + h_T^{-1/2}\|\boldsymbol{w}-\boldsymbol{v}_g\|_{0,\partial T}, \\
(\nabla^2_wv, \boldsymbol{\tau})_T&=|\boldsymbol{w}|_{1,T}^2 + h_T^{-1}\|\boldsymbol{w}-\boldsymbol{v}_g\|_{0,\partial T}^2.
\end{align*}
Hence, 
\begin{align*}
|\boldsymbol{w}|_{1,T}^2 + h_T^{-1}\|\boldsymbol{w}-\boldsymbol{v}_g\|_{0,\partial T}^2&=(\nabla^2_wv, \boldsymbol{\tau})_T\leq \|\nabla^2_wv\|_{0,T}\|\boldsymbol{\tau}\|_{0,T} \\
&\lesssim \|\nabla^2_wv\|_{0,T}(|\boldsymbol{w}|_{1,T} + h_T^{-1/2}\|\boldsymbol{w}-\boldsymbol{v}_g\|_{0,\partial T}),
\end{align*}
which yields 
\begin{equation*}
|\boldsymbol{w}|_{1,T} + h_T^{-1/2}\|\boldsymbol{w}-\boldsymbol{v}_g\|_{0,\partial T}\lesssim \|\nabla^2_wv\|_{0,T}.
\end{equation*}
This together with \eqref{eq:202501071} implies \eqref{eq:weakHessiannormequiv}.
\end{proof}

An immediate consequence of the norm equivalence \eqref{eq:weakHessiannormequiv} is that $\|\nabla_w^2 v\|_0$ defines a norm on the space $M_{m-2, k-1, r}^{-1}$, and $\nabla^2_w:M_{m-2, k-1,r}^{-1}\to \Sigma_{r,k,m}^{-1}$ is injective.

Now we propose a fully weak Galerkin method for the fourth-order elliptic singular perturbation problem~\eqref{FSP1}:
find $u_h\in M_{m-2, k-1,r}^{-1}$ such that
\begin{equation}\label{MhessWG}
\varepsilon^2(\nabla^2_wu_h, \nabla^2_wv_h)+(\nabla_w(u_0,u_b),\nabla_w(v_0,v_b))= \langle\!\langle f, I_h^{\rm NC}(v_0,v_b)\rangle\!\rangle
\end{equation}
for any $v_h\in M_{m-2, k-1,r}^{-1}$.
The weak Galerkin method \eqref{MhessWG} is well-proposed.

\begin{theorem}
Let $u_h=(u_0,u_b,\boldsymbol{u}_g)\in M_{m-2, k-1,r}^{-1}$ be the solution of the weak Galerkin method~\eqref{MhessWG}.
Set $\boldsymbol{\sigma}_h=\varepsilon^2\nabla^2_wu_h$. Then $\boldsymbol{\sigma}_h\in \Sigma_{r,k,m,h}^{\div}$ and $(u_0,u_b)\in M_{m-2, k-1}^{-1}$ satisfy the weak Galerkin method~\eqref{MgradWG}.
Consequently, the weak Galerkin method~\eqref{MhessWG} is equivalent to the mixed method \eqref{FSP-MVEM}.
\end{theorem}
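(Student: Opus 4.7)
The plan is to verify two things: (i) $\boldsymbol{\sigma}_h := \varepsilon^2 \nabla_w^2 u_h$, a priori only in the broken space $\Sigma_{r,k,m}^{-1}$, is in fact $H(\div)$-conforming and thus belongs to $\Sigma_{r,k,m,h}^{\div}$; and (ii) the pair $(\boldsymbol{\sigma}_h, (u_0, u_b))$ satisfies both equations of \eqref{MgradWG}. Equivalence with the mixed method \eqref{FSP-MVEM} is then immediate from the bijection $I_h^{\rm NC}: M_{m-2,k-1}^{-1}\to \mathring{V}_{k,m,h}^{\rm VE}$ given in Lemma~\ref{lem:IhNCbijection} combined with the identity \eqref{eq:weakgradprop} and the uniqueness of solutions. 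The central observation driving both steps is that, since $\boldsymbol{\sigma}_h|_T = \varepsilon^2\nabla_w^2 u_h|_T \in \Sigma_{r,k,m}(T;\mathbb M)$ by construction, we may insert $\boldsymbol{\tau} = \boldsymbol{\sigma}_h|_T$ into the defining relation of $\nabla_w^2 v_h$ to obtain
$$
\varepsilon^2(\nabla_w^2 u_h, \nabla_w^2 v_h)_T = -(\nabla_w(v_0, v_b), \div \boldsymbol{\sigma}_h)_T + (\boldsymbol{v}_g, \boldsymbol{\sigma}_h \boldsymbol{n})_{\partial T}
$$
for every $v_h = (v_0, v_b, \boldsymbol{v}_g) \in M_{m-2, k-1, r}^{-1}$ and every $T\in\mathcal T_h$.

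For step (i), I would test \eqref{MhessWG} with $v_h = (0, 0, \boldsymbol{v}_g)$ for arbitrary $\boldsymbol{v}_g \in \mathbb{P}_r(\mathring{\mathcal{F}}_h; \mathbb{R}^d)$: since $\nabla_w(0,0) = 0$ and $I_h^{\rm NC}(0,0) = 0$, equation \eqref{MhessWG} collapses to $\varepsilon^2(\nabla_w^2 u_h, \nabla_w^2 v_h) = 0$. Summing the displayed identity over $T$ and regrouping element-boundary terms by faces produces $\sum_{F\in\mathring{\mathcal{F}}_h}(\boldsymbol{v}_g, [\![\boldsymbol{\sigma}_h \boldsymbol{n}]\!])_F = 0$. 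Since $[\![\boldsymbol{\sigma}_h \boldsymbol{n}]\!]|_F \in \mathbb{P}_r(F; \mathbb{R}^d)$ by the face property of $\Sigma_{r,k,m}(T;\mathbb M)$ recorded in Section~\ref{sec:divfem} and $\boldsymbol{v}_g$ ranges over the whole space $\mathbb P_r(F;\mathbb R^d)$ on each interior face, the jump must vanish on every interior face, which gives $\boldsymbol{\sigma}_h \in \Sigma_{r,k,m,h}^{\div}$.

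For step (ii), equation \eqref{MgradWG1} follows by applying the defining relation of $\nabla_w^2 u_h$ to an arbitrary $\boldsymbol{\tau}_h \in \Sigma_{r,k,m,h}^{\div}$ on each element and summing: the face contributions $\sum_T(\boldsymbol{u}_g, \boldsymbol{\tau}_h \boldsymbol{n})_{\partial T}$ vanish by the normal continuity of $\boldsymbol{\tau}_h$ on interior faces together with $\boldsymbol{u}_g \equiv 0$ on boundary faces. For \eqref{MgradWG2}, I would test \eqref{MhessWG} with $v_h = (v_0, v_b, 0)$; the displayed identity yields $\varepsilon^2(\nabla_w^2 u_h, \nabla_w^2 v_h) = -(\nabla_w(v_0, v_b), \div \boldsymbol{\sigma}_h)$ because the face term drops when $\boldsymbol{v}_g = 0$, and substituting back into \eqref{MhessWG}, rearranging signs, and using $\div_h\boldsymbol{\sigma}_h = \div\boldsymbol{\sigma}_h$ (from step (i)) recovers \eqref{MgradWG2} exactly. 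The only technical subtlety is in step (i): one needs $[\![\boldsymbol{\sigma}_h \boldsymbol{n}]\!]|_F$ to lie \emph{exactly} in $\mathbb{P}_r(F; \mathbb{R}^d)$ so that $L^2$-orthogonality against every $\boldsymbol{v}_g$ in that space forces the jump to vanish, but this degree match is built directly into the shape function space $\Sigma_{r,k,m}(T;\mathbb M)$, so no real obstacle arises.
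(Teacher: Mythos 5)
Your proof is correct and takes essentially the same route as the paper: test \eqref{MhessWG} with $v_h=(0,0,\boldsymbol{v}_g)$ to force normal continuity of $\boldsymbol{\sigma}_h$, use the defining identity of $\nabla^2_w$ against $\boldsymbol{\tau}_h\in\Sigma_{r,k,m,h}^{\div}$ to recover \eqref{MgradWG1}, and test with $v_h=(v_0,v_b,0)$ to recover \eqref{MgradWG2}. You spell out the jump/regrouping argument behind the first step more explicitly than the paper does (the paper compresses it into a single sentence), but the underlying mechanism is identical.
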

\begin{proof}
By choosing $v_h=(0,0,\boldsymbol{v}_g)\in M_{m-2, k-1,r}^{-1}$ in \eqref{MhessWG}, we get $\boldsymbol{\sigma}_h\in \Sigma_{r,k,m,h}^{\div}$. By the definition of $\boldsymbol{\sigma}_h$,
\begin{equation*}
a(\boldsymbol{\sigma}_h,\boldsymbol{\tau}_h)=(\nabla^2_wu_h, \boldsymbol{\tau}_h)=-(\nabla_w(u_0,u_b), \div\boldsymbol{\tau}_h)\quad\forall\,\boldsymbol{\tau}_h\in \Sigma_{r,k,m,h}^{\div}.
\end{equation*}
This is exactly \eqref{MgradWG1}.

For $v_h=(v_0,v_b,0)\in M_{m-2, k-1,r}^{-1}$, by the definition of $\nabla^2_wv_h$,
\begin{equation*}
(\boldsymbol{\sigma}_h, \nabla^2_wv_h)=-(\div\boldsymbol{\sigma}_h, \nabla_w(v_0,v_b)).
\end{equation*}
Now taking $v_h=(v_0,v_b,0)\in M_{m-2, k-1,r}^{-1}$ in \eqref{MhessWG} will induce \eqref{MgradWG2}. 
\end{proof}

\subsection{Mixed finite element method}
Based on the mixed formulation \eqref{FSP1st-mix}, we propose the following mixed finite element method: find $(\boldsymbol{\sigma}_h,\boldsymbol{\phi}_h,\boldsymbol{p}_h, u_h)\in \Sigma_{r,k,m,h}^{\div}\times V_{k-1,m-1,h}^{\div}\times V_{k-1,m-1}^{-1}\times \mathbb P_{m-2}(\mathcal{T}_h)$ such that
\begin{subequations}\label{FSP1st-mfem}
\begin{align}
\label{FSP1st-mfem1}
\varepsilon^{-2}(\boldsymbol{\sigma}_h,\boldsymbol{\tau}_h)+\bar{b}(\boldsymbol{\tau}_h,\boldsymbol{\psi}_h;\boldsymbol{p}_h, u_h) &= 0, \\
\label{FSP1st-mfem2}
\bar{b}(\boldsymbol{\sigma}_h,\boldsymbol{\phi}_h;\boldsymbol{q}_h, v_h)-(\boldsymbol{p}_h,\boldsymbol{q}_h) &= -(f,v_h)
\end{align}
for $\boldsymbol{\tau}_h\in \Sigma_{r,k,m,h}^{\div}$, $\boldsymbol{\psi}_h\in V_{k-1,m-1,h}^{\div}$, $\boldsymbol{q}_h\in V_{k-1,m-1}^{-1}$ and $v_h\in \mathbb P_{m-2}(\mathcal{T}_h)$.
\end{subequations}

\begin{lemma}
The mixed finite element method \eqref{FSP1st-mfem} is well-posed.
\end{lemma}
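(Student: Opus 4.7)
The plan is to imitate the well-posedness argument already given for the continuous formulation \eqref{FSP1st-mix} and verify the two conditions of the Zulehner theory \cite[Theorem 2.6]{zulehner2011} at the discrete level, with constants independent of $h$ and $\varepsilon$. The key structural inputs are the range identities $\div \Sigma_{r,k,m}(T;\mathbb{M}) = V_{k-1,m-1}(T;\mathbb R^d)$ and $\div V_{k-1,m-1}(T;\mathbb R^d)=\mathbb P_{m-2}(T)$, which ensure that the discrete spaces are matched up in exactly the way the continuous argument exploits.

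For the first Zulehner condition, I would establish that
\begin{equation*}
\varepsilon^{-2}\|\boldsymbol{\tau}_h\|_0^2+\sup_{(\boldsymbol{q}_h,v_h)\in V_{k-1,m-1}^{-1}\times\mathbb P_{m-2}(\mathcal T_h)}\frac{\bar b^2(\boldsymbol{\tau}_h,\boldsymbol{\psi}_h;\boldsymbol{q}_h,v_h)}{\|\boldsymbol{q}_h\|_0^2+\|v_h\|_0^2}\eqsim \|(\boldsymbol{\tau}_h,\boldsymbol{\psi}_h)\|_{\varepsilon^{-1},\div}^2
\end{equation*}
for all $(\boldsymbol{\tau}_h,\boldsymbol{\psi}_h)\in\Sigma_{r,k,m,h}^{\div}\times V_{k-1,m-1,h}^{\div}$. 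The upper bound is Cauchy--Schwarz. For the lower bound, test with $\boldsymbol{q}_h=\div\boldsymbol{\tau}_h-\boldsymbol{\psi}_h$ and $v_h=-\div\boldsymbol{\psi}_h$; by the range identities these lie in the discrete test spaces, and one computes directly $\bar b(\boldsymbol{\tau}_h,\boldsymbol{\psi}_h;\boldsymbol{q}_h,v_h)=\|\boldsymbol{q}_h\|_0^2+\|v_h\|_0^2=\|\div\boldsymbol{\tau}_h-\boldsymbol{\psi}_h\|_0^2+\|\div\boldsymbol{\psi}_h\|_0^2$, giving the desired discrete norm.

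For the second Zulehner condition, I would prove
\begin{equation*}
\|\boldsymbol{q}_h\|_0^2+\|v_h\|_0^2\eqsim \|\boldsymbol{q}_h\|_0^2+\beta_h^2,\qquad \beta_h:=\sup_{(\boldsymbol{\tau}_h,\boldsymbol{\psi}_h)}\frac{\bar b(\boldsymbol{\tau}_h,\boldsymbol{\psi}_h;\boldsymbol{q}_h,v_h)}{\|(\boldsymbol{\tau}_h,\boldsymbol{\psi}_h)\|_{\varepsilon^{-1},\div}},
\end{equation*}
paralleling the continuous proof. The essential ingredient is the discrete surjectivity $\div V_{k-1,m-1,h}^{\div}=\mathbb P_{m-2}(\mathcal T_h)$ with an $L^2$-bounded right inverse, which is classical for BDM/RT elements (for instance via the Fortin-type operator satisfying \eqref{BDMint1}). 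Given $v_h\in\mathbb P_{m-2}(\mathcal T_h)$, pick $\boldsymbol{\psi}_h\in V_{k-1,m-1,h}^{\div}$ with $\div\boldsymbol{\psi}_h=v_h$ and $\|\boldsymbol{\psi}_h\|_0\lesssim \|v_h\|_0$; testing with $(\boldsymbol 0,\boldsymbol{\psi}_h)$ gives $\|v_h\|_0^2=-\bar b(\boldsymbol 0,\boldsymbol{\psi}_h;\boldsymbol{q}_h,v_h)-(\boldsymbol{\psi}_h,\boldsymbol{q}_h)$, hence $\|v_h\|_0\lesssim \beta_h+\|\boldsymbol{q}_h\|_0$ after bounding $\|(\boldsymbol 0,\boldsymbol{\psi}_h)\|_{\varepsilon^{-1},\div}\lesssim \|v_h\|_0$.

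With both Zulehner inequalities in hand, well-posedness of \eqref{FSP1st-mfem} follows immediately. The main (and essentially only) obstacle is arranging the bounded discrete right inverse of $\div$ on $V_{k-1,m-1,h}^{\div}$ uniformly in $h$ and $\varepsilon$; however this is standard for BDM/RT elements, so the argument reduces to a clean transcription of the continuous proof.
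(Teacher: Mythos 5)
Your proposal is correct, but it takes a considerably heavier route than the paper does. The paper simply observes that \eqref{FSP1st-mfem} is a square, finite-dimensional linear system, so well-posedness is equivalent to uniqueness of the homogeneous solution: setting $\boldsymbol{\tau}_h=\boldsymbol{\sigma}_h$, $\boldsymbol{\psi}_h=\boldsymbol{\phi}_h$, $\boldsymbol{q}_h=\boldsymbol{p}_h$, $v_h=u_h$ and subtracting the two equations gives $\boldsymbol{\sigma}_h=0$, $\boldsymbol{p}_h=0$; then $u_h=0$ follows from \eqref{FSP1st-mfem1} and the surjectivity $\div V_{k-1,m-1,h}^{\div}=\mathbb P_{m-2}(\mathcal T_h)$, and $\boldsymbol{\phi}_h=0$ from \eqref{FSP1st-mfem2} and $V_{k-1,m-1,h}^{\div}\subseteq V_{k-1,m-1}^{-1}$. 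That is the entire argument; no stability estimate is asserted or needed for the lemma as stated.

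Your Zulehner-theory argument is a valid alternative, and the discrete inclusions $\div\Sigma_{r,k,m}(T;\mathbb{M})=V_{k-1,m-1}(T;\mathbb R^d)$ and $\div V_{k-1,m-1}(T;\mathbb R^d)=\mathbb P_{m-2}(T)$ do make the first Zulehner condition an exact transcription of the continuous one (your test choice $\boldsymbol{q}_h=\div\boldsymbol{\tau}_h-\boldsymbol{\psi}_h$, $v_h=-\div\boldsymbol{\psi}_h$ lies in the discrete test space). The second condition needs the $L^2$-bounded discrete right inverse of $\div$ on $V_{k-1,m-1,h}^{\div}$, which is indeed standard via the commuting interpolation $I_h^{\div}$ and \eqref{BDMint1}. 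What your approach buys, beyond what the lemma claims, is a uniform-in-$(h,\varepsilon)$ discrete stability estimate in the norm $\|(\cdot,\cdot)\|_{\varepsilon^{-1},\div}\times(L^2\times L^2)$ — a stronger quantitative statement. What it costs is the extra machinery (the Fortin/commuting-operator construction of the right inverse), which the paper's uniqueness argument sidesteps entirely. If you only want well-posedness for each fixed $h$, the paper's two-line linear-algebra proof is the cleaner choice; if you want a robust a priori estimate for \eqref{FSP1st-mfem} on its own terms, your route is the one that delivers it.
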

\begin{proof}
It suffices to prove that the mixed method \eqref{FSP1st-mfem} admits only the zero solution when $ f = 0 $. By selecting $\boldsymbol{\tau}_h = \boldsymbol{\sigma}_h$, $\boldsymbol{\psi}_h = \boldsymbol{\phi}_h$, $\boldsymbol{q}_h = \boldsymbol{p}_h$, and $v_h = u_h$ in \eqref{FSP1st-mfem}, subtracting \eqref{FSP1st-mfem2} from \eqref{FSP1st-mfem1} leads to $\boldsymbol{\sigma}_h = 0$ and $\boldsymbol{p}_h = 0$.
Then $u_h=0$ follows from \eqref{FSP1st-mfem1} and $\div V_{k-1,m-1,h}^{\div}=\mathbb P_{m-2}(\mathcal{T}_h)$, and $\boldsymbol{\phi}_h=0$ follows from \eqref{FSP1st-mfem2} and $V_{k-1,m-1,h}^{\div}\subseteq V_{k-1,m-1}^{-1}$.
\end{proof}

\begin{theorem}
Let $(\boldsymbol{\sigma}_h,u_h)\in \Sigma_{r,k,m,h}^{\div}\times \mathring{V}_{k,m,h}^{\rm VE}$ with $k\geq1$ satisfy
\begin{subequations}\label{FSP-MVEMnew}
\begin{align}
\label{FSP-MVEMnew1}
a(\boldsymbol{\sigma}_h,\boldsymbol{\tau}_h)+b_h(\boldsymbol{\tau}_h,u_h) &= 0 \qquad\qquad\qquad\;\;\,\;\, \forall\, \boldsymbol{\tau}_h\in  \Sigma_{r,k,m,h}^{\div}, \\
\label{FSP-MVEMnew2}
b_h(\boldsymbol{\sigma}_h,v_h)-c_h(u_h,v_h)&= -(f, Q_{m-2,h}v_h) \quad \forall\, v_h\in \mathring{V}_{k,m,h}^{\rm VE},
\end{align}
\end{subequations} 
where the bilinear forms $b_h(\cdot,\cdot)$ and $c_h(\cdot,\cdot)$ are defined in \eqref{bhch}.
Then $(\boldsymbol{\sigma}_h,\div\boldsymbol{\sigma}_h-Q_h^{\rm div}\nabla_h u_h,Q_h^{\rm div}\nabla_h u_h, Q_{m-2,h}u_h)\in \Sigma_{r,k,m,h}^{\div}\times V_{k-1,m-1,h}^{\div}\times V_{k-1,m-1}^{-1}\times \mathbb P_{m-2}(\mathcal{T}_h)$ is the solution of the mixed finite element method \eqref{FSP1st-mfem}.
Moreover, the mixed finite element method \eqref{FSP1st-mfem} is equivalent to the mixed method \eqref{FSP-MVEM} for $k\geq2$.
\end{theorem}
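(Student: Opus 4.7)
The plan is to verify by direct substitution that the proposed tuple $(\boldsymbol{\sigma}_h, \boldsymbol{\phi}_h, \boldsymbol{p}_h, u_h^*)$ with $\boldsymbol{\phi}_h := \div\boldsymbol{\sigma}_h - Q_h^{\rm div}\nabla_h u_h$, $\boldsymbol{p}_h := Q_h^{\rm div}\nabla_h u_h$ and $u_h^* := Q_{m-2,h}u_h$ solves \eqref{FSP1st-mfem}, and then to invoke uniqueness of both formulations, together with the bijectivity of $I_h^{\rm NC}$, to obtain the equivalence for $k\ge 2$. Space membership is immediate for $\boldsymbol{p}_h$ and $u_h^*$; for $\boldsymbol{\phi}_h$, the piecewise inclusion into $V_{k-1,m-1}(T;\mathbb R^d)$ follows from $\div\Sigma_{r,k,m}(T;\mathbb{M}) = V_{k-1,m-1}(T;\mathbb R^d)$ and the range of $Q_h^{\rm div}$, while global $H(\div)$-conformity is exactly Theorem~\ref{thm:FSP-MVEM}.

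To verify \eqref{FSP1st-mfem1}, the key step is to establish the identity $(\div\boldsymbol{\psi}_h, u_h^*) = -(\boldsymbol{\psi}_h, \boldsymbol{p}_h)$ for every $\boldsymbol{\psi}_h \in V_{k-1,m-1,h}^{\div}$. I would prove it in three short moves: (i) $\div\boldsymbol{\psi}_h \in \mathbb P_{m-2}(\mathcal T_h)$, so $(\div\boldsymbol{\psi}_h, u_h^*) = (\div\boldsymbol{\psi}_h, u_h)$; (ii) element-wise integration by parts converts this into $-(\boldsymbol{\psi}_h, \nabla_h u_h)$, the interface contributions vanishing because $\boldsymbol{\psi}_h\cdot\boldsymbol{n}|_F\in\mathbb P_{k-1}(F)$ is single-valued (BDM/RT normal-trace property plus $\boldsymbol{\psi}_h\in H(\div)$) and $u_h$ obeys the weak continuity \eqref{weak-c}; (iii) since $\boldsymbol{\psi}_h$ already lies in the range of $Q_h^{\rm div}$, $(\boldsymbol{\psi}_h,\nabla_h u_h) = (\boldsymbol{\psi}_h, Q_h^{\rm div}\nabla_h u_h) = (\boldsymbol{\psi}_h, \boldsymbol{p}_h)$. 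Substituting this identity collapses $\bar b(\boldsymbol{\tau}_h,\boldsymbol{\psi}_h;\boldsymbol{p}_h,u_h^*)$ to $(\div\boldsymbol{\tau}_h, \boldsymbol{p}_h) = b_h(\boldsymbol{\tau}_h, u_h)$, and \eqref{FSP-MVEMnew1} yields \eqref{FSP1st-mfem1}.

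To verify \eqref{FSP1st-mfem2}, the defining relation $\div\boldsymbol{\sigma}_h - \boldsymbol{\phi}_h = \boldsymbol{p}_h$ cancels the first and third terms of the left-hand side, leaving $-(\div\boldsymbol{\phi}_h, w_h)$ with $w_h\in\mathbb P_{m-2}(\mathcal T_h)$. Starting from \eqref{FSP-MVEMnew2}, rewritten as $(\boldsymbol{\phi}_h, Q_h^{\rm div}\nabla_h v_h) = -(f, Q_{m-2,h}v_h)$ for every $v_h\in\mathring{V}_{k,m,h}^{\rm VE}$, and absorbing the outer $Q_h^{\rm div}$ (because $\boldsymbol{\phi}_h$ lies in its range), I obtain $(\boldsymbol{\phi}_h, \nabla_h v_h) = -(f, Q_{m-2,h}v_h)$. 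A second integration by parts, again killing the interface terms via the $H(\div)$-conformity of $\boldsymbol{\phi}_h$ and the weak continuity of $v_h$, together with $\div\boldsymbol{\phi}_h\in\mathbb P_{m-2}(\mathcal T_h)$, yields $(\div\boldsymbol{\phi}_h, Q_{m-2,h}v_h) = (f, Q_{m-2,h}v_h)$. Surjectivity of $Q_{m-2,h}:\mathring{V}_{k,m,h}^{\rm VE}\to\mathbb P_{m-2}(\mathcal T_h)$ (the element DoFs \eqref{ve-dof2} are independent of the face DoFs \eqref{ve-dof1}) then upgrades this to $(\div\boldsymbol{\phi}_h, w_h) = (f, w_h)$ for every $w_h\in\mathbb P_{m-2}(\mathcal T_h)$, as required.

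For the equivalence when $k\ge 2$, both \eqref{FSP-MVEM} and \eqref{FSP1st-mfem} are uniquely solvable, so the forward map above transports the unique solution of \eqref{FSP-MVEM} to that of \eqref{FSP1st-mfem}. The reverse reconstruction of $u_h$ uses the bijection $I_h^{\rm NC}: M_{m-2,k-1}^{-1}\to\mathring{V}_{k,m,h}^{\rm VE}$: one takes the element component to be $u_h^*$ and reads the face component $u_b$ off from $\boldsymbol{p}_h = \nabla_w(u_h^*, u_b)$, whose defining identity determines every moment $(u_b, \boldsymbol{q}\cdot\boldsymbol{n})_{\partial T}$ locally by testing with $\boldsymbol{q}\in V_{k-1,m-1}(T;\mathbb R^d)$. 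The main obstacle is well-definedness of $u_b$ on each interior face $F$: the two local determinations of $u_b|_F$ from the elements sharing $F$ must agree, and this consistency is precisely what \eqref{FSP1st-mfem1} with $\boldsymbol{\tau}_h=0$, tested against an $H(\div)$-conforming $\boldsymbol{\psi}_h$ whose normal trace is supported on $F$, enforces.
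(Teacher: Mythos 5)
Your verification of \eqref{FSP1st-mfem1} and \eqref{FSP1st-mfem2} is correct and essentially identical in substance to the paper's argument: element-wise integration by parts, the weak continuity \eqref{weak-c} to kill the interface terms, the fact that $\div\boldsymbol{\sigma}_h$, $\boldsymbol{\phi}_h$, and $\boldsymbol{\psi}_h$ all lie in the range of $Q_h^{\rm div}$ (so the projector can be absorbed), $\div\boldsymbol{\phi}_h, \div\boldsymbol{\psi}_h\in\mathbb P_{m-2}(\mathcal T_h)$ (so $u_h$ can replace $Q_{m-2,h}u_h$ under the integral), the $H(\div)$-conformity of $\boldsymbol{\phi}_h$ from Theorem~\ref{thm:FSP-MVEM}, and the surjectivity of $Q_{m-2,h}$ on $\mathring{V}_{k,m,h}^{\rm VE}$. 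The paper performs exactly these manipulations, just written out more compactly.

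Your treatment of the equivalence for $k\ge 2$, however, has a gap and is also more complicated than needed. First, the forward map you constructed starts from a solution of \eqref{FSP-MVEMnew} (right-hand side $(f,Q_{m-2,h}v_h)$), not from a solution of \eqref{FSP-MVEM} (right-hand side $\langle\!\langle f,v_h\rangle\!\rangle$). The crucial observation — and the entire reason the restriction $k\ge 2$ appears in the theorem — is that for $k\ge 2$ the paper defines $\langle\!\langle f,v_h\rangle\!\rangle := (f,Q_{m-2,h}v_h)$, so \eqref{FSP-MVEMnew} and \eqref{FSP-MVEM} are literally the same system. Your proposal never states this, so as written it does not actually link \eqref{FSP-MVEM} to \eqref{FSP1st-mfem}. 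Second, once this coincidence is noted, no explicit reverse reconstruction is required: both \eqref{FSP-MVEM} and \eqref{FSP1st-mfem} are uniquely solvable, the forward map sends the unique solution of the former to a solution (hence the solution) of the latter, and this already establishes the correspondence. Your proposed inversion of the weak gradient via $I_h^{\rm NC}$ and the face-by-face well-definedness check is substantially more delicate than what you supply — in particular, you would first have to show that $(\boldsymbol{p}_h,\boldsymbol{q})_T + (u_h^*,\div\boldsymbol{q})_T$ depends only on the normal trace of $\boldsymbol{q}$, which requires testing \eqref{FSP1st-mfem1} against element-bubble $\boldsymbol{\psi}_h$ — and none of it is needed.
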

\begin{proof}
Since the mixed methods \eqref{FSP-MVEMnew} and \eqref{FSP-MVEM} differ only in the right-hand side, the mixed method \eqref{FSP-MVEMnew} is well-posed. Following the proof of Theorem~\ref{thm:FSP-MVEM}, we have $\div\boldsymbol{\sigma}_h-Q_h^{\rm div}\nabla_h u_h\in H(\div,\Omega)$ for $k\geq1$.

For $\boldsymbol{\tau}_h\in \Sigma_{r,k,m,h}^{\div}$ and $\boldsymbol{\psi}_h\in V_{k-1,m-1,h}^{\div}$,
we get from integration by parts and~\eqref{FSP-MVEMnew1} that
\begin{align*}
&\quad\; \varepsilon^{-2}(\boldsymbol{\sigma}_h,\boldsymbol{\tau}_h)+\bar{b}(\boldsymbol{\tau}_h,\boldsymbol{\psi}_h;Q_h^{\rm div}\nabla_h u_h, Q_{m-2,h}u_h) \\
&= \varepsilon^{-2}(\boldsymbol{\sigma}_h,\boldsymbol{\tau}_h)+(\div\boldsymbol{\tau}_h-\boldsymbol{\psi}_h, \nabla_h u_h)-(\div\boldsymbol{\psi}_h, u_h) \\
&= \varepsilon^{-2}(\boldsymbol{\sigma}_h,\boldsymbol{\tau}_h)+(\div\boldsymbol{\tau}_h, \nabla_h u_h)=0.
\end{align*}
So \eqref{FSP1st-mfem1} is true.
For $\boldsymbol{q}_h\in V_{k-1,m-1}^{-1}$ and $v_h\in \mathring{V}_{k,m,h}^{\rm VE}$, applying integration by parts again, we have from \eqref{FSP-MVEMnew2} and the fact $\div\boldsymbol{\sigma}_h-Q_h^{\rm div}\nabla_h u_h\in H(\div,\Omega)$ that
\begin{align*}
&\quad \bar{b}(\boldsymbol{\sigma}_h,\div\boldsymbol{\sigma}_h-Q_h^{\rm div}\nabla_h u_h;\boldsymbol{q}_h, Q_{m-2,h}v_h)-(Q_h^{\rm div}\nabla_h u_h,\boldsymbol{q}_h) \\
&=-(\div(\div\boldsymbol{\sigma}_h-Q_h^{\rm div}\nabla_h u_h), v_h) \\
&=(\div\boldsymbol{\sigma}_h-Q_h^{\rm div}\nabla_h u_h, \nabla_hv_h) = -(f, Q_{m-2,h}v_h).
\end{align*}
Thus, we conclude \eqref{FSP1st-mfem2} from the fact $Q_{m-2,h}\mathring{V}_{k,m,h}^{\rm VE}=\mathbb P_{m-2}(\mathcal{T}_h)$.

Finally, the equivalence between the mixed finite element method \eqref{FSP1st-mfem} and the mixed method \eqref{FSP-MVEM} for $k\geq 2$ follows from the fact that the mixed methods \eqref{FSP-MVEMnew} and \eqref{FSP-MVEM} coincide exactly for $k\geq 2$.
\end{proof}

\section{Numerical Results}\label{sec6}
In this section, we will numerically examine the performance of the mixed method~\eqref{FSP-MVEM}.
Let $\Omega$ be the unit square $(0, 1)^2$.
All the numerical tests are performed on the uniform triangulation.

\begin{example}\label{example1}
\normalfont
We first test the discrete method \eqref{FSP-MVEM} with the exact solution
$$u = \sin^2(\pi x)\sin^2(\pi y).$$
The right-hand side $f$ is computed from \eqref{FSP0}. Notice that the solution $u$ does not have boundary layers.
	
We measure the numerical error
\begin{align*}
\textrm{Err}_1 :=& \varepsilon^{-1}\|\boldsymbol{\sigma}-\boldsymbol{\sigma}_h\|_{0}+(\varepsilon^2{\interleave u-u_h\interleave}^2_{2,h}+\|\nabla u-Q_h^{\rm div}\nabla_hu_h\|^2_{0})^{1/2} 
\end{align*}
with $r = m = k$.
The numerical error $\textrm{Err}_1$ with different $\varepsilon$ , $h$ and $k$ is shown in Table~\ref{ex1-k123}.
We observe from Table~\ref{ex1-k123} that $\textrm{Err}_1\eqsim O(h^{k})$ for $\varepsilon = 1, 10^{-1}, 10^{-5}, 10^{-6}$, which is optimal and consistent with \eqref{errresult1}.
	
\begin{table}[ht]
	\centering
	\caption{$\rm Err_1$ of the discrete method \eqref{FSP-MVEM} for Example \ref{example1} with $r=m=k$.}
	\label{ex1-k123}
	\begin{tabular}{cccc|cc|cc|cc}
		\toprule
		\multirow{2}{*}{$k$}& \multirow{2}{*}{$h$} & \multicolumn{8}{c}{$\varepsilon$} \\
		\cmidrule(lr){3-10}
		& & $1$ & rate & $10^{-1}$ & rate & $10^{-5}$ & rate & $10^{-6}$ & rate\\
		\midrule
		\multirow{5}{*}{1} 
		& $1/16$ & 4.233e-01 &  & 2.354e-01 &  & 2.611e-01 & & 2.610e-01 & \\
		& $1/32$ & 1.552e-01 & 1.45 & 1.104e-01 & 1.09 & 1.311e-01 & 0.99 & 1.309e-01 & 1.00 \\
		& $1/64$ & 6.418e-02 & 1.27 & 5.331e-02 & 1.05 &  6.567e-02 & 1.00 & 6.553e-02 & 1.00 \\
		& $1/128$ & 2.885e-02 & 1.15 & 2.618e-02 & 1.03 &  3.292e-02 & 1.00 & 3.278e-02 & 1.00 \\
		& $1/256$ & 1.363e-02 & 1.08 & 1.297e-02 & 1.01 &  1.654e-02 & 0.99 & 1.640e-02 & 1.00\\
		\midrule
		\multirow{5}{*}{2} 
		& $1/8$ & 1.453e+00 &   & 1.974e-01 &   & 1.124e-01 & & 1.124e-01 & \\
		& $1/16$ & 3.761e-01 & 1.95 & 5.101e-02 & 1.95  & 2.912e-02 & 1.95 & 2.911e-02 & 1.95 \\
		& $1/32$ & 9.487e-02 & 1.99 & 1.287e-02 & 1.99 & 7.360e-03 & 1.98 & 7.354e-03 & 1.98\\
		& $1/64$ & 2.377e-02 & 2.00 & 3.224e-03 & 2.00 & 1.848e-03 & 1.99 & 1.845e-03 & 1.99\\
		& $1/128$ & 5.952e-03 & 2.00 & 8.065e-04 & 2.00 & 4.633e-04 & 2.00 & 4.618e-04 & 2.00\\
		\midrule
		\multirow{5}{*}{3} 
		& $1/4$ & 5.079e-01 &   & 8.470e-02 &   & 5.902e-02 & & 5.900e-02 & \\
		& $1/8$ & 3.717e-02 & 3.77  & 8.278e-03 & 3.36 & 7.926e-03 & 2.90 & 7.922e-03 & 2.90 \\
		& $1/16$ & 2.540e-03 & 3.87  & 8.392e-04 & 3.30 & 1.017e-03 & 2.96 & 1.016e-03 & 2.96 \\
		& $1/32$ & 1.860e-04 & 3.77  & 9.284e-05 & 3.18 & 1.285e-04 & 2.99 & 1.282e-04 & 2.99 \\
		& $1/64$ & 1.597e-05 & 3.54  & 1.089e-05 & 3.09 & 1.615e-05 & 2.99 & 1.609e-05 & 2.99\\
		\bottomrule
	\end{tabular}
\end{table}
\end{example}

\begin{example}\label{example2}
\normalfont
This example is designed to verify the estimate \eqref{errresult2}; therefore, only the cases $\varepsilon = 1, 10^{-1}$ are considered.
Take the same $u$ as in Example \ref{example1}. 
By \eqref{ve-infsup2} and \eqref{exchange}, it can be seen that
\begin{equation*}
	\|\nabla_h(I^{\rm VE}_hu)-\nabla_h u_h\|_0\eqsim \|Q_h^{\rm div}(\nabla_h(I^{\rm VE}_hu)-\nabla_h u_h)\|_0 = \|Q_h^{\rm div}(\nabla u-\nabla_h u_h)\|_{0}.
\end{equation*}
Then we compute the numerical errors
\begin{align*}
	\textrm{Err}_u :=& (\varepsilon^2{\interleave u-u_h\interleave}^2_{2,h}+\|Q_h^{\rm div}(\nabla u-\nabla_h u_h)\|^2_{0})^{1/2}\eqsim{\interleave I^{\rm VE}_hu-u_h\interleave}_{\varepsilon,h},\\
	\textrm{Err}_{\boldsymbol{\sigma}}:=&\varepsilon^{-1}\|\boldsymbol{\sigma}-\boldsymbol{\sigma}_h\|_0.
\end{align*}
As observed from Tables \ref{ex2-k123}-\ref{ex2-k2}, in the cases of $r=m=k=1,3$ and $r=k=2, m=3$, $\textrm{Err}_u\eqsim O(h^{k+1})$ demonstrates superconvergence, while $\textrm{Err}_{\boldsymbol{\sigma}}\eqsim O(h^{k+1})$ exhibits optimal convergence. These results are consistent with the estimate \eqref{errresult2}. However, for $r=m=k=2$, it can be seen from Table \ref{ex2-k123} that $\textrm{Err}_u\eqsim O(h^{2})$ is optimal, whereas $\textrm{Err}_{\boldsymbol{\sigma}}\eqsim O(h^{2})$ is only suboptimal. This indicates that, in this case, $\textrm{Err}_u$ and $\textrm{Err}_{\boldsymbol{\sigma}}$ satisfy \eqref{errresult1} but do not satisfy \eqref{errresult2}.
\begin{table}[h]
	\centering
	\caption{The performance of the discrete method \eqref{FSP-MVEM} for Example \ref{example2} with $r=m=k$.}
	\label{ex2-k123}
	\begin{tabular}{cccc|cc|cc|cc}
		\toprule
		\multirow{2}{*}{$k$}& \multirow{2}{*}{$h$} & \multicolumn{4}{c}{$\varepsilon=1$} & \multicolumn{4}{c}{$\varepsilon=10^{-1}$} \\
		\cmidrule(lr){3-6}\cmidrule(lr){7-10}
		& & $ \rm Err_{\boldsymbol{\sigma}} $ & rate & $\rm Err_u$ & rate & $ \rm Err_{\boldsymbol{\sigma}} $ & rate & $\rm Err_u$ & rate\\
		\midrule
		\multirow{5}{*}{1} 
		& $1/16$ & 1.959e-01 & & 9.855e-02 & & 2.989e-02 & & 1.570e-02 & \\
		& $1/32$ & 4.937e-02 & 1.99 & 2.551e-02 & 1.95 & 7.587e-03 & 1.98 & 3.988e-03 & 1.98 \\
		& $1/64$ & 1.238e-02 & 2.00 & 6.454e-03 & 1.98 & 1.907e-03 & 1.99 & 1.004e-03 & 1.99\\
		& $1/128$ & 3.100e-03 & 2.00 & 1.621e-03 & 2.00 & 4.778e-04 & 2.00 & 2.520e-04 & 1.99\\
		& $1/256$ & 7.756e-04 & 2.00 & 4.059e-04 & 2.00 & 1.196e-04 & 2.00 & 6.311e-05 & 2.00\\
		\midrule
		\multirow{5}{*}{2} 
		& $1/8$ & 7.374e-01 & & 7.141e-01 & & 7.256e-02 & & 1.137e-01 & \\
		& $1/16$ & 1.881e-01 & 1.97 & 1.876e-01 & 1.93 & 1.852e-02 & 1.97 & 2.977e-02 & 1.93 \\
		& $1/32$ & 4.724e-02 & 1.99 & 4.752e-02 & 1.98 & 4.654e-03 & 1.99 & 7.535e-03 & 1.98\\
		& $1/64$ & 1.182e-02 & 2.00 & 1.192e-02 & 2.00 & 1.165e-03 & 2.00 & 1.890e-03 & 2.00\\
		& $1/128$ & 2.960e-03 & 2.00 & 2.985e-03 & 2.00 & 2.913e-04 & 2.00 & 4.729e-04 & 2.00\\
		\midrule
		\multirow{5}{*}{3} 
		& $1/4$ & 2.517e-01 & & 2.533e-01 & & 3.024e-02 & & 3.820e-02 & \\
		& $1/8$ & 1.827e-02 & 3.78 & 1.819e-02 & 3.80 & 2.404e-03 & 3.65 & 2.857e-03 & 3.74\\
		& $1/16$ & 1.193e-03 & 3.94 & 1.180e-03 & 3.95 & 1.618e-04 & 3.89 & 1.879e-04 & 3.93 \\
		& $1/32$ & 7.552e-05 & 3.98 & 7.448e-05 & 3.99 & 1.034e-05 & 3.97 & 1.192e-05 & 3.98 \\
		& $1/64$ & 4.739e-06 & 3.99 & 4.669e-06 & 4.00 & 6.509e-07 & 3.99 & 7.482e-07 & 3.99 \\
		\bottomrule
	\end{tabular}
\end{table}

\begin{table}[h]
	\centering
	\caption{The performance of the discrete method \eqref{FSP-MVEM} for Example \ref{example2} with $r=k=2$ and $m=3$.}
	\label{ex2-k2}
	\begin{tabular}{ccc|cc|cc|cc}
		\toprule
		\multirow{2}{*}{$h$} & \multicolumn{4}{c}{$\varepsilon=1$} & \multicolumn{4}{c}{$\varepsilon=10^{-1}$} \\
		\cmidrule(lr){2-5}\cmidrule(lr){6-9}
		& $ \rm Err_{\boldsymbol{\sigma}} $ & rate & $\rm Err_u$ & rate & $ \rm Err_{\boldsymbol{\sigma}} $ & rate & $\rm Err_u$ & rate\\
		\midrule
		$1/8$ & 5.020e-02 & & 3.641e-02 & & 8.236e-03 & & 6.950e-03 & \\
		$1/16$ & 6.541e-03 & 2.94 & 4.967e-03 & 2.87 & 1.138e-03 & 2.86 & 9.884e-04 & 2.81\\
		$1/32$ & 8.389e-04 & 2.96 & 6.603e-04 & 2.91 & 1.486e-04 & 2.94 & 1.326e-04 & 2.90\\
		$1/64$ & 1.061e-04 & 2.98 & 8.488e-05 & 2.96 & 1.892e-05 & 2.97 & 1.708e-05 & 2.96\\
		$1/128$ & 1.334e-05 & 2.99 & 1.074e-05 & 2.98 & 2.383e-06 & 2.99 & 2.162e-06 & 2.98\\
		\bottomrule
	\end{tabular}
\end{table}
\end{example}

\begin{example}\label{example3}
\normalfont
Next we verify the convergence of the discrete method \eqref{FSP-MVEM} with boundary layers.
The exact solution of the Poisson equation \eqref{poisson} is set to be $$\bar{u} = \sin(\pi x)\sin(\pi y).$$
We take the right-hand side term $f$ computed from \eqref{poisson} as the right-hand side function of problem \eqref{FSP0}. The explicit expression solution $u$ for problem \eqref{FSP0} with this right-hand term is unknown.
The solution $u$ possesses strong boundary layers when $\varepsilon$ is very small. Take $\varepsilon = 10^{-6}, 10^{-8}$, $10^{-10}$.
We measure the numerical error
\begin{equation*}
\textrm{Err}_3 = \varepsilon|Q_h^{\rm div}\nabla_h(\bar{u}-u_h)|_{1,h}+\|\nabla \bar{u}-Q_h^{\rm div}\nabla_h u_h\|_{0}
\end{equation*}
with $r=m=k$.
The numerical error $\textrm{Err}_3$ with different $\varepsilon$ , $h$ and $k$ is presented in Table~\ref{ex3-k123}, from which we can see that $\textrm{Err}_3\eqsim O(h^{k})$.
These convergence rates are robust and optimal, consistent with \eqref{errresult4}.

\begin{table}[ht]
	\centering
	\caption{$\rm Err_3$ of the discrete method \eqref{FSP-MVEM} for Example \ref{example3} with $r=m=k$.}
	\label{ex3-k123}
	\begin{tabular}{cccc|cc|cc}
		\toprule
		\multirow{2}{*}{$k$}& \multirow{2}{*}{$h$} & \multicolumn{6}{c}{$\varepsilon$} \\
		\cmidrule(lr){3-8}
		& & $10^{-6}$ & rate & $10^{-8}$ & rate & $10^{-10}$ & rate \\
		\midrule
		\multirow{5}{*}{1} 
		& $1/16$ & 1.624e-01 & & 1.624e-01 &  & 1.624e-01 & \\
		& $1/32$ & 8.125e-02 & 1.00 & 8.125e-02 & 1.00 & 8.125e-02 & 1.00 \\
		& $1/64$ & 4.064e-02 & 1.00 & 4.064e-02 & 1.00 & 4.064e-02 & 1.00 \\
		& $1/128$ & 2.032e-02 & 1.00 & 2.032e-02 & 1.00 & 2.032e-02 & 1.00 \\
		& $1/256$ & 1.016e-02 & 1.00 & 1.016e-02 & 1.00 & 1.016e-02 & 1.00 \\
		\midrule
		\multirow{5}{*}{2} 
		& $1/8$ & 4.780e-02 & & 4.780e-02 & & 4.780e-02 &\\
		& $1/16$ & 1.208e-02 & 1.98 & 1.208e-02 & 1.98 & 1.208e-02 & 1.98 \\
		& $1/32$ & 3.029e-03 & 2.00 & 3.029e-03 & 2.00 & 3.029e-03 & 2.00 \\
		& $1/64$ & 7.581e-04 & 2.00 & 7.580e-04 & 2.00 & 7.580e-04 & 2.00 \\
		& $1/128$ & 1.896e-04 & 2.00 & 1.896e-04 & 2.00 & 1.896e-04 & 2.00 \\
		\midrule
		\multirow{5}{*}{3} 
		& $1/4$ & 1.465e-02 & & 1.465e-02 & & 1.465e-02 & \\
		& $1/8$ & 1.882e-03 & 2.96 & 1.882e-03 & 2.96 & 1.882e-03 & 2.96\\
		& $1/16$ & 2.374e-04 & 2.99 & 2.374e-04 & 2.99 & 2.374e-04 & 2.99\\
		& $1/32$ & 2.977e-05 & 3.00 & 2.977e-05 & 3.00 & 2.977e-05 & 3.00\\
		& $1/64$ & 3.729e-06 & 3.00 & 3.726e-06 & 3.00 & 3.726e-06 & 3.00\\
		\bottomrule
	\end{tabular}
\end{table}
\end{example}
\begin{example}
\normalfont
In this example, we show the boundary layer phenomenon in the numerical solution using figures. We adopt the same right-hand side term $f$ as in Example \ref{example3}, and $h=\frac{1}{256}$. Although the explicit expression solution $u$ for problem \eqref{FSP0} with this right-hand term remains unknown, $\varepsilon^{-2}\boldsymbol{\sigma}_h$ provides an accurate approximation for $\nabla^2u$. To visually demonstrate the boundary layer phenomenon, we examine the spatial variation in the Frobenius norm of $\varepsilon^{-2}\boldsymbol{\sigma}_h$. Our observation focuses on the simplest case, where $r=0, k=m=1$. A series of figures is presented, showing the Frobenius norm of $\varepsilon^{-2}\boldsymbol{\sigma}_h$ with different $\varepsilon$. As seen in Fig. \ref{six_images}, the boundary layer phenomenon becomes apparent when $\varepsilon=10^{-2}$, and it becomes increasingly pronounced as $\varepsilon$ decreases.  
\begin{figure}[htbp]
\centering

\begin{subfigure}[b]{0.3\textwidth}
\centering
\includegraphics[width=\textwidth, trim={3.8cm 9cm 4cm 9cm}, clip]{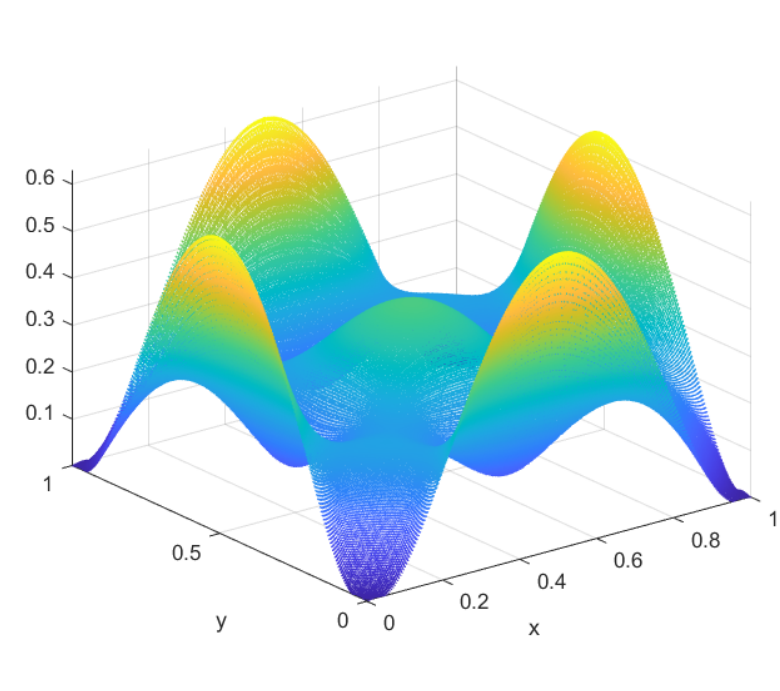}
\caption{$\varepsilon=1$}
\label{image1}
\end{subfigure}
\hfill
\begin{subfigure}[b]{0.3\textwidth}
\centering
\includegraphics[width=\textwidth, trim={3.8cm 9cm 4cm 9cm}, clip]{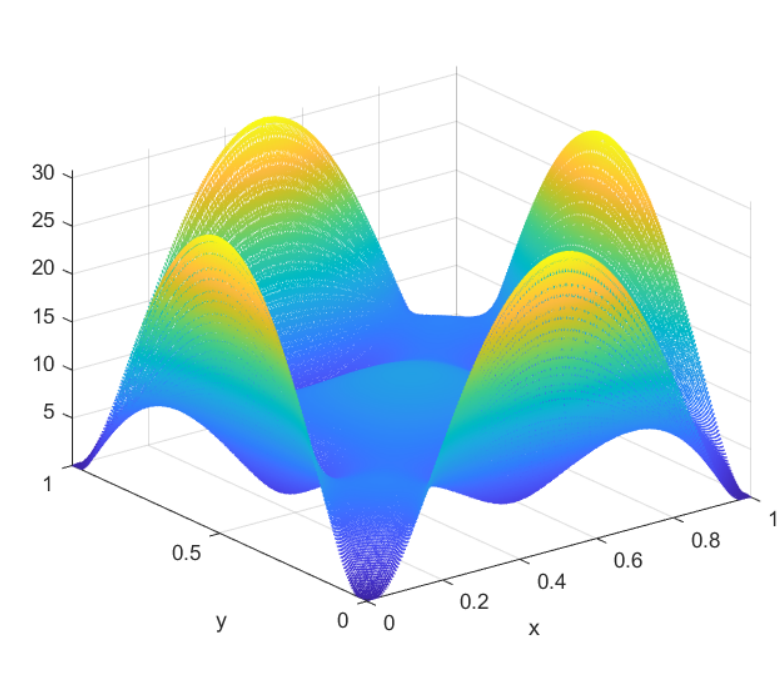}
\caption{$\varepsilon=10^{-1}$}
\label{image2}
\end{subfigure}
\hfill
\begin{subfigure}[b]{0.3\textwidth}
\centering
\includegraphics[width=\textwidth, trim={3.8cm 9cm 4cm 9cm}, clip]{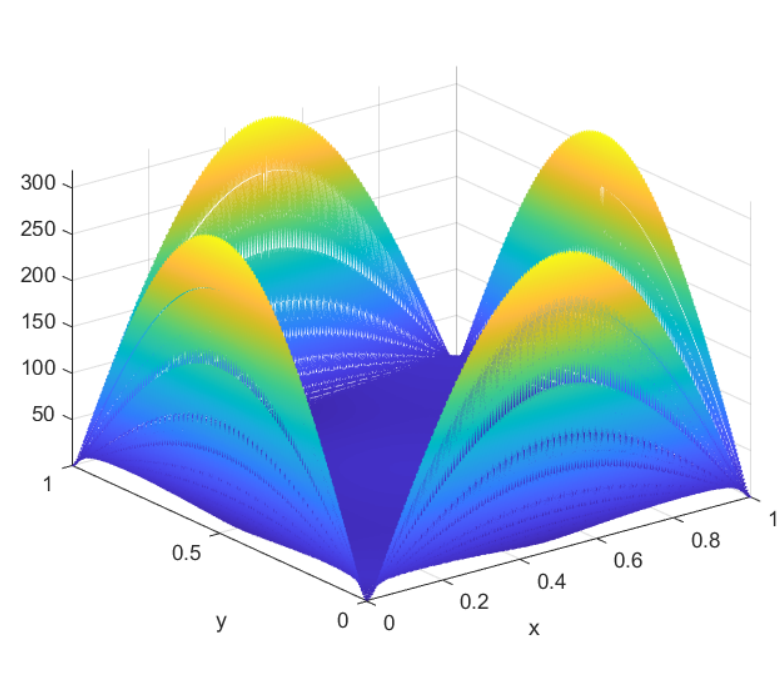} 
\caption{$\varepsilon=10^{-2}$}
\label{image3}
\end{subfigure}
\begin{subfigure}[b]{0.3\textwidth}
\centering
\includegraphics[width=\textwidth, trim={3.8cm 9cm 4cm 9cm}, clip]{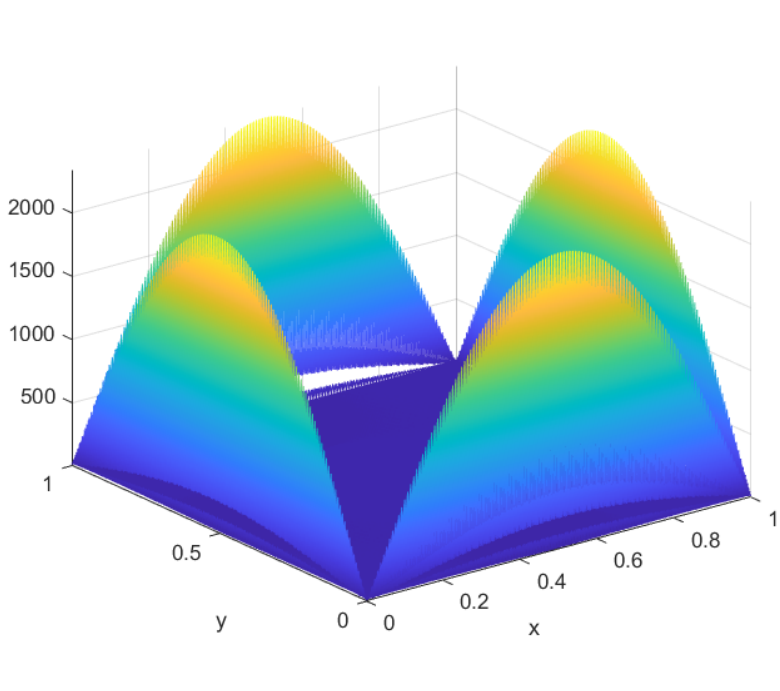}
\caption{$\varepsilon=10^{-3}$}
\label{image4}
\end{subfigure}
\hfill
\begin{subfigure}[b]{0.3\textwidth}
\centering
\includegraphics[width=\textwidth, trim={3.8cm 9cm 4cm 9cm}, clip]{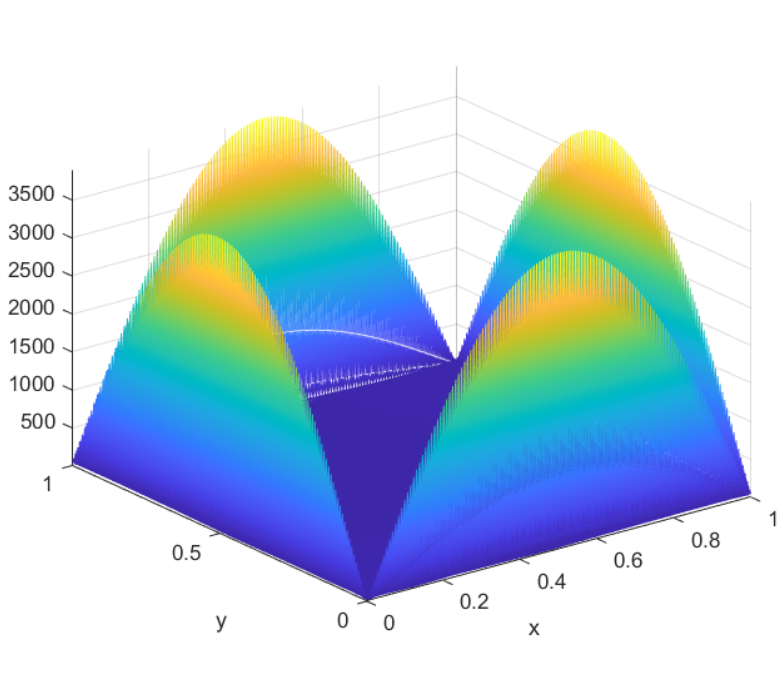}
\caption{$\varepsilon=10^{-4}$}
\label{image5}
\end{subfigure}
\hfill
\begin{subfigure}[b]{0.3\textwidth}
\centering
\includegraphics[width=\textwidth, trim={3.8cm 9cm 4cm 9cm}, clip]{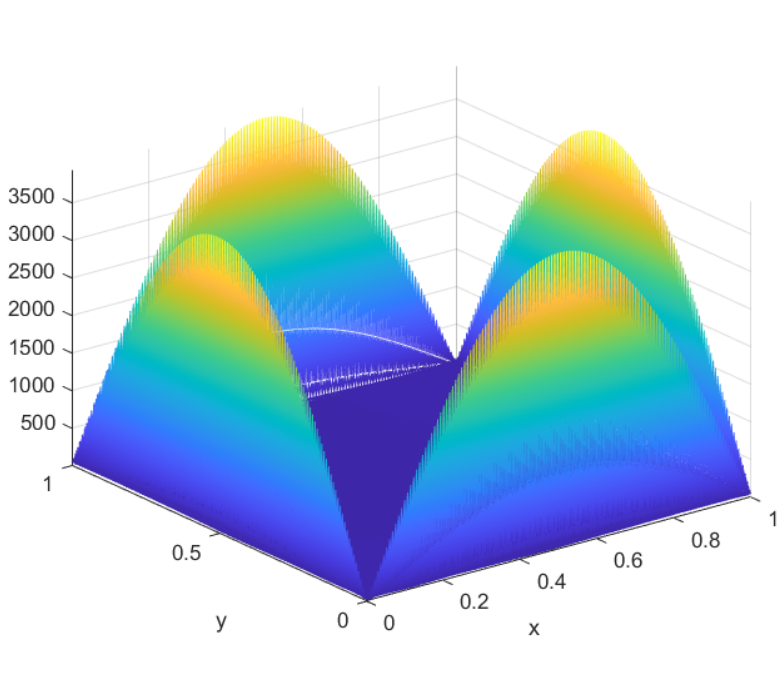}
\caption{$\varepsilon=10^{-5}$}
\label{image6}
\end{subfigure}

\caption{The Frobenius norm of $\varepsilon^{-2}\boldsymbol{\sigma}_h$ with different $\varepsilon$.}
\label{six_images}
\end{figure}
\end{example}

\bibliographystyle{abbrv} 
\bibliography{ref} 
\end{document}